\setlist[itemize]{leftmargin=1cm}
\setlist[enumerate]{leftmargin=1cm}
\renewcommand\subsubsection{\@secnumfont}{\bfseries}%
\renewcommand\subsubsection{\@startsection{subsubsection}{3}
	\z@{.5\linespacing\@plus.7\linespacing}{-.5em}%
	{\normalfont\bfseries}}
\crefname{algorithm}{Algorithm}{Algorithm}
\Crefname{algorithm}{Algorithm}{Algorithm}
\crefname{definition}{Definition}{Definition}
\Crefname{definition}{Definition}{Definition}
\crefname{assumption}{Assumption}{Assumption}
\Crefname{assumption}{Assumption}{Assumption}
\crefname{section}{Section}{Section}
\Crefname{section}{Section}{Section}
\crefname{lemma}{Lemma}{Lemma}
\Crefname{lemma}{Lemma}{Lemma}
\crefname{enumi}{}{}
\newtheorem{theorem}{Theorem}[section]
\newtheorem{lemma}[theorem]{Lemma}
\newtheorem{proposition}[theorem]{Proposition}
\theoremstyle{definition}
\newtheorem{definition}[theorem]{Definition}
\newtheorem{example}[theorem]{Example}
\newtheorem{assumption}[theorem]{Assumption}
\newtheorem{heuristic}[theorem]{Heuristic}
\theoremstyle{remark}
\newtheorem{remark}[theorem]{Remark}
\numberwithin{equation}{section}
\newcommand{\bx}{\boldsymbol{x}}
\newcommand{\ba}{\boldsymbol{a}}
\newcommand{\bw}{\boldsymbol{w}}
\newcommand{\bu}{\boldsymbol{u}}
\newcommand{\bv}{\boldsymbol{v}}
\newcommand{\bz}{\boldsymbol{z}}
\newcommand{\bA}{\boldsymbol{A}}
\newcommand{\bC}{\boldsymbol{C}}
\newcommand{\cA}{\mathcal{A}}
\newcommand{\cH}{\mathcal{H}}
\newcommand{\cM}{\mathcal{M}}
\newcommand{\cC}{\mathcal{C}}
\newcommand{\cG}{\mathcal{G}}
\newcommand{\cL}{\mathcal{L}}
\newcommand{\cE}{\mathcal{E}}
\newcommand{\cN}{\mathcal{N}}
\newcommand{\N}{\mathbb{N}}
\newcommand{\R}{\mathbb{R}}
\newcommand{\real}{\mathbb{R}}
\newcommand{\one}{\mathbbm{1}}
\newcommand{\refallprop}{\eqref{prop:splitting}--\eqref{prop:averaged}}
\DeclareMathOperator{\Null}{Null}
\DeclareMathOperator{\prox}{prox}
\DeclareMathOperator{\Ran}{Ran}
\DeclareMathOperator{\zer}{Zer}
\DeclareMathOperator{\dom}{dom}
\DeclareMathOperator{\Fix}{Fix}
\DeclareMathOperator{\Tr}{Tr}
\DeclareMathOperator{\diag}{diag}
\DeclareMathOperator{\gra}{gra}
\DeclareMathOperator{\slt}{slt}
\DeclareMathOperator{\Var}{Var}
\DeclareMathOperator{\kron}{\otimes}
\DeclareMathOperator{\Id}{Id}
\begin{document}
	\centerline{}
	
	\centerline{}
	
	\title[Splitting the Forward-Backward Algorithm]{Splitting the Forward-backward Algorithm: A Full Characterization}
	
	\author[A. Åkerman \and E. Chenchene \and P. Giselsson \and E. Naldi]{A. Åkerman$^{*}$ \and  E. Chenchene$^{\dagger, c}$ \and  P. Giselsson$^*$ \and E. Naldi$^\ddagger$}

    \address{$^{*}$ Department of Automatic Control
Lund University, Lund, Sweden.}
	\email{\textcolor[rgb]{0.00,0.00,0.84}{\{anton.akerman, pontus.giselsson\}@control.lth.se}}
    
	\address{$^{\dagger}$ Faculty of Mathematics, University of Vienna, Austria.}
	\email{\textcolor[rgb]{0.00,0.00,0.84}{enis.chenchene@univie.ac.at}}
	
	\address{$^{\ddagger}$  Malga Center, DIMA, Università degli Studi di Genova, Genova, Italy.}
	\email{\textcolor[rgb]{0.00,0.00,0.84}{emanuele.naldi@edu.unige.it}}
	
	
        \subjclass{47N10, 47H05, 47H09, 65K10, 90C25}

	\keywords{Frugal Resolvent Splitting, Convex Analysis, Forward-Backward, Splitting Methods, Douglas--Rachford.}
	
	\date{\today
		\newline \indent $^{c}$ Corresponding author}
	
	\begin{abstract}
		 We study frugal splitting algorithms with minimal lifting for solving monotone inclusion problems involving sums of maximal monotone and cocoercive operators. Building on a foundational result by Ryu, we fully characterize all methods that use only individual resolvent evaluations, direct evaluations of cocoercive operators, and minimal memory resources while ensuring convergence via averaged fixed-point iterations. We show that all such methods are captured by a unified framework, which includes known schemes and enables new ones with promising features. Systematic numerical experiments lead us to propose three design heuristics to achieve excellent performances in practice, yielding significant gains over existing methods.
	\end{abstract} \maketitle

\setcounter{tocdepth}{2}

\section{Introduction}

Splitting algorithms lie at the hearth of nonsmooth optimization. They define a class of iterative method that allow to solve highly structured problems by decomposing them into simpler components, each handled through efficient and easily computable operations. In this paper, we consider monotone inclusion problems of the form:
\begin{equation}\label{eq:Inclusion}	
\text{Find} \ x \in \cH \ \text{such that:} \quad 0 \in \sum_{i=1}^nA_i(x) + \sum_{i=1}^mC_i(x),
\end{equation}
where $\cH$ is a real Hilbert space, each operator $A_i:\cH\rightarrow 2^\cH$ is maximal monotone, and $C_i: \cH\rightarrow \cH$ is $\frac{1}{\beta_i}$-cocoercive for some $\beta_i\in \real_{+}$. If the resolvent $J_{\gamma A}:=(\Id + \gamma A)^{-1}$ with $A:=A_1+\dots + A_n$ can be computed efficiently for any given step size $\gamma >0$, one can address \eqref{eq:Inclusion} through the celebrated \emph{forward-backward} algorithm \cite{lm79} that, from a starting point $x^0 \in \cH$, iterates:
\begin{equation}\label{eq:intro_forward_backward}
	x^{k+1} = J_{\gamma A}(x^k - \gamma C(x^k)), \quad \text{for all} \ k \in \N,
\end{equation}
where $C:=C_1+\dots + C_m$. The sequence generated by \eqref{eq:intro_forward_backward} is well known to converge weakly to a solution to \eqref{eq:Inclusion}, provided that one exists and $\gamma < \frac{2}{\beta}$. However, in practice the operator $J_{\gamma A}$ can rarely be computed efficiently. The classical alternative if $n=2$ and $m=0$ is the \emph{Douglas--Rachford} splitting (DRS) algorithm \cite{lm79} and if $m=1$, the so-called \emph{three-operator splitting}, or \emph{Davis--Yin} method \cite{dy17, rfp13}, that, given $z^0 \in \cH$, iterates:
\begin{equation}\label{eq:intro_david_yin}
	\left\{\begin{aligned}
		&x_1^{k+1} = J_{\gamma A_1}(z^k),\\
		&x_2^{k+1} = J_{\gamma A_2}(2x_1^{k+1} - \gamma C(x_1^{k+1}) - z^k),\\
		&z^{k+1} = z^k + (x_2^{k+1} - x_1^{k+1}),
	\end{aligned}\right. \quad \text{for all} \ k \in \N.
\end{equation}
The method reduces to DRS if $C=0$, and to \eqref{eq:intro_forward_backward} if $A_1=0$ and, in particular, is such that $\{x_1^{k+1}\}_k$ and $\{x_2^{k+1}\}_k$ converge weakly to the same solution to \eqref{eq:Inclusion} provided that $\gamma < \frac{2}{\beta}$ \cite{dy17}. 

In the general case, one possible approach to tackle \eqref{eq:Inclusion} is using a classical lifting trick \cite{pierra84}, i.e., applying \eqref{eq:intro_forward_backward} to a suitable product-space reformulation of \eqref{eq:Inclusion}, see, e.g., \cite{rfp13}. As these are formulated on product spaces, they usually feature $n$, or, in some specific instances \cite{ckch23, campoy22}, $n-1$ variables instead of only one as in \eqref{eq:intro_forward_backward} and \eqref{eq:intro_david_yin}. More recently, a number of new splitting algorithms with $n-1$ variables have been discovered, without using reformulations on product spaces but proximal-point-based designs. Notable examples include the graph-DRS algorithmic framework \cite{bcn24} for $m=0$, or the graph-forward-backward algorithm \cite{acl24} for $m=n-1$.

The need to introduce $n-1$ (or more) variables is not a coincidence. A breakthrough result by Ryu \cite{Ryu}, later extended in \cite{mt23, mbg22}, demonstrated that \emph{frugal splitting methods}---those relying solely on individual resolvent evaluations $J_{\gamma_1 A_1}, \dots, J_{\gamma_n A_n}$ with $\gamma_1, \dots, \gamma_n > 0$, forward evaluations $C_1,\dots, C_m$, and simple algebraic operations---cannot solve arbitrary instances of \eqref{eq:Inclusion} by storing fewer than $n-1$ variables between iterations. Consequently, methods achieving this bound are said to exhibit \emph{minimal lifting}. Even more striking, if $n = 2$ and $m=0$, the only frugal resolvent method with minimal lifting that is so-called \emph{unconditionally stable} (i.e., producing a weakly converging sequence to a solution for any starting point and any operator choice) is indeed DRS \cite[Corollary 1]{Ryu}. For $n > 2$, however, the landscape of such methods becomes richer, giving rise to a variety of structurally diverse algorithms that have attracted significant attention in recent years \cite{mt23, mbg22, abt23, amtt23, campoy22}. These can all be understood as fixed-point iterations with respect to \emph{averaged} operators, which  lead us to the core question of this paper:\smallskip
\begin{center}
	\emph{Can we characterize all averaged frugal splitting methods with minimal lifting to solve \eqref{eq:Inclusion}?}
\end{center}\smallskip
To be more precise, we consider fixed-point iterations with respect to $T: \cH^{n-1}\rightarrow \cH^{n-1}$ to solve arbitrary instances of \eqref{eq:Inclusion}, with the following properties:
    \begin{enumerate}[label=(P\arabic*),ref=P\arabic*]
    \item \label{prop:splitting}
    \emph{Splitting}: Each evaluation of $T$ is constructed solely from resolvent evaluations of \(A_i\), direct evaluations of \(C_i\), and arbitrary linear combinations of their inputs and outputs. 
    \item \label{prop:frugal} \emph{Frugality}: Each operator is evaluated exactly once per evaluation of $T$, either directly for \(C_i\) or through a resolvent for \(A_i\).
    \item \label{prop:minlift} \emph{Minimal lifting}: The method only needs to store $n-1$ variables between iterations. This property is automatically satisfied for $T$ defined on $\cH^{n-1}$.
    \item \label{prop:FPE} \emph{Fixed-point encoding}: Fixed points of $T$ correspond to solutions of \eqref{eq:Inclusion} and vice versa, as made precise in Section~\ref{sec:Conditions_for_fixed-point-encoding}.  \item \label{prop:averaged} \emph{Averaged nonexpansive}: The operator $T$ is $\theta$-averaged on $\cH^{n-1}$, $\theta \in (0, 1)$, equipped with the product norm.
    \end{enumerate}

    In this paper, we give a positive answer to the question showing that the entire class of methods satisfying properties \refallprop\ is given by Algorithm~\ref{alg:simple_splitting_FPE_introduction}, meaning that all methods covered by Algorithm~\ref{alg:simple_splitting_FPE_introduction} satisfy \refallprop\ and that no other methods satisfy all these requirements. This represents the first, yet partial, answer to a challenging open problem proposed by Ryu in \cite{Ryu}, on the characterization of all \emph{unconditionally stable} methods.  
    
    Although the central question is of pure intellectual interest, its resolution comes with significant practical implications. Indeed, while Algorithm~\ref{alg:simple_splitting_FPE_introduction} can be shown to encompass several methods in the literature, it also allows us to devise new ones with desirable numerical properties. In particular, in Algorithm~\ref{alg:distributed_algorithm}, we propose a special case of Algorithm~\ref{alg:simple_splitting_FPE_introduction} that can easily handle data heterogeneity and admits fully distributed implementations on general networks, without requiring knowledge of a global cocoercivity constant. We further conduct a systematic numerical study examining the influence of the four parametrizing matrices $M$, $P$ and $H, K$ and propose three heuristics to achieve excellent performance in practice. The resulting method enhanced by our heuristics exhibits unparalleled performance, significantly outperforming all existing variants.
    
    \begin{algorithm}[t]
    	\caption{The complete class of $\theta$-averaged frugal splitting methods with minimal lifting to solve \eqref{eq:Inclusion}.}
    	\label{alg:simple_splitting_FPE_introduction}
    	\textbf{Pick:} A relaxation parameter $\theta \in (0, 1)$, and:
    	\begin{enumerate}
    		\item $M\in \R^{n\times (n-1)}$ full rank, with $\Ran (\one) = \Null( M^T)$
    		\item $P \in \R^{n\times (n-1)}$ with $ \Ran(\one) \subset \Null (P^T)$
    		\item $H, K^T \in \R^{n\times m}$ causal, according to Definition~\ref{def:causal_pair}, and such that $H^T\one=K\one=\one$ 
    	\end{enumerate}
    	\textbf{Let:} $S := MM^T + PP^T + \frac{1}{2}(H - K^T)\diag(\beta)(H^T-K)$ and $\gamma := 2\diag(S)^{-1}$ \\
    	\textbf{Input:} {$\bz^0=(z^0_1, \dots, z^0_{n-1})\in \cH^{n-1}$}\\
    	\For {$k = 0, 1, 2, \ldots$}{
    		\For {$i = 1, 2, \dots, n$}{
    			$\displaystyle x^{k+1}_i = J_{ \gamma_i  A_i}\bigg(- \gamma_i \sum_{h=1}^{i-1} S_{ih} x_h^{k+1} -  \gamma_i \sum_{j=1}^{m}H_{ij} C_j\bigg(\sum_{h = 1}^{i-1}K_{jh}x^{k+1}_h\bigg) +  \gamma_i \sum_{j=1}^{n-1}M_{ij}z^k_j \bigg)$
    		}
    		\For {$i = 1, 2, \dots, n-1$}{
    			$\displaystyle z^{k+1}_i = z^k_{i}-\theta \sum_{i=1}^n M_{ij}x^{k+1}_i$
    		}
    	}
    \end{algorithm}

	\subsection{Organization of the paper} The paper is organized as follows. In Section~\ref{sec:Algorithm Model}, we postulate our algorithm model that concretizes our \emph{splitting} definition \eqref{prop:splitting} as well as the notion of \emph{frugality} \eqref{prop:frugal}. A dimensionality constraint on the involved matrices restricts the class of algorithms to \emph{minimal lifting} methods \eqref{prop:minlift}. In Section~\ref{sec:Conditions_for_fixed-point-encoding}, we make restrictions to this model to exactly capture all algorithms for solving \eqref{eq:Inclusion} that in addition are \emph{fixed-point encodings} according to Definition~\ref{def:FPE}. In Section~\ref{sec:conditions_for_nonexpansivness}, we complete our analysis by deriving necessary and sufficient conditions for nonexpansiveness. In Section~\ref{sec:Problem_formulation_and_parameterization}, we present a reformulation of the class of algorithms leading to Algorithm~\ref{alg:simple_splitting_FPE_introduction}, along with existing and new special cases. We conclude in Section~\ref{sec:numerics} with heuristics for selecting the parameterizing matrices, a systematic numerical study and numerical comparisons with existing methods.
	
    \subsection{Contributions} Let us highlight the main contributions of this paper:
    \begin{itemize}
    	\item We show that if $m=0$, Algorithm~\ref{alg:simple_splitting_FPE_introduction} parametrizes all $\theta$-averaged frugal resolvent splitting methods with minimal lifting, cf.~Theorem~\ref{thm:frugal_resolvent_iff}.  
    	\item We show that if $m>0$ and $\dim(\cH)\geq 2n+m-1$, Algorithm~\ref{alg:simple_splitting_FPE_introduction} parametrizes all $\theta$-averaged frugal splitting methods with minimal lifting, cf.~Theorem~\ref{thm:frugal_splitting_iff}.  
    	\item We propose in Algorithm~\ref{alg:distributed_algorithm} a special case of Algorithm~\ref{alg:simple_splitting_FPE_introduction} that can handle efficiently different $\beta_i$s, and admits distributed implementations on general networks without requiring the knowledge of a global cocoercivity constants.
    	\item We propose in Section~\ref{sec:numerics} three heuristic choices of $M$, $P$ and $H, K$ that make Algorithm~\ref{alg:simple_splitting_FPE_introduction} particularly efficient compared to other existing special cases.
    \end{itemize}

\subsection{Notation}

Let $\N$ denote the set of nonnegative integers, $\R$ the set of real numbers, and $\R_+$, $\R_{++}$ the set of nonnegative and positive real numbers, respectively. Let $\llbracket i,j\rrbracket := \{i,i+1,\ldots,j\}$ with the convention that $\llbracket i,j\rrbracket=\emptyset$ if $j<i$, $\kron$ denote the tensor product, $\cH$ denote a real Hilbert space with identity operator denoted by $\Id$. For a matrix $M\in \R^{n\times m}$ we denote by $M_{ij}$ its $(i, j)$ component. We denote by $\slt( M)\in \R^{n \times m}$ the strictly lower triangular matrix extracted from $M$, and by $\diag(M) \in \R^{n}$ the vector extracted from the main diagonal of $M$. With a mild overload of notation, if $v \in \R^n$, $\diag(v) \in \R^{n\times n}$ denotes the diagonal matrix with the diagonal being $v$.

For a multivalued map $A: \cH\to 2^\cH$, its \emph{graph} is the set $\gra( A) := \{(x, a) \in \cH^2\ : \ a \in A(x)\}$, while its \emph{domain} is $\dom (A ):= \{x \in \cH \ : \ A(x) \neq \emptyset\}$. An operator $A:\cH\to 2^\cH$ is \emph{monotone} if 
\begin{equation}
    \langle a - a', x - x'\rangle \geq 0, \quad \text{for all} \ (x, a),  \ (x', a') \in \gra (A),
\end{equation}
and is \emph{maximal montone} if its graph is maximal among the class of monotone operators. The \emph{inverse} of $A$ is the unique operator $A^{-1}$ such that $\gra (A^{-1})=\{(a, x) \ : \ (x, a) \in \gra (A)\}$ and the \emph{resolvent} of $A$ is the operator $J_A:=(\Id + A)^{-1}$, which is single-valued if $A$ is monotone, see \cite[Proposition 23.8]{BCombettes}. By Minty's theorem \cite[Theorem 21.1]{BCombettes} a monotone operator is maximal if and only if its resolvent $J_A$ has full domain. In this case, it defines a map from $\cH$ to itself. In such cases, we identify multivalued operators with functions on $\cH$. A monotone operator $C: \cH \to 2^\cH$ is $\frac{1}{\beta}$-\emph{cocoercive} if it is single-valued, has full domain, and satisfies:
\begin{equation}
    \beta \langle C(x) - C(x'), x-x'\rangle \geq \|C(x) - C(x')\|^2, \quad \text{for all} \ x, x'\in \cH, 
\end{equation}
in particular, it is $\beta$-Lipschitz continuous and if $\beta=0$ it is necessarily constant. Eventually, we denote by $\cA_n$ and by $\cC_m$ the families of $n$ and $m$ tuples of maximal monotone and cocoercive operators, respectively.

We identify matrices $M \in \mathbb{R}^{n \times m}$ with block-operators $(M\kron\Id):\cH^m\to\cH^n$, when clear by context. In particular, for any $z \in \cH^m$, we abbreviate $(M \kron \Id)z$ by $Mz$. Similarly, if $\bC: \cH^n \to \cH^n$ and $\bC': \cH^m \to \cH^m$ are operators, we interpret the products $\bC M$ and $M\bC'$ as $\bC \circ (M \kron \Id)$ and $(M \kron \Id) \circ \bC'$, respectively. Further, we denote by $\mathbb{S}_+^n$ and $\mathbb{S}_{++}^n$ the spaces of positive semidefinite and positive definite matrices in $\R^{n\times n}$. Given Hilbert spaces $\cH_1,\dots, \cH_n$ and $A_i: \mathcal{H}_i\to 2^\mathcal{H}_i$, we denote by $\bA(\bx):=\diag (A_1,\dots, A_n)$ the \emph{diagonal} operator defined by $\bA(\bx):= A_1(x_1) \times A_2(x_2)\times \cdots \times A_n(x_n)$, for all $\bx=(x_1, \dots, x_n) \in \cH_1\times \dots \times \cH_n$.

\section{Algorithmic model}\label{sec:Algorithm Model}
   
To capture the entire class of methods with properties \eqref{prop:splitting} to \eqref{prop:averaged}, we start out by defining the entire class of \emph{frugal splitting methods with minimal lifting} satisfying \eqref{prop:splitting}--\eqref{prop:minlift}. 
\begin{definition}[Frugal splitting operators with minimal lifting]\label{def:frugal_minlift_splitting} Let $\Gamma:=\diag(\gamma) \in \R^{n\times n}$ with $\gamma \in \R^n_{++}$, $L\in \R^{n\times n}$, $V^T, M\in \R^{n\times (n-1)}$, $U\in \R^{(n-1)\times (n-1)}$, $H^T,K\in \real^{m\times n}$,  $Z^{(1)}, \in \real^{m\times (n-1)}$, $Z^{(2)}\in \real^{m\times m}$, and $Z^{(3)}\in \real^{(n-1)\times m}$. A \emph{frugal splitting operator with minimal lifting} is an operator $T: \cH^{n-1}\to \cH^{n-1}$ defined by $\bz^+:= T(\bz)$ with:
\begin{equation}\label{eq:abstract_fs_block}
   \left\{
	\begin{aligned}
	 &\bx^{+} = J_{\Gamma \bA}(\Gamma L \bx^+ - \Gamma H\bu^+ + \Gamma M \bz ),\\
     &\bu^+ = \bC(K\bx^+ + Z^{(2)}\bu^+ + Z^{(1)}\bz),\\
     & \bz^{+} = U  \bz - V  \bx^{+} - Z^{(3)}\bu^+,
    \end{aligned}
    \right. 
\end{equation}
and such that $\bz^+$ can be computed only employing additions and scalar multiplications in $\cH$, and exactly one evaluation of $J_{\gamma_i A_i}$ and $C_j$ for each $i \in \llbracket 1, n\rrbracket$ and $j \in \llbracket 1, m\rrbracket$, and for each $A\in \cA_n$, $C\in \cC_m$ and any real Hilbert space $\cH$. Given $T$ as in \eqref{eq:abstract_fs_block}, a \emph{frugal splitting method with minimal lifting} is a fixed-point iteration defined by
    \begin{equation}
    	\bz^{k+1} = T(\bz^k), \quad \text{for all} \ k \in \N, \ \bz^0 \in \cH^{n-1}.
    \end{equation}
\end{definition}

Observe that each resolvent evaluation automatically produces elements $a_i^+ \in A_i(x_i^+)$ for all $i\in \llbracket1, n\rrbracket$. The evaluation of $T$ as in \eqref{eq:abstract_fs_block} at some $\bz\in \cH^{n-1}$ can be thus characterized as the \emph{unique} solution to the following system of nonlinear equations:
\begin{equation}\label{eq:abstract_fs_inclusion}
\begin{aligned}
    &\text{Find $\bz^+ \in \cH^{n-1}$, $\bx^+\in \cH^{n}$, $\bu^+ \in \cH^m$, $\ba^+\in \bA(\bx^+)$ such that:}\\
    &\quad \left\{
    \begin{aligned}
	 &(\Gamma^{-1}- L)\bx^{+} + \ba^+ = M \bz - H \bu^+,\\
     &\bu^+ = \bC(K\bx^+ + Z^{(2)}\bu^+ + Z^{(1)}\bz),\\
     & \bz^{+} = U  \bz - V  \bx^{+} - Z^{(3)}\bu^+.
    \end{aligned}
    \right. 
\end{aligned}
\end{equation}
In the following, we say that $\bz^+ \in \cH^{n-1}$, $\bx^+\in \cH^{n}$, $\bu^+ \in \cH^m$, $\ba^+\in \bA(\bx^+)$ \emph{solve} \eqref{eq:abstract_fs_inclusion} at $\bz$. Alternatively, we say that $\bx^+$, $\ba^+$ and $\bu^+$ are \emph{generated} by the evaluation of $T(\bz)$.

The operator in \eqref{eq:abstract_fs_block} is \emph{frugal} \eqref{prop:frugal} since each resolvent of $A_i$ and each $C_i$ can be evaluated exactly once per iteration and it has \emph{minimal lifting} \eqref{prop:minlift} since the corresponding fixed-point iteration lies in $\cH^{n-1}$. Being able to explicitly evaluate $T$ immediately induces a specific structure on $L$, and the matrices $H$, $K$ and $Z^{(2)}$, a property that we refer to as \emph{causality}. Obviously, up to a permutation, $L$ and $Z^{(2)}$ should be strictly lower triangular. For  $H$ and $K$ the situation is more subtle---we discuss their structural properties in Section~\ref{sec:causality}. Additionally, our algorithm model includes methods that may or may not satisfy \eqref{prop:FPE} and \eqref{prop:averaged}, i.e., that may or may not have the \emph{fixed-point encoding property}, and be \emph{averaged nonexpansive}. As we proceed, we will provide necessary and sufficient conditions for each of these properties, in terms of structural constraints of the involved matrices. 

    \subsection{Causality}\label{sec:causality}
    
    In order for an operator $T$ defined as in \eqref{eq:abstract_fs_block} to be well-defined, specifically to only requiring single evaluations of $J_{\gamma_i A_i}$ and $C_j$ for each $A \in \cA_n$ and $C \in \cC_m$, the matrices $L$, $H$, $K$ and $Z^{(2)}$ need to meet certain structural properties.
    
    First, let us observe that any frugal splitting method automatically defines a partial order (denoted by $\hookrightarrow$) on the evaluations of operators $A_1,  \dots, A_n$, $C_1, \dots, C_m$. As this ordering is implicit in Definition~\ref{def:frugal_minlift_splitting}, we will assume it is explicitly given, and we require further structure on the matrices of \eqref{eq:abstract_fs_block} in order to not violate it. Additionally, note that, up to relabeling, we shall also assume that $A_i \hookrightarrow A_j$ and, similarly, $C_i \hookrightarrow C_j$, for each $i<j$. To comply with \eqref{eq:abstract_fs_block}, we necessarily have $A_1\hookrightarrow C_j$ and $C_j \hookrightarrow A_n$ for all $j \in \llbracket 1, m\rrbracket$.

    Second, we find that the ordering $\hookrightarrow$ is uniquely characterized by a vector $F \in \N^n$ with $F_1=0$, $F_n=m$ and $F_i \leq F_{i+1}$ for each $i \in \llbracket 1, n-1\rrbracket$, a notion that we refer to as a \emph{$(m, n)$-nondecreasing vector} (see Proposition~\ref{prop:Causal_F}). In our algorithms, we will interpret $F_i$ as the number of forward operators that are evaluated before applying the $i$th backward operator in each iteration. In \eqref{eq:abstract_fs_block}, backward steps are evaluated both first and last, which forces $F_1 = 0$ and $F_n = m$.
    
    With this notation, we can show the following, whose proof is postponed to the appendix.
    \begin{proposition}
        \label{prop:causal_iff}
        Each frugal splitting operator with minimal lifting as parameterized by \eqref{eq:abstract_fs_block} is well-defined if and only if $L$ and $Z^{(2)}$ are strictly lower triangular and $H,K^T$ is a causal pair of matrices according to Definition~\ref{def:causal_pair}.
    \end{proposition}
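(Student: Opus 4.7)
My plan is to prove the two directions of the equivalence separately. Recall that, up to relabeling, $A_i \hookrightarrow A_j$ and $C_i \hookrightarrow C_j$ whenever $i < j$, and $A_1 \hookrightarrow C_j \hookrightarrow A_n$ for every $j$. By Proposition~\ref{prop:Causal_F}, the ordering $\hookrightarrow$ is therefore determined by an $(m,n)$-nondecreasing vector $F$ whose $i$th component counts the forward evaluations performed before the $i$th backward step; set $F_0 := 0$. I interpret the causality of $(H, K^T)$ in Definition~\ref{def:causal_pair} as the pair of support conditions $H_{ij} = 0$ whenever $j > F_i$ (so that $C_j$ is scheduled after $A_i$) and $K_{ji} = 0$ whenever $j \leq F_i$ (so that $A_i$ is scheduled after $C_j$).

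For the sufficiency direction, I assume $L$ and $Z^{(2)}$ are strictly lower triangular and that $(H, K^T)$ is causal, and exhibit an explicit evaluation schedule matching $\hookrightarrow$: for each $i = 1, \dots, n$, first compute $u_j^+ = C_j\bigl(\sum_{h} K_{jh} x_h^+ + \sum_{l<j} Z^{(2)}_{jl} u_l^+ + (Z^{(1)}\bz)_j\bigr)$ for $j \in \llbracket F_{i-1}+1, F_i \rrbracket$, and then compute $x_i^+ = J_{\gamma_i A_i}\bigl((\Gamma(M\bz - L\bx^+ - H\bu^+))_i\bigr)$. Strict lower triangularity of $L$ and $Z^{(2)}$ ensures that the $i$th resolvent input and the $j$th forward input involve only previously produced $x_h^+$ and $u_l^+$, while the support conditions on $H$ and $K$ guarantee that the cross terms between $\bx^+$ and $\bu^+$ only reference variables already computed at the corresponding stage. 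This yields a valid straight-line program using exactly one call of each $J_{\gamma_i A_i}$ and each $C_j$, so $T$ is well-defined, and the final assignment $\bz^+ = U\bz - V\bx^+ - Z^{(3)}\bu^+$ is a simple linear combination.

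For the necessity direction, I argue by contrapositive: assume that, for every $\bA \in \cA_n$, $\bC \in \cC_m$, every Hilbert space $\cH$ and every input $\bz$, the system \eqref{eq:abstract_fs_inclusion} admits a frugal evaluation, and derive the structural conditions on $L$, $Z^{(2)}$, $H$, $K$. The key observation is that each $J_{\gamma_i A_i}$ can be applied only to a quantity that is already built as a linear combination of $\bz$ and previously computed $x_h^+, u_l^+$; otherwise, selecting $A_i$ as a nontrivial linear operator on, say, $\cH = \R$ produces a resolvent whose argument depends on its own output, giving an implicit equation irresolvable with a single call to $J_{\gamma_i A_i}$ at the fixed step size $\gamma_i$. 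This forces $L$ to vanish on and above the diagonal, i.e., to be strictly lower triangular; replacing $\bA$ by $\bC$ and the resolvent argument by the cocoercive operator's argument gives the same conclusion for $Z^{(2)}$. A nonzero $H_{ij}$ with $j > F_i$ would require $u_j^+$ (hence $C_j$) before $A_i$, contradicting the position of $C_j$ relative to $A_i$ in $\hookrightarrow$; analogously, a nonzero $K_{ji}$ with $j \leq F_i$ would force $A_i$ to precede $C_j$, again contradicting $\hookrightarrow$. Both violations are exposed by choosing the offending operator to be, e.g., a generic linear map and varying $\bz$ so that the implicated coefficient genuinely contributes.

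The main obstacle, in my view, lies in the necessity direction: rigorously ruling out that a ``forbidden'' nonzero entry is cancelled by a conspiracy among the other matrices (e.g., two off-diagonal entries of $L$ combining so that the effective coefficient of some $x_h^+$ in the $i$th resolvent input vanishes identically). The argument must be uniform in $(\bA, \bC)$ and in $\bz$, and exploit enough freedom to choose operators on sufficiently large Hilbert spaces that no such cancellation holds for all admissible inputs. I expect the cleanest route is an induction on $i$ following the order $\hookrightarrow$, combined with a genericity argument using diagonal linear operators on $\cH = \R^d$ for suitable $d$, so that each coordinate of the resolvent/forward inputs is separately forced to match a prescribed linear combination of already-produced quantities.
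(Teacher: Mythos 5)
Your argument follows essentially the same lines as the paper's: sufficiency via an explicit evaluation schedule that respects the ordering $\hookrightarrow$, and necessity by observing that a forbidden nonzero entry forces the input to a resolvent (or a $C_j$) to reference an output that, per $\hookrightarrow$, is produced later. The schedule you exhibit for sufficiency is a more concrete version of the paper's one-line assertion, and your reading of the causality conditions on $H$ and $K^T$ is exactly Definition~\ref{def:causal_pair}.

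The ``conspiracy'' obstacle you flag in the necessity direction is not a genuine gap, and the paper does no extra work to dismiss it either. Recall that $\hookrightarrow$ is fixed a priori (Section~\ref{sec:causality}), and that, by Definition~\ref{def:frugal_minlift_splitting}, the evaluation of $T$ is a single fixed straight-line program that must work simultaneously for every $(A,C)\in\cA_n\times\cC_m$ and every real Hilbert space $\cH$. In the input to $J_{\gamma_iA_i}$, namely $\gamma_i(L\bx^+)_i-\gamma_i(H\bu^+)_i+\gamma_i(M\bz)_i$, the coefficient of $x_j^+$ contributed by $L$ is the fixed scalar $\gamma_iL_{ij}$, while any further dependence on $x_j^+$ passes through $\bu^+=\bC(K\bx^++\cdots)$ and is nonlinear in $x_j^+$ for generic $C_\ell$. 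Hence no cancellation can hold uniformly over all admissible $(A,C)$. Concretely, setting $C=0$ already forces $L$ to be strictly lower triangular: a nonzero $L_{ij}$ with $i\le j$ makes the input to $J_{\gamma_iA_i}$ reference the output of $J_{\gamma_jA_j}$, which is evaluated no earlier. One-shot test operators such as $C_j=\Id$ together with nontrivial $A_h$ handle $Z^{(2)}$, $H$, and $K$ in the same way. Your proposed genericity induction would also work, but it is heavier machinery than the situation requires.
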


\subsection{Equivalence of algorithms}
\label{sec:NE-norm_and_equivalence}

Following \cite{Ryu}, we now introduce a notion of \emph{equivalence} between frugal splitting operators that immediately reduces the degrees of freedom in \eqref{eq:abstract_fs_block}.

\begin{definition}[Equivalent frugal splitting algorithms]\label{def:equivalence}
	Two frugal splitting methods with minimal lifting are \emph{equivalent} if the corresponding frugal splitting operators $T$ and $\bar T$ satisfy at least one of the following:
	\begin{enumerate}[label=(\roman*)]
    \item\label{item:equivalence_linear_transformation} There exists an invertible matrix $\mathfrak{E} \in \R^{(n-1)\times (n-1)}$ such that $\bar T =  \mathfrak{E}^{-1}T(\mathfrak{E})$ for any choice of $A\in \cA_n$ and $C\in \cC_m$ and any real Hilbert space $\cH$,
	\item $\bar T$ can be obtained from $T$ by swapping the order of $A_1, \dots, A_n, C_1, \dots, C_m$, or by scaling these monotone operators by some $\tau > 0$.
	\end{enumerate}
\end{definition}

Frugal splitting methods with respect to equivalent operators generate sequences that indeed coincide modulo an isomorphism of $\cH^{n-1}$, or can be obtained with a different step size $\tau>0$ or by swapping the order of the operators. Since in Section~\ref{sec:causality} we fixed the operators and their order of evaluation, in what remains, two equivalent methods are always understood in the sense of Item~\ref{item:equivalence_linear_transformation}. In the following result, we show that this is tightly related to a change of metric in $\cH^{n-1}$. We will use this result in Section~\ref{sec:conditions_for_nonexpansivness} to fix a-priori the canonical norm.

\begin{proposition}\label{prop:equivalence}
A frugal splitting operator $T$ with minimal lifting according to Definition~\ref{def:frugal_minlift_splitting} is nonexpansive in the norm $\|\cdot \|_Q$ for $Q \in \mathbb{S}_{++}^{n-1}$ if and only if there exists an equivalent operator $\bar T$ that is nonexpansive w.r.t.~the canonical norm.
\end{proposition}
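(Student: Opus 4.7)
The plan is to prove this by a straightforward change-of-variables argument based on Cholesky factorization, with the bulk of the work going into verifying that the transformed operator still fits the algorithmic model of Definition~\ref{def:frugal_minlift_splitting}.

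For the $(\Rightarrow)$ direction, suppose $T$ is nonexpansive in $\|\cdot\|_Q$ with $Q \in \mathbb{S}_{++}^{n-1}$. I would factor $Q = \mathfrak{E}^{-T}\mathfrak{E}^{-1}$ for some invertible $\mathfrak{E}\in\R^{(n-1)\times(n-1)}$ (Cholesky, say), and set $\bar T := \mathfrak{E}^{-1}T(\mathfrak{E}\,\cdot)$. Writing out the system \eqref{eq:abstract_fs_block} with input $\mathfrak{E}\bw$ and output $\bw^+ = \mathfrak{E}^{-1}\bz^+$, one finds that $\bar T$ is again defined by a system of the form \eqref{eq:abstract_fs_block} with updated matrices
\begin{equation*}
\bar M = M\mathfrak{E},\quad \bar Z^{(1)} = Z^{(1)}\mathfrak{E},\quad \bar U = \mathfrak{E}^{-1}U\mathfrak{E},\quad \bar V = \mathfrak{E}^{-1}V,\quad \bar Z^{(3)} = \mathfrak{E}^{-1}Z^{(3)},
\end{equation*}
while $\Gamma$, $L$, $H$, $K$, and $Z^{(2)}$ are unchanged. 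Since the resolvents and forward evaluations are invoked exactly as in $T$, and since the causality-related matrices are untouched, $\bar T$ is a bona fide frugal splitting operator with minimal lifting, equivalent to $T$ in the sense of Definition~\ref{def:equivalence}\ref{item:equivalence_linear_transformation}.

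The nonexpansiveness then transfers by direct computation: for any $\bw, \bw' \in \cH^{n-1}$, setting $\bz = \mathfrak{E}\bw$, $\bz' = \mathfrak{E}\bw'$,
\begin{equation*}
\|\bar T(\bw) - \bar T(\bw')\|^2 = \|\mathfrak{E}^{-1}(T(\bz)-T(\bz'))\|^2 = \|T(\bz)-T(\bz')\|_Q^2 \leq \|\bz-\bz'\|_Q^2 = \|\bw-\bw'\|^2,
\end{equation*}
where the identity $\|\cdot\|_Q^2 = \|\mathfrak{E}^{-1}\cdot\|^2$ follows from $Q = \mathfrak{E}^{-T}\mathfrak{E}^{-1}$. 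The converse direction $(\Leftarrow)$ reverses exactly the same chain of equalities: given $\bar T = \mathfrak{E}^{-1}T(\mathfrak{E}\,\cdot)$ nonexpansive in the canonical norm, the matrix $Q := \mathfrak{E}^{-T}\mathfrak{E}^{-1}$ lies in $\mathbb{S}_{++}^{n-1}$ and nonexpansiveness of $T$ in $\|\cdot\|_Q$ drops out from the same computation read backwards.

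The only nontrivial point, and the one I would flag as the main obstacle, is the verification that $\bar T$ remains admissible under Definition~\ref{def:frugal_minlift_splitting}: one must check that the right-hand sides in the transformed system depend on $\bw$ only through $\mathfrak{E}\bw$, that the new coupling matrices have the correct dimensions, and that the causality/strict lower triangularity of $L$, $H$, $K$, $Z^{(2)}$ is preserved (which it is, trivially, since these matrices are not touched by the transformation). With this bookkeeping in hand, the equivalence of the two nonexpansiveness statements is just the identity $\|\cdot\|_Q = \|\mathfrak{E}^{-1}\cdot\|$ applied to the image of $T$.
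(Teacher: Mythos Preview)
Your proof is correct and follows essentially the same route as the paper: construct $\bar T=\mathfrak{E}^{-1}T(\mathfrak{E}\,\cdot)$ from a factorization of $Q$, verify that $\bar T$ is still an instance of \eqref{eq:abstract_fs_block} with the same updated matrices $\bar M, \bar U, \bar V, \bar Z^{(1)}, \bar Z^{(3)}$, and transfer nonexpansiveness via $\|\cdot\|_Q=\|\mathfrak{E}^{-1}\cdot\|$. The only cosmetic difference is that the paper takes the symmetric square root $\mathfrak{E}=Q^{-1/2}$ rather than a Cholesky factor, and you are somewhat more explicit than the paper in noting that the causality-related matrices $L,H,K,Z^{(2)}$ are untouched.
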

\begin{proof}
	Let $\mathfrak{E} := Q^{-\frac{1}{2}}$, $T$ be nonexpansive in the norm $\|\cdot\|_Q$, and define $\bar T := \mathfrak{E}^{-1} T(\mathfrak{E})$. Then $\bar T$ is nonexpansive with respect to the canonical norm since for all $\bz, \bz' \in \cH^{n-1}$
	\begin{equation}
		\begin{aligned}
			\|\bar T (\bz)- \bar T(\bz')\| &= \|Q^{\frac{1}{2}}T(Q^{-\frac{1}{2}}\bz)-Q^{\frac{1}{2}}T(Q^{-\frac{1}{2}}\bz')\| = \|T(Q^{-\frac{1}{2}}\bz)-T(Q^{-\frac{1}{2}}\bz')\|_Q \\
			& \leq \|Q^{-\frac{1}{2}}\bz-Q^{-\frac{1}{2}}\bz'\|_Q = \|\bz - \bz'\|.
		\end{aligned}
	\end{equation}
	Additionally, $\bar T$ is an instance of \eqref{eq:abstract_fs_block} with $\bar U := \mathfrak{E}^{-1}U \mathfrak{E}$, $\bar V:= \mathfrak{E}^{-1}V$, $\bar Z^{(3)} := \mathfrak{E}^{-1} Z^{(3)}$, $\bar Z^{(1)}:= Z^{(1)} \mathfrak{E}$, and $\bar M:= M \mathfrak{E}$. Therefore, $T$ and $\bar T$ are equivalent according to Definition~\ref{def:equivalence}. The converse is similar.
\end{proof}

\section{Necessary and sufficient conditions for fixed-point encoding}\label{sec:Conditions_for_fixed-point-encoding}

Not all minimal lifting frugal splitting methods, as parameterized by~\eqref{eq:abstract_fs_block}, are useful for solving the inclusion problem \eqref{eq:Inclusion}. Consider for example the algorithm parameterized by $U = I$, $V = 0$, and $Z^{(3)}=0$, which gives the fixed-point iteration $\bz^{k+1} = \bz^k$. This fixed-point iteration does not depend on the operators defining the inclusion problem, and fixed points of this fixed-point iteration give no meaningful information about solutions of \eqref{eq:Inclusion}. We are interested in the exact subset of algorithms of the form \eqref{eq:abstract_fs_block} that have the fixed-point encoding property, meaning solutions to \eqref{eq:Inclusion} can easily be extracted from a fixed point of \eqref{eq:abstract_fs_block} and that the zero set of \eqref{eq:Inclusion} is nonempty if and only if the fixed-point set of \eqref{eq:abstract_fs_block} is. In the following definition, a simple adaptation of \cite{mt23, Ryu}, we concretize the fixed-point encoding property outlined in~\eqref{prop:FPE}.

\begin{definition}[Fixed-point encoding]\label{def:FPE}
	A frugal splitting operator $T$ according to Definition~\ref{def:frugal_minlift_splitting} is a \emph{fixed-point encoding} if there exist $s_z \in \R^{n-1}$, $s_x \in \R^{n}$, and $s_u \in \R^{m}$ such that the map $\mathfrak{s}:\cH^{n-1}\to \cH$ defined by
    \begin{equation}
        \mathfrak{s}(\bz):=s_z^T \bz + s_x^T \bx^+  + s_u^T \bu^+, \quad \text{for all} \ \bz \in \cH^{n-1},
    \end{equation}
    where $\bx^+, \bu^+$ are obtained through evaluation of $T(\bz)$, satisfies, for all $A \in \mathcal{A}_n$, $C \in \cC_m$ and real Hilbert space $\cH$,
	\begin{enumerate}[label=(\roman*)]
		\item\label{item:zero_empty_iff_fix_empty} $\zer\left(A_1 + \dots +A_n + C_1 + \dots + C_m\right) \neq \emptyset \iff \Fix(T) \neq \emptyset$,
		\item\label{item:fpe_solution_map_solves_pb} $\bz^\star \in \Fix(T) \implies \mathfrak{s}(\bz^\star) \in \zer\left(A_1 + \dots +A_n + C_1 + \dots + C_m\right)$.
	\end{enumerate}
    In this case, $\mathfrak{s}$ is called \emph{solution map}.
\end{definition}

Indeed, adapting \cite[Proposition 1]{mt23}, we can immediately simplify the setting, since for any fixed-point encoding, the solution map is necessarily of a specific form:
\begin{proposition}\label{prop:solution_map}
Let $T$ be a frugal splitting operator with minimal lifting according to Definition~\ref{def:frugal_minlift_splitting}. Suppose that $T$ is a fixed-point encoding according to Definition~\ref{def:FPE} and let $\mathfrak{s} : \cH^{n-1} \to \cH$ be the corresponding solution map. Then, for all $\bz \in \Fix(T)$ 
\begin{enumerate}[label=(\roman*)]
\item\label{item:prop_solution_map_sum} $a_1^+ + \dots + a_n^+ + u_1^+ + \dots + u_n^+ =0$,
\item\label{item:prop_solution_map_characterization} $\mathfrak{s}(\bz) = x_1^+ = \dots =x_n^+$,
\end{enumerate}
where $\bx^+\in \cH^n$, $\ba^+\in \cH^n$ and $\bu^+\in \cH^m$ are obtained through evaluation of $T(\bz)$.
\end{proposition}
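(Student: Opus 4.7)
The plan is to adapt \cite[Proposition 1]{mt23} by exploiting that the fixed-point encoding property must hold for \emph{every} choice of $A\in\cA_n$, $C\in\cC_m$, testing it against two surrogate tuples that coincide with $A,C$ on the evaluations generated at $\bz$ -- thus preserving $\bz$ as a fixed point -- but whose zero sets are prescribed and rigid.

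Fix $\bz\in\Fix(T)$ and let $\bx^+,\ba^+,\bu^+$ be the tuple generated by $T(\bz)$ through \eqref{eq:abstract_fs_inclusion}. For part~\ref{item:prop_solution_map_sum}, I would define $\bar A_i\equiv a_i^+$ and $\bar C_j\equiv u_j^+$ to be the corresponding constant operators, which are respectively maximal monotone and $\tfrac{1}{\beta_j}$-cocoercive for any $\beta_j\geq 0$. Since $J_{\gamma_i\bar A_i}(y)=y-\gamma_i a_i^+$, and the causal structure from Proposition~\ref{prop:causal_iff} forces $L,Z^{(2)}$ to be strictly lower triangular and $(H,K^T)$ to be a causal pair, a straightforward induction on $i$ shows that $(\bx^+,\ba^+,\bu^+)$ solves \eqref{eq:abstract_fs_inclusion} at $\bz$ also for $\bar A,\bar C$, and hence $\bz\in\Fix(\bar T)$. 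Item~\ref{item:zero_empty_iff_fix_empty} of Definition~\ref{def:FPE} then yields $\zer(\sum_i\bar A_i+\sum_j\bar C_j)\neq\emptyset$; but this sum is the constant $\sum_i a_i^+ + \sum_j u_j^+$, which must therefore vanish, giving~\ref{item:prop_solution_map_sum}.

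For part~\ref{item:prop_solution_map_characterization}, I would instead take $\bar A_i$ to be the maximal monotone operator with graph $\{x_i^+\}\times\cH$, which passes through $(x_i^+,a_i^+)$, and keep $\bar C_j\equiv u_j^+$. Its resolvent is the constant map returning $x_i^+$, so the same inductive argument yields $\bz\in\Fix(\bar T)$. Item~\ref{item:zero_empty_iff_fix_empty} of Definition~\ref{def:FPE} gives $\zer(\sum_i\bar A_i+\sum_j\bar C_j)\neq\emptyset$, and since $\dom(\bar A_i)=\{x_i^+\}$ this forces $x_1^+=\cdots=x_n^+=:x^\star$; the zero set is then exactly $\{x^\star\}$ because $\sum_i\bar A_i(x^\star)=\cH$ absorbs any constraint coming from the $\bar C_j$. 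The constants $s_z,s_x,s_u$ defining $\mathfrak{s}$ are fixed independently of the operators, and both $T$ and $\bar T$ generate the same tuple at $\bz$, so the solution map of $\bar T$ at $\bz$ equals $\mathfrak{s}(\bz)$. Applying item~\ref{item:fpe_solution_map_solves_pb} of Definition~\ref{def:FPE} to $\bar T$ then gives $\mathfrak{s}(\bz)=x^\star=x_i^+$ for each $i$, proving~\ref{item:prop_solution_map_characterization}.

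The main technical point will be the induction showing that $\bz$ remains a fixed point of $\bar T$: it proceeds along the triangular evaluation order enforced by Proposition~\ref{prop:causal_iff}, using that both the constant and the singleton-domain surrogates return at $\bz$ exactly the values generated by the original operators, so that the subsequent $\bz$-update is unchanged. Everything else is essentially a readout of the fixed-point encoding definition for these two carefully tailored instances of the problem.
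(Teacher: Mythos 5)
Your proposal is correct, and your handling of part~\ref{item:prop_solution_map_sum} is identical in substance to the paper's argument (the $j=0$ surrogate $A_i^{(0)}\equiv a_i^+$, $C_j^{(0)}\equiv u_j^+$). For part~\ref{item:prop_solution_map_characterization}, however, you take a genuinely different route. The paper tests against $n$ additional surrogates $A^{(j)}$, $j\in\llbracket1,n\rrbracket$, where only the $j$th operator is perturbed affinely, $A^{(j)}_j(x)=a_j^+ + x - x_j^+$, and all others remain constant; it then invokes item~\ref{item:fpe_solution_map_solves_pb} of Definition~\ref{def:FPE} together with the already-proved part~\ref{item:prop_solution_map_sum} to reduce $\sum A^{(j)}_i+\sum C^{(j)}_i$ to $x\mapsto x-x_j^+$, concluding $\mathfrak{s}(\bz)=x_j^+$ for each $j$ separately. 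You instead build a single surrogate with $\bar A_i=\partial\iota_{\{x_i^+\}}$ and exploit that the domain of the sum is $\bigcap_i\{x_i^+\}$, so that item~\ref{item:zero_empty_iff_fix_empty} already forces consensus $x_1^+=\cdots=x_n^+$, after which item~\ref{item:fpe_solution_map_solves_pb} pins down $\mathfrak{s}(\bz)$. Your version uses only two surrogate tuples rather than $n+1$, does not need part~\ref{item:prop_solution_map_sum} as input to part~\ref{item:prop_solution_map_characterization}, and avoids the affine perturbation; the price is that it relies on set-valued, restricted-domain operators, whereas the paper's test operators are single-valued affine maps. One small stylistic remark: the induction along the causal ordering that you flag as the "main technical point" is not really needed here — once one observes that $(\bz,\bx^+,\bu^+,\ba^+)$ satisfies the inclusion system \eqref{eq:abstract_fs_inclusion} with $\bA$ replaced by $\bar\bA$ (a direct verification of membership, $a_i^+\in\bar A_i(x_i^+)$), uniqueness of the solution of that system already gives $\bar T(\bz)=T(\bz)=\bz$, which is exactly the one-line "by construction" argument in the paper.
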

\begin{proof}
Fix $\cH$, $A\in \cA_n$ and $C \in \cC_m$ with $\zer(A_1+\dots +A_n + C_1 + \dots + C_m) \neq \emptyset$. Let $\bz \in \Fix(T)$, and $\bx^+, \ba^+, \bu^+$ be the elements obtained by evaluation of $T$, i.e., those solving \eqref{eq:abstract_fs_inclusion}. Consider now for $j \in \llbracket0, n\rrbracket$ the tuples of operators $A^{(j)}\in \cA_n$ and $C^{(j)}\in \cC_m$ defined as follows. Set $C_i^{(j)}(x) = u_i^+$ for all $i \in \llbracket1, m\rrbracket$ and $j \in \llbracket0, n\rrbracket$. Regarding $A^{(j)}$, set, for all $x \in \cH$ and all $i \in \llbracket1, n\rrbracket$,
\begin{equation}
    A^{(0)}_i(x) := a_i^+, \quad \text{and for all $j \in \llbracket1, n\rrbracket$} \quad A^{(j)}_i(x):= \begin{cases}
        a_i^+ & \text{if $i \neq j$},\\
        a_i^+ + x - x_i^+ & \text{if $j=i$}.
    \end{cases}
\end{equation}
Let $T^{(j)}$ be the operator with respect to $A^{(j)}$ and $C^{(j)}$. By construction, $\bz \in \Fix(T^{(j)})$ for all $j \in \llbracket 0, n\rrbracket$ since $\bz, \bx^+, \bu^+$ and $\ba^+$ still solve \eqref{eq:abstract_fs_inclusion} at $\bz$ with $\bA$ replaced by $\bA^{(j)}:=\diag(A^{(j)})$, or, in other words, $\bx^{+}$, $\ba^{+}$ and $\bu^{+}$ obtained via evaluation of $T^{(j)}(\bz)$ all coincide for each $j\in \llbracket0, n\rrbracket$.

Consider the case $j=0$. By Item \ref{item:zero_empty_iff_fix_empty} in Definition \ref{def:FPE}, as $\Fix(T^{(0)}) \neq \emptyset$ we must have $\sum_{i=1}^n a_i^+ + \sum_{i=1}^m u_i^+ = 0$, which shows Item \ref{item:prop_solution_map_sum}. Consider now the case $j \in \llbracket 1, n\rrbracket$. From Item \ref{item:fpe_solution_map_solves_pb} in Definition \ref{def:FPE}, $x^\star:=\mathfrak{s}(\bz)=s_z^T \bz + s_x^T \bx^+  + s_u^T \bu^+$ must solve for all $j \in \llbracket1, n\rrbracket$
\begin{equation}
    0 \in \sum_{i=1}^n A^{(j)}_i(x^\star)+ \sum_{i=1}^m C^{(j)}_i(x^\star) = \sum_{i=1}^na_i^+ + \sum_{i=1}^m u_i^+ + (x^\star - x_j^+) = x^\star -x_j^+, 
\end{equation}
which shows Item \ref{item:prop_solution_map_characterization} and concludes the proof.
\end{proof} 

Proposition \ref{prop:solution_map} shows that if $\bz \in \Fix(T)$ then $x^+_1 = \dots = x_n^+$ and $x^\star:=x_1^+$ solves \eqref{eq:Inclusion}. Therefore, we shall set aside the notion of solution map, as it is from now on implicitly encoded in the operator itself, without requiring any further operation. 

\subsection{The case of pure resolvent splitting}

To arrive at the necessary and sufficient conditions for \eqref{eq:abstract_fs_block} to have the fixed-point encoding property, we start by providing necessary conditions for fixed-point encoding of pure \emph{resolvent} splitting methods with minimal lifting, i.e., when $m = 0$. In this case, a frugal splitting operator $T$ \eqref{eq:abstract_fs_block} is referred to as a \emph{frugal resolvent splitting operator} and    reduces to:
    \begin{equation}
        \label{eq:Block_Resolvent}
        \bz^+:= T(\bz), \quad \text{with} \quad \left\{
        \begin{aligned}   
        &\bx^+ = J_{\Gamma\bA}(\Gamma L\bx^+ +\Gamma M\bz ),\\
        &\bz^{+} = U\bz -V\bx^+.
        \end{aligned}\right.
    \end{equation}

\begin{lemma}
    \label{lemma:FPE_resolvent_necessary}
    Let $T$ be a frugal resolvent splitting operator according to \eqref{eq:Block_Resolvent} with minimal lifting. Suppose that $T$ is a fixed-point encoding. Then, the following hold:
\begin{enumerate}[label=(\roman*)]
    \item \label{lemma:FPE_resolvent_necessary:V}$\Null(V) = \Ran(\one)$,
    \item \label{lemma:FPE_resolvent_necessary:U}$U = I$,
    \item \label{lemma:FPE_resolvent_necessary:gamma}$\one^T\left(\Gamma^{-1}-L\right)\one = 0$,
    \item \label{lemma:FPE_resolvent_necessary:M}$\Null(M^T) = \Ran(\one)$.
\end{enumerate}
\end{lemma}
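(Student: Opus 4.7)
The plan is to restrict to the scalar setting $\cH = \R$, which is legitimate since Definition~\ref{def:FPE} must hold for every real Hilbert space, and to probe the unknowns through a single parametric family of maximal monotone operators on $\R$: given arbitrary vectors $\bw,\ba\in\R^n$, take $A_i(x) := a_i + x - w_i$. Each such $A_i$ is a shifted identity, hence maximal monotone with $A_i(w_i) = a_i$, and $\sum_{i=1}^n A_i$ has the unique zero $x^\star = \frac{1}{n}(\one^T\bw - \one^T\ba)$. Thus by Definition~\ref{def:FPE}\ref{item:zero_empty_iff_fix_empty}, the fixed-point set is nonempty for every such choice, and at any $\bz \in \Fix(T)$ Proposition~\ref{prop:solution_map} forces $x_1^+ = \dots = x_n^+ = x^\star$ and $a_i^+ = a_i$, so that \eqref{eq:abstract_fs_inclusion} reduces to the linear constraints
\begin{equation*}
(I-U)\bz = x^\star V\one,\qquad M\bz = x^\star(\Gamma^{-1}-L)\one + \ba.
\end{equation*}
Varying $(\bw,\ba)$ in this family is essentially the only degree of freedom needed.

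I would then prove the conclusions in the order (ii), (iv), (i), (iii). For (ii) and (iv): take $\bw = 0$ and any $\ba \in \one^\perp$, so $x^\star = 0$ and the constraints become $(I-U)\bz = 0$, $M\bz = \ba$. Letting $\ba$ run over all of $\one^\perp$ yields $M(\Null(I-U)) \supseteq \one^\perp$, which by the dimension bound $\dim\Null(I-U) \leq n-1$ forces $\Null(I-U) = \R^{n-1}$ (hence $U=I$) and $\Ran(M) = \one^\perp$ (hence $\Null(M^T) = \Ran(\one)$). For $V\one = 0$: take $\bw = x^\star\one$ for some $x^\star \neq 0$ and $\ba = 0$; the first constraint, with $U=I$ already known, reads $0 = x^\star V\one$. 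For the reverse inclusion $\Null(V)\subseteq\Ran(\one)$: given $\bw \in \Null(V)$, choose $\ba$ with $\one^T\ba = -\one^T(\Gamma^{-1}-L)\bw$; by (iv) there exists $\bz$ with $M\bz = (\Gamma^{-1}-L)\bw + \ba$. A direct check shows that $\bx^+ = \bw$, $\ba^+ = \ba$ solve the inclusion part of \eqref{eq:abstract_fs_inclusion} at this $\bz$ and that $\bz^+ = \bz - V\bw = \bz$, placing $\bz \in \Fix(T)$; Proposition~\ref{prop:solution_map} then forces all components of $\bw$ to coincide, i.e., $\bw \in \Ran(\one)$. Finally for (iii), apply the same construction with $\bw = \one$: the unique zero of $\sum_i A_i$ is then $1 + \frac{1}{n}\one^T(\Gamma^{-1}-L)\one$, while Proposition~\ref{prop:solution_map} asserts $\mathfrak{s}(\bz) = 1$; equating yields the claim.

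The main technical point is the verification inside the $\Null(V)\subseteq\Ran(\one)$ construction, on which (iii) also relies: I must confirm that the constructed $\bz$ is a fixed point \emph{and} that evaluating $T(\bz)$ actually returns the prescribed $\bx^+ = \bw$, $\ba^+ = \ba$. Unpacking the resolvent equation, $\bw = J_{\Gamma\bA}(\Gamma L\bw + \Gamma M\bz)$ is equivalent to the identity $\Gamma L\bw + \Gamma M\bz = \bw + \Gamma\ba$, i.e., $M\bz = (\Gamma^{-1}-L)\bw + \ba$, which is exactly how $\bz$ and $\ba$ were chosen, together with $a_i \in A_i(w_i)$ holding by design. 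The invertibility of $\Gamma^{-1}-L$ (strictly lower triangular plus positive diagonal) is implicit but essential for the linear-algebraic manipulations, as is the prior availability of (iv) to guarantee solvability of $M\bz = (\Gamma^{-1}-L)\bw + \ba$ once $\ba$ is chosen to align with $\Ran(M)$. Beyond this bookkeeping, the remainder is a dimension count and elementary linear algebra, with no fixed-point or nonexpansiveness machinery invoked.
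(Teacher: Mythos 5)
Your proof is correct and takes a mildly different route from the paper. The paper deploys a different test family per item---subdifferentials of point indicators $A_i = \partial\iota_{\{x_i\}}$ for (i) and (ii), constant operators for (iii), and affine operators $A_i(x)=x+v_i^\star$ only for (iv)---proves the items in the stated order, and invokes Proposition~\ref{prop:solution_map} only in the proof of (iv). You instead take the affine family $A_i(x) := a_i + x - w_i$ as a universal probe, lean on Proposition~\ref{prop:solution_map} throughout, and re-order as (ii)+(iv) jointly, then (i), then (iii). Your joint treatment of (ii) and (iv) via the containment $M(\Null(I-U)) \supseteq \one^\perp$ followed by one dimension count is genuinely tidier than the paper's separate contradiction argument for (ii) plus its independent argument for (iv); and deriving (iii) from $\mathfrak{s}(\bz)=1$ matched against the explicit zero of $\sum_i A_i$ is a neat alternative to the paper's constant-operator choice. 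Two cosmetic corrections to your setup paragraph: from $\bz = U\bz - V\bx^+$ the first constraint should read $(I-U)\bz = -x^\star V\one$ (sign), and the blanket claim $a_i^+ = a_i$ at a fixed point is only valid when $x^\star = w_i$, since in fact $a_i^+ = A_i(x^\star) = a_i + x^\star - w_i$; the correct general constraint carries the extra term $x^\star\one - \bw$ on the right-hand side of the second equation. Neither issue propagates, because every instance you actually run either has $\bw = x^\star\one$ (the $\bw=0$ and $\bw=x^\star\one$ cases) or bypasses the general reduction in favor of the direct verification that $(\bz,\bw,\ba)$ solves \eqref{eq:abstract_fs_inclusion} (the $\Null(V)\subseteq\Ran(\one)$ step and the (iii) step, where $x^\star=w_i$ is precisely the conclusion rather than a hypothesis).
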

\begin{proof}
In this proof, we argue with $\cH=\R$.

Item~\ref{lemma:FPE_resolvent_necessary:V}:           Since $V\in\R^{(n-1)\times n}$, $\Null(V)\neq \{0\}$. Let $\bx=(x_1,\ldots,x_n)\in \Null(V)$ and take
          \begin{equation}
          	A_i := \partial\iota_{\{x_i\}}, \quad \text{for} \ i \in \llbracket1, n\rrbracket.
          \end{equation}
Observe that $\bz=0$ is a fixed point of $T$. Since $\Fix(T)\neq \emptyset$,  we get $\zer(\sum_{i=1}^nA_i)\neq \emptyset$, and thus $\bx \in \Ran(\one)$. Therefore, $\Null(V)=\Ran(\one)$.

Item~\ref{lemma:FPE_resolvent_necessary:U}: Assume by contradiction that $U \neq I$. Pick $\bv:=(I-U)\bz^\star \neq 0$ and $\bx^\star \notin \Null( V)$ such that $- V \bx^\star = \bv$. This is indeed possible since $V$ is full rank by \ref{lemma:FPE_resolvent_necessary:V}. Consider now $A_i = \partial\iota_{\{x^\star_i\}}$ for $i \in \llbracket 1,n \rrbracket$. Then, $\bz^\star\in\Fix(T)$ but $\zer\left(\sum_{i=1}^n A_i\right) = \emptyset$, which yields the desired contradiction.
            
	Item~\ref{lemma:FPE_resolvent_necessary:gamma}: Let us now consider constant operators defined by:
      \begin{equation}
      	A_i(x) = v^\star_i, \quad \text{for all} \ i \in \llbracket 1, n \rrbracket, \quad \text{with} \quad \bv^\star = (v^\star_1, \dots, v^\star_n) = -(\Gamma^{-1}-L)\one \in \real^n. 
      \end{equation}
      Then $\bz^\star := 0 \in \Fix(T)$ since, with $\bx^\star := \one$, we get by conditions \cref{lemma:FPE_resolvent_necessary:V,lemma:FPE_resolvent_necessary:U}
that:
      \begin{equation}
          \begin{aligned}
              0&=\bz^\star = \bz^\star - V\bx^\star = 0-0 = 0,\\
              0&=M\bz^\star=(\Gamma^{-1}-L+\bA)(\bx^\star) = (\Gamma^{-1}-L-(\Gamma^{-1}-L))\one=0.
          \end{aligned}
      \end{equation}
      Thus, $\zer(\sum_{i=1}^n A_i)\neq \emptyset$, i.e., $\one^T(\Gamma^{-1}-L)\one = 0$.
      
  Item~\ref{lemma:FPE_resolvent_necessary:M}:
  Given arbitrary $\bv^\star = (v_1^\star, \dots, v_n^\star)\in \R^n$ such that $\one^T \bv^\star = 0$ define the operators as:
  \begin{equation}
	A_i(x) =x + v_i^\star,  \quad  \text{for} \ i \in \llbracket 1, n\rrbracket \text{ and each } x\in\R.
  \end{equation}
   Clearly, for any such choice of $\bv^\star$, $\zer (\sum_{i=1}^n A_i) = \{0\}\neq\emptyset$, implying through the fixed-point encoding assumption that the fixed-point set is nonempty. By Proposition~\ref{prop:solution_map}, we know that $\bx^\star=0$ and by \eqref{eq:abstract_fs_inclusion}, we know that $\bz^\star$ is a fixed point if and only if it satisfies: 
    \begin{equation}\label{eq:Inclusion_FPE_M}
   M\bz^\star=(\Gamma^{-1}-L+\bA)(\bx^\star) = 0 + \bv^\star,
   \end{equation}

   As $\bv^\star\in\Ran(\one)^\perp$ is an arbitrary point in $\Ran(\one)^\perp$, and $\Fix(T)\neq \emptyset$ by fixed-point encoding, $\Ran(M)\supseteq \Ran(\one)^\perp$, which is equivalent to $\Null(M^T)\subseteq \Ran(\one)$. Since $M^T\in \R^{(n-1)\times n}$ has a non-trivial null space, we conclude that $\Null(M^T)=\Ran(\one)$.
  \end{proof}

\begin{remark}
	Observe that the counterexample in Item~\ref{lemma:FPE_resolvent_necessary:M} and specifically \eqref{eq:Inclusion_FPE_M} can be used to show that if $T$ is a fixed-point encoding, it cannot be defined on a $d$-fold product space with $d<n-1$, providing another explicit proof of \cite[Theorem 1]{mt23}.
\end{remark}

\subsection{The general case}

We now deal with the general case with $m> 0$. We first note that a frugal splitting method with minimal lifting, for which $\bC = 0$, simplifies to a pure resolvent splitting, also with minimal lifting, so the conditions of Proposition~\ref{lemma:FPE_resolvent_necessary} 
are necessary also for fixed-point encoding frugal splitting methods.
\begin{lemma}
    \label{lemma:FPE_splitting_necessary}
    Let $T$ be a frugal splitting operator with minimal lifting according to \eqref{eq:abstract_fs_block} and satisfying the fixed-point encoding property. Then, all the conditions of \cref{lemma:FPE_resolvent_necessary} hold and additionally:
    \begin{enumerate}[label=(\roman*)]
    \item  \label{lemma:FPE_splitting_necessary:HK}$H^T\one = K\one = \one$,
    \item   \label{lemma:FPE_splitting_necessary:Z}$Z^{(1)}$, $Z^{(2)}$ and $Z^{(3)}$ are zero.
\end{enumerate}
\end{lemma}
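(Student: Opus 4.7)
The plan is to proceed in three steps, leveraging Lemma~\ref{lemma:FPE_resolvent_necessary} and Proposition~\ref{prop:solution_map}. First, the claim that all conditions of Lemma~\ref{lemma:FPE_resolvent_necessary} persist would follow by specializing to $\bC \equiv 0$: the system \eqref{eq:abstract_fs_block} collapses to \eqref{eq:Block_Resolvent} with $\bu^+ = 0$, while the fixed-point encoding property is preserved because $\zer(\sum_i A_i + \sum_j C_j) = \zer(\sum_i A_i)$. This yields $U = I$, $\Null(V) = \Ran(\one)$, $\one^T(\Gamma^{-1} - L)\one = 0$, and $\Null(M^T) = \Ran(\one)$, so in particular $\Ran(M) = \Ran(\one)^\perp$.

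Next, to force $Z^{(3)} = 0$ and $H^T\one = \one$, I would take $\cH = \real$ and constants $A_i(x) \equiv a_i$, $C_j(x) \equiv c_j$ with $\one^T\ba + \one^T \boldsymbol{c} = 0$, so that the zero set equals $\real$ and, by FPE, a fixed point $\bz^\star$ exists. Proposition~\ref{prop:solution_map} gives $\bx^+ = x^\star\one$, $\ba^+ = \ba$, $\bu^+ = \boldsymbol{c}$. Since $\bz^+ = \bz^\star$ at the fixed point, the $\bz$-update collapses (using $U = I$ and $V\one = 0$) to $Z^{(3)}\boldsymbol{c} = 0$; varying $\ba$ to compensate any $\boldsymbol{c} \in \real^m$ then gives $Z^{(3)} = 0$. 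Meanwhile, solvability of the resolvent equation in $\bz$ requires $(\Gamma^{-1}-L)x^\star\one + \ba + H\boldsymbol{c} \in \Ran(\one)^\perp$; using $\one^T(\Gamma^{-1}-L)\one = 0$ and the sum condition, this reduces to $(H^T\one - \one)^T\boldsymbol{c} = 0$ for all $\boldsymbol{c} \in \real^m$, yielding $H^T\one = \one$.

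For the remaining claims, I would pick $y \in \real$, $\bw \in \real^m$, $k \in \llbracket 1,n\rrbracket$, and set $A_k := \partial\iota_{\{y\}}$, $A_i \equiv 0$ for $i \neq k$, and $C_j(x) := x - w_j$, so that the unique zero of the inclusion is $x^\star = y$. This pins down $\bx^+ = y\one$, $u_j^+ = y - w_j$, and $\ba^+$ supported only at index $k$ with value $\one^T\bw - my$. Consistency of the resolvent equation---which is automatic thanks to Step 2---allows one to solve for $\bz$ uniquely via the left inverse $M^\dagger := (M^T M)^{-1} M^T$, with explicit $k$-dependence through the basis vector $e_k \in \real^n$. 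Substituting this $\bz$ back into the $C$-equation produces a linear identity in $y,\bw$ whose coefficients depend on $k$. The crucial move is to compare this identity across two different choices of $k$: the only $k$-dependent term turns out to be $Z^{(1)} M^\dagger e_k$, which must therefore be independent of $k$, forcing $Z^{(1)} M^\dagger = \xi\one^T$ for some $\xi \in \real^m$. Since $M^T\one = 0$, this yields $Z^{(1)} = Z^{(1)} M^\dagger M = 0$. With $Z^{(1)} = 0$, the $\bw$-coefficient of the identity collapses to $Z^{(2)}\bw = 0$ for all $\bw$, so $Z^{(2)} = 0$, and finally the $y$-coefficient collapses to $K\one = \one$.

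The principal difficulty will be the bookkeeping of Step 3, both in verifying consistency of the resolvent equation for these test operators and in disentangling the joint $y$-, $\bw$-, and $k$-dependencies so as to isolate $Z^{(1)}$ first. The conceptual device of using the \emph{index} $k$ of the indicator constraint as an extra probing parameter is what allows $Z^{(1)}$ to decouple cleanly; once that has been done, $Z^{(2)} = 0$ and $K\one = \one$ drop out by straightforward linear algebra.
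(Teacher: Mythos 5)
Your Steps 1 and 2 are correct, and Step 2 is in fact a mild improvement over the paper: you unify the paper's two separate test problems for $Z^{(3)}=0$ and $H^T\one=\one$ into a single family of constant operators $A_i\equiv a_i$, $C_j\equiv c_j$ with $\one^T\ba + \one^T\boldsymbol{c}=0$, using that constant $C_j$ forces $\bu^+=\boldsymbol{c}$ irrespective of the algorithm matrices. Both arguments extract the same two conclusions cleanly from the $\bz$-update and from the $\one^T$-component of the resolvent equation.

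Step 3, however, has a genuine gap at its very first move. You assert that the fixed point has $u_j^+ = y - w_j$, but this does not follow from the fixed-point encoding property. What the algorithm computes is
\begin{equation*}
u_j^+ = C_j\bigl(K_j\bx^+ + Z^{(2)}_j\bu^+ + Z^{(1)}_j\bz^\star\bigr),
\end{equation*}
not $C_j(y)$. Proposition~\ref{prop:solution_map} only forces $\bx^+ = y\one$ and the \emph{sum} $\sum_i a_i^+ + \sum_j u_j^+ = 0$; it says nothing about the individual $u_j^+$. The identity $u_j^+ = y - w_j$ holds exactly when the input to $C_j$ equals $y$, i.e., when $Z^{(1)}=0$, $Z^{(2)}=0$, and $K\one=\one$ --- which is precisely what you are trying to prove. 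So the linear identity you derive by substituting $\bu^+ = y\one - \bw$ into the $C$-equation, and the subsequent $k$-comparison isolating $Z^{(1)}M^\dagger e_k$, both rest on a circular premise.

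Working honestly, the $C$-equation at the fixed point yields $(I-Z^{(2)})\bu^+ = yK\one + Z^{(1)}\bz^\star - \bw$, and after eliminating $\bz^\star$ via the resolvent equation and $a_k^+ = -\one^T\bu^+$, one obtains a linear system of the form $B_k\,\bu^+ = y\bb - \bw$ with $B_k$ depending on $k$ and on the unknown matrices. Knowing only that this system has a solution (since FPE guarantees a fixed point) does not let you read off the desired vanishing conditions. The paper sidesteps this by reversing the logic: for each target identity, it \emph{constructs} an explicit fixed point $(\bz^\star, \bx^\star, \bu^\star, \ba^\star)$ for a tailor-made test problem (choosing $\bz^\star$ freely and defining the constant parts of $A_j$ so that $\bz^\star$ is forced to be a fixed point), and then uses Item~\ref{item:fpe_solution_map_solves_pb} of Definition~\ref{def:FPE} to argue that $\bx^\star$ must solve the inclusion --- which fails unless the relevant entry of $Z^{(1)}$, $Z^{(2)}$, or $K\one-\one$ vanishes. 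To repair Step 3 you would need a similar explicit construction; the constant-operator trick from Step 2 does not suffice here because constant $C_j$ wash out any dependence on $Z^{(1)}$, $Z^{(2)}$, and $K$.
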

\begin{proof} Let $C_i= 0$ for $i\in\llbracket 1, m\rrbracket$. Then \eqref{eq:abstract_fs_block} reduces to \eqref{eq:Block_Resolvent}, from which we conclude that the conditions \cref{lemma:FPE_resolvent_necessary} are necessary also for general fixed-point encoding as in \eqref{eq:abstract_fs_block}. Pick $\cH=\R$.

Item~\ref{lemma:FPE_splitting_necessary:HK}:
 Let $\bA = 0$ and let $C_i(x) = u^\star_i$ for $i\in \llbracket 1, m\rrbracket$ for any $\bu^\star\in\real^{m}$. The corresponding inclusion problem is solvable if and only if $\one^T\bu^\star = 0$ and there exists a fixed point if and only if there exists $\bz^\star \in \real^{n-1}$ and $\bx^\star\in \Ran(\one)$ satisfying
 \begin{equation}
        \left(\Gamma^{-1}-L+\bA\right) (\bx^\star) + H\bu^\star \in \Ran(M), \quad \text{i.e.,}\quad 0 + \one^TH\bu^\star = \one^TM = 0,
 \end{equation}
 due to the necessary conditions of Lemma~\ref{lemma:FPE_resolvent_necessary}.
 Item \ref{item:zero_empty_iff_fix_empty} in the fixed-point encoding property implies that $\one^TH\bu^\star = 0$ if and only if $\one^T\bu^\star = 0$, from which $\one^TH= \one^T$.
 
 Consider instead $C_i(x) = \beta_i\left(x - K_i\one\right)$ for a fixed $i\in \llbracket 1, m\rrbracket$, $C_j = 0$ for $j\in \llbracket 1, m \rrbracket \setminus \{i\}$ and let $A_j(x) = -(\Gamma^{-1}-L)_j\one x$ for $j\in \llbracket 1, n\rrbracket$ and all $x \in \R$. Observe that, using Lemma \ref{lemma:FPE_resolvent_necessary}\ref{lemma:FPE_resolvent_necessary:gamma}, $x^\star = K_i\one$ is the unique solution to the corresponding inclusion problem. Further, $(\bar \bz, \bar \bx,\bar \bu, \bar \ba)=(0, \one, 0, 0)$ solves \eqref{eq:abstract_fs_inclusion} at $\bar \bz$, since $\bar \bz^+ = U\bz-V\bx^+-Z^{(3)} \bu^+ = 0$,
 \begin{equation}
         \bar \bu = \bC(K\one + 0 + 0) = 0, \quad \text{and} \quad  0 = \left(\Gamma^{-1}-L + \bA\right)(\bar \bx) = M \bar\bz - H\bar \bu = 0.
\end{equation}
In particular, $\bar \bz \in \Fix(T)$. Therefore, Proposition \ref{prop:solution_map}\ref{item:prop_solution_map_characterization} yields $x^\star=1$, hence $K_i\one = 1$. As $i$ was arbitrary, we get $K\one = \one$.
     
Item~\ref{lemma:FPE_splitting_necessary:Z}: Let $A_i = \iota_{\{x^\star\}}$ for $i\in \llbracket 1,n\rrbracket$ and $x^\star\in \real$. Pick any $i\in \llbracket 1, m\rrbracket$, let $C_i = 1$  and let $C_j = 0$ for $j$ in $ \llbracket 1,m\rrbracket\setminus \{i\}$. The corresponding inclusion problem is solvable but there cannot exist a fixed point $\bz^\star \in \R^{n-1}$ with $\bx^\star =  \one x^\star$, unless
\begin{equation}
    \bz^\star = \bz^\star - V\bx^\star - Z^{(3)}_i, \quad \text{i.e.,} \quad Z^{(3)}_i = 0,
\end{equation}
where we used Lemma \ref{lemma:FPE_resolvent_necessary}\ref{lemma:FPE_resolvent_necessary:V}.
Since the index $i$ was arbitrary, we get $Z^{(3)} = 0$. Let instead $C_i = \beta_iI$ for some $i\in \llbracket 1, m\rrbracket$ and let $C_j = 0$ for $j\in \llbracket 1, m\rrbracket \setminus \{i\}$. For any $\bz^\star\in \real^{n-1}$ let $A_j(x):=(M\bz^\star - \beta_iH\diag(e_i)Z^{(1)}\bz^\star)_j$ for $j\in \llbracket 1,n\rrbracket$. We note that $(\bz^\star, \bx^\star, \bu^\star, \ba^\star):=(\bz^\star, 0, \beta_i\diag(e_i)Z^{(1)}\bz^\star, \bA(0))$ solves \eqref{eq:abstract_fs_inclusion} at $\bz^\star$ as
\begin{equation}
    \begin{aligned}    
  \left(\Gamma^{-1}-L + \bA\right)(0)&= \bA(0) = M\bz^\star -  \beta_iH\diag(e_i)Z^{(1)}\bz^\star= M\bz^\star - H\bu^\star, \\
  \bu^\star &= \bC(0 + Z^{(1)}\bz^\star + 0) = \beta_i\diag(e_i)Z^{(1)}\bz^\star.
    \end{aligned}
\end{equation}
Thus, $\bz^\star \in \Fix(T)$. However, $\bx^\star = 0$ does not solve the corresponding inclusion unless
\begin{equation}
    0 = \sum_{j=1}^nA_j(0) + \sum_{j=1}^mC_j(0) = \one^TM\bz^\star - \beta_i\one^TH\diag(e_i)Z^{(1)}\bz^\star = e_i^TZ^{(1)}\bz^\star.
\end{equation}
Since $\bz^\star$ and $i$ were arbitrary, we conclude that $Z^{(1)} = 0$. For the last counterexample, let $C_i(x) = 1$ and $C_j(x) = \beta_j x$ for all $x \in \R$ and some fixed $i<j$, $i,j\in \llbracket 1, n\rrbracket$, and let $C_h = 0$ for $h\in \llbracket 1, m\rrbracket \setminus \{i,j\}$. Let $\bu^\star := e_i + \beta_jZ^{(2)}_{ij}e_j$ and let $A_i(x) = -H_i\bu^\star$ for $i\in \llbracket 1,n\rrbracket$. We note that $(\bz^\star, \bx^\star, \bu^\star, \ba^\star):=(0, 0, e_i + \beta_jZ^{(2)}_{ij}e_j, -H\bu^\star)$ solves \eqref{eq:abstract_fs_inclusion} at $\bz^\star$ since
\begin{equation}
    \left(\Gamma^{-1}-L + \bA\right)(0) = -H\bu^\star = M\bz^\star - H\bu^\star, \quad \text{and} \quad \bu^\star=\mathbf{C}(0 + Z^{(2)}\bu^\star + 0).
\end{equation}
However, since $\bx^\star = 0$ does not solve the corresponding inclusion problem unless $0 = \sum_{i=1}^mC_i(0) + \sum_{i=1}^nA(0) = 1 - \one^TH\bu^\star = 1-1-\beta_jZ^{(2)}_{ij}$, fixed-point encoding is violated unless $Z_{ij}^{(2)} = 0$ whenever $i < j$. Note that $Z^{(2)}_{ij} = 0$ also for $i\geq j$ from Proposition~\ref{prop:causal_iff}. Thus, $Z^{(2)} = 0$.
\end{proof}

We next show that the necessary conditions are also sufficient, and the fixed-point operator with matrices satisfying the conditions of \cref{lemma:FPE_splitting_necessary} and Proposition~\ref{prop:causal_iff} thus gives an exact characterization of the methods satisfying \eqref{prop:splitting}, \eqref{prop:frugal}, \eqref{prop:minlift}, and $\eqref{prop:FPE}$.
\begin{theorem}
    \label{thm:FPE_iff}
    A frugal splitting operator with minimal lifting, as parameterized by \eqref{eq:abstract_fs_block} has the fixed-point encoding property if and only if it can be represented as
    \begin{equation}\label{eq:frugal_splitting_fpe}
    	\bz^+:= T(\bz), \quad \text{with} \quad \left\{\begin{aligned}
    		&\bx^+ = J_{\Gamma \bA}(\Gamma L \bx^+ - \Gamma H \bC(K \bx^+) + \Gamma M \bz),\\
    		&\bz^{+} = \bz -V\bx^+,
    	\end{aligned}\right.
    \end{equation}
	where $M^T, V\in \R^{(n-1)\times n}$, $\Gamma := \diag(\gamma) \succ 0$, $L\in \real^{n\times n}$, and $H, K^T \in \R^{n\times m}$ satisfy:
	\begin{enumerate}[label=(\roman*)]
		\item \label{thm:FPE_iff:VM} $\Null( M^T) = \Null( V) = \Ran(\one)$,
		\item \label{thm:FPE_iff:KH} $H$ and $K$ are causal in the sense of Definition~\ref{def:causal_pair} and satisfy $H^T\one = K\one = \one$,
		\item\label{thm:FPE_iff:Gamma} $L\in \real^{n\times n}$ is strictly lower triangular such that $\one^T\left(\Gamma^{-1}-L\right)\one = 0$.
	\end{enumerate}
        In this case, if $\bz \in \Fix(T)$, $\bx^\star=\one x^\star$ and $x^\star$ solves \eqref{eq:Inclusion}.
\end{theorem}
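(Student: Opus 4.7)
The plan is to prove the two directions separately, leveraging the machinery already in place. For the necessity direction (fixed-point encoding $\Rightarrow$ representation \eqref{eq:frugal_splitting_fpe}), essentially everything needed is already packaged in earlier results: Proposition~\ref{prop:causal_iff} forces $L$ and $Z^{(2)}$ to be strictly lower triangular and $H, K^T$ to be a causal pair; Lemma~\ref{lemma:FPE_resolvent_necessary} supplies $U = I$, $\Null(V) = \Null(M^T) = \Ran(\one)$, and $\one^T(\Gamma^{-1} - L)\one = 0$; and Lemma~\ref{lemma:FPE_splitting_necessary} supplies $H^T\one = K\one = \one$ together with $Z^{(1)} = Z^{(2)} = Z^{(3)} = 0$. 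Substituting these identities into \eqref{eq:abstract_fs_block} collapses the update rule exactly to \eqref{eq:frugal_splitting_fpe}.

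For the sufficiency direction, I would first verify clause \ref{item:fpe_solution_map_solves_pb} of Definition~\ref{def:FPE}, which also yields the final sentence of the theorem. Fix $\bz \in \Fix(T)$; the $\bz$-update gives $V \bx^+ = 0$, so $\bx^+ \in \Null(V) = \Ran(\one)$, i.e.~$\bx^+ = \one x^\star$ for some $x^\star \in \cH$. Since $K\one = \one$, we get $\bC(K\bx^+) = (C_1(x^\star),\dots,C_m(x^\star))^T$, and each $a_i^+ \in A_i(x^\star)$. Applying $\one^T$ to the first equation of \eqref{eq:abstract_fs_inclusion} and using $\one^T(\Gamma^{-1} - L)\one = 0$, $\one^T M = 0$ (equivalent to $\Null(M^T) = \Ran(\one)$), and $\one^T H = \one^T$, everything collapses to $\sum_{i=1}^n a_i^+ + \sum_{j=1}^m C_j(x^\star) = 0$. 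This simultaneously shows that $x^\star$ solves \eqref{eq:Inclusion} and that $\Fix(T) \neq \emptyset \Rightarrow \zer(\sum A_i + \sum C_j) \neq \emptyset$.

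The converse implication $\zer(\sum A_i + \sum C_j) \neq \emptyset \Rightarrow \Fix(T) \neq \emptyset$ is the step I expect to be the main obstacle, as it requires constructing a fixed point from scratch. Given a solution $x^\star$ of \eqref{eq:Inclusion}, pick a selection $a_i^\star \in A_i(x^\star)$ with $\sum_{i=1}^n a_i^\star + \sum_{j=1}^m C_j(x^\star) = 0$, and set $\bx^\star := \one x^\star$, $\ba^\star := (a_1^\star,\dots,a_n^\star)^T$, $\bu^\star := \bC(\one x^\star)$. I would then solve $M\bz^\star = (\Gamma^{-1} - L)\bx^\star + \ba^\star + H\bu^\star$ for $\bz^\star \in \cH^{n-1}$. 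Existence of such a $\bz^\star$ requires the right-hand side to lie in $\Ran(M)$, which under the identification $M \leftrightarrow M \kron \Id$ equals $(\Null(M^T))^\perp = \{\bv \in \cH^n : \one^T \bv = 0\}$. Computing $\one^T$ of the right-hand side and invoking the three structural identities on $L$, $H$, $M$ reduces it to $\sum a_i^\star + \sum C_j(x^\star) = 0$, so the solvability condition holds. Finally, since $V \bx^\star = 0$, the $\bz$-update yields $\bz^\star - V\bx^\star = \bz^\star$, confirming $\bz^\star \in \Fix(T)$ and concluding the proof.
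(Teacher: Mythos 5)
Your proof is correct and follows essentially the same route as the paper: necessity bundles Proposition~\ref{prop:causal_iff} with Lemmas~\ref{lemma:FPE_resolvent_necessary}--\ref{lemma:FPE_splitting_necessary}, and sufficiency reduces to applying $\one^T$ to the first equation of \eqref{eq:abstract_fs_inclusion} and exploiting the three structural identities, exactly as the paper does via the equivalence $\Fix(T)\neq\emptyset \iff \Ran(M)\cap(\Gamma^{-1}-L+\bA+H\bC K)(\one x^\star)\neq\emptyset$. The only cosmetic difference is that you spell out the two directions of clause~\ref{item:zero_empty_iff_fix_empty} separately where the paper compresses them into one chain of equivalences; also a small slip worth noting is that $\one^T M=0$ is only equivalent to $\Ran(\one)\subseteq\Null(M^T)$, not to the full equality, though the implication you actually use is valid.
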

\begin{proof}
    Necessity follows directly from Proposition~\ref{prop:causal_iff} and \cref{lemma:FPE_splitting_necessary}. For sufficiency, assume that conditions \cref{thm:FPE_iff:VM}--\cref{thm:FPE_iff:Gamma} hold.
    Then by \cref{thm:FPE_iff:VM}, there exists a fixed point of $T$ if and only if there exists $x^\star\in \cH$ such that
    \begin{equation}
        \label{eq: FPE-intersection}
        \Ran(M)\cap \left(\Gamma^{-1}-L+\bA + H\bC K\right)(\one x^\star )\neq \emptyset.
    \end{equation}
    Since $\Ran(M) = (\one^T)^\perp$ and Items \ref{thm:FPE_iff:KH} and \ref{thm:FPE_iff:Gamma} hold, \eqref{eq: FPE-intersection} is in turn equivalent to
    \begin{equation}
        0\in \one^T\left(\Gamma^{-1}-L+\bA+H\bC K\right)(\one x^\star) = \sum_{i=1}^nA_i(x^\star) + \sum_{i=1}^mC_i(x^\star).
    \end{equation}
    There thus exists a fixed point of $T$ if and only if the inclusion problem is solvable, and at a fixed point $\bz^\star$ we obtain $\bx^\star := \one x^\star$ for $x^\star$ solving the inclusion problem.
\end{proof}

\section{Necessary and sufficient conditions for nonexpansiveness}\label{sec:conditions_for_nonexpansivness}

Theorem~\ref{thm:FPE_iff} exactly captures the class of all frugal splitting methods with minimal lifting that have the fixed-point encoding property. This property guarantees that fixed points of the algorithm correspond to solutions of the underlying inclusion problem. However, an iteration of the form $\bz^{k+1}=T(\bz^k)$, where $T$ is an operator as in \eqref{eq:frugal_splitting_fpe}, does not necessarily find a fixed point and is therefore not useful. In this section, we will give necessary and sufficient conditions on the algorithm matrices for that $T$ is averaged nonexpansive, implying convergence to a fixed point. Instead of imposing structural constraints on the involved matrices, like fixed-point encoding does, it constrains the sizes of involved matrices in the form of a linear matrix inequality. Note that we treat nonexpansiveness with respect to the canonical norm in $\cH^{n-1}$. As we have shown in \cref{sec:NE-norm_and_equivalence}, this entails no loss of generality.

\subsection{The case of pure resolvent splitting}\label{sec:nonexp_resolvent}

As in Section~\ref{sec:Conditions_for_fixed-point-encoding}, we first tackle the case of pure resolvent splitting, i.e., with $m=0$. So let us consider \eqref{eq:frugal_splitting_fpe} with $\bC= 0$:
\begin{equation}\label{eq:frugal_resolvent_fpe}
	 \bz^+:= T(\bz), \quad \text{with} \quad \left\{\begin{aligned}
		&\bx^+ = J_{\Gamma \bA}(\Gamma L \bx^+ + \Gamma M \bz),\\
		&\bz^{+} = \bz -V\bx^+.
	\end{aligned}\right.
\end{equation}
We start with the following necessary condition:
\begin{lemma}\label{lem:M_equals_V}
	If $T$ as defined in \eqref{eq:frugal_resolvent_fpe} is nonexpansive, then
	\begin{equation}
		V= \alpha M^T, \quad \text{for some} \  \alpha \in \R_{++}.
	\end{equation}
\end{lemma}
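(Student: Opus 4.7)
The plan is to extract from the nonexpansiveness inequality a per-coordinate proportionality between the columns of $V$ and the rows of $M$, and then pin down a common positive scalar using the null-space identities from Theorem~\ref{thm:FPE_iff}. Since $T$ must be nonexpansive for every real Hilbert space and every admissible $\bA$, I would first reduce to $\cH = \R$ and then exploit the freedom to design $\bA$ componentwise.

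First, I would rewrite nonexpansiveness as follows: for any $\bz, \bz' \in \R^{n-1}$ with corresponding resolvent outputs $\bx^+, \bx^{+\prime}$ under a common $\bA$, setting $\delta\bz := \bz - \bz'$, $\delta\bx := \bx^+ - \bx^{+\prime}$, and $\delta\ba := M\delta\bz - (\Gamma^{-1} - L)\delta\bx$, the identity $T(\bz) = \bz - V\bx^+$ and $\|T(\bz) - T(\bz')\|^2 \leq \|\bz - \bz'\|^2$ together yield
\begin{equation*}
2 \langle V^T\delta\bz,\, \delta\bx\rangle \;\geq\; \|V\delta\bx\|^2,
\end{equation*}
while componentwise maximal monotonicity of $\bA$ imposes $\delta x_i \delta a_i \geq 0$ for each $i$. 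Conversely, any $(\delta\bx, \delta\bz)$ satisfying this componentwise constraint can be realized by a suitable diagonal $\bA$, for instance by choosing $A_i = \partial\iota_{\{0\}}$ on inactive coordinates so that the corresponding $a_i$'s remain free.

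Next, I would fix $k \in \llbracket 1, n \rrbracket$ and test the minimal perturbation $\delta\bx = e_k$. Strict lower triangularity of $L$ gives $((\Gamma^{-1} - L)e_k)_k = 1/\gamma_k$, so admissibility collapses to the single scalar condition $(M\delta\bz)_k \geq 1/\gamma_k$, while the inequality above becomes
\begin{equation*}
2\, v_k^T \delta\bz \;\geq\; \|v_k\|^2 \qquad \text{for every $\delta\bz$ with $m_k^T\delta\bz \geq 1/\gamma_k$},
\end{equation*}
where $v_k$ denotes the $k$th column of $V$ and $m_k$ the $k$th row of $M$ viewed as a vector in $\R^{n-1}$. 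A linear functional bounded below on a half-space must be a nonnegative multiple of the half-space's defining functional, and a Farkas-type argument thus forces $v_k = \alpha_k m_k$ with $\alpha_k \geq 0$.

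Finally, combining these per-column identities with the null-space conditions from Theorem~\ref{thm:FPE_iff} yields
\begin{equation*}
M^T (\alpha_1, \dots, \alpha_n)^T \;=\; \sum_{k=1}^n \alpha_k m_k \;=\; \sum_{k=1}^n v_k \;=\; V\one \;=\; 0,
\end{equation*}
so $(\alpha_1, \dots, \alpha_n) \in \Null(M^T) = \Ran(\one)$ and hence $\alpha_k \equiv \alpha$ for some common $\alpha \geq 0$. Because $\Null(V) = \Null(M^T) = \Ran(\one)$ implies $v_k, m_k \neq 0$ for every $k$, strict positivity $\alpha > 0$ follows, giving $V = \alpha M^T$. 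The main obstacle I anticipate is the realizability step: explicitly constructing an admissible diagonal $\bA$ producing the prescribed pair $(\delta\bx = e_k, \delta\bz)$. The indicator-normal-cone construction on the inactive coordinates, together with the strict lower triangularity of $L$, is the key ingredient that makes this work.
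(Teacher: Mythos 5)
Your proof is correct, and takes a genuinely different route to the key step. The paper sets $A_i := \partial 0$ and $A_j := \partial\iota_{\{0\}}$ for $j \neq i$, which makes $T$ explicitly the linear operator $I - \gamma_i v_i m_i^T$; nonexpansiveness then forces the rank-one matrix $\gamma_i v_i m_i^T$ to be monotone, and positive semidefiniteness of its symmetric part immediately gives $v_i = \alpha_i m_i$ with $\alpha_i \geq 0$. You instead formalize the interpolation constraints $\delta x_i\,\delta a_i \geq 0$, realize $\delta\bx = e_k$ with the same kind of $\bA$ (indicators at a point on the inactive coordinates, a flexible two-point interpolant on coordinate $k$), and deduce proportionality from boundedness of a linear functional on the affine half-space $\{\delta\bz : m_k^T\delta\bz \geq 1/\gamma_k\}$. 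Both arguments hinge on the same test operators and on the strict lower triangularity of $L$; the paper's explicit rank-one computation is a bit shorter and avoids spelling out the realizability step, while your route is more systematic and anticipates the interpolation/PEP machinery the paper deploys later for $m>0$ in Lemmas~\ref{lemma:NE_Primal_necessary}--\ref{lemma:Slater}. Your final normalization of the common $\alpha$ via the null-space identities matches the paper's (which cites Lemma~\ref{lemma:FPE_resolvent_necessary} rather than Theorem~\ref{thm:FPE_iff}, an equivalent source here), and the strict positivity $\alpha>0$ follows exactly as you say, since $V=0$ would contradict $\Null(V)=\Ran(\one)$.
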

\begin{proof}
	Also in this case we argue in $\cH=\R$. We make the following choice: Fix an index $i \in \llbracket1, n \rrbracket$, and let $A_h = \partial f_h$ for all $h \in \llbracket 1, n\rrbracket$, with 
	\begin{equation}\label{eq:proof_lemma_V_M_1}
		f_i = 0, \quad \text{and} \quad f_j = \iota_{\{0\}}, \quad \text{for all} \ j \neq i.
	\end{equation}
	Since the proximity operator of $f_j$ with $j \neq i$ is simply the zero function, in this case, for each $\bz \in \R^{n-1}$, we get $x_i^+  =  \gamma_i m_i^T \bz$ and $x_j^+ = 0$ for all $j \neq i$,	where $m_i$ is the $i$th row of $M$. Therefore, denoting by $v_i$ the $i$th column of $V$, $T$ writes as
	\begin{equation}\label{eq:Z_equals_B_proof_1}
		T(\bz)  =  \bz - V \bx^+ =  \bz - \gamma_i v_i m_i^T \bz = \left(I - \gamma_i v_i m_i^T \right) \bz .
	\end{equation}
	Since $T$ is nonexpansive, $I- T = \gamma_i v_i m_i^T$ is monotone, implying $v_i = \alpha_i m_i$ for some $\alpha_i\geq 0$. Repeating the argument with $i \in \llbracket1, n\rrbracket$, we find that $V = M^T \diag(\alpha_1,\dots, \alpha_n)$. Since $\Null(V)=\Null(M^T)=\Ran(\one)$ from Lemma~\ref{lemma:FPE_resolvent_necessary}, it must hold $\alpha_1 = \dots = \alpha_n =\alpha$.
\end{proof}

\begin{remark}\label{remark:unconditional_stability}
	Assuming that $T$ is only unconditionally stable as done for the two-operators case in the seminal work \cite{Ryu}, raises important difficulties, as with the choice \eqref{eq:proof_lemma_V_M_1} we only obtain:
	\begin{equation*}
		0 \leq v_i m_i^T \leq 2 \gamma_i^{-1}, \quad \text{for all} \ i \in \llbracket1, n\rrbracket.
	\end{equation*}
	Whether $V = \theta M^T$ holds when $n \geq 3$ (the case $n = 2$ follows from Lemma~\ref{lemma:FPE_resolvent_necessary}) and only assuming unconditional stability, remains a challenging open problem. 
\end{remark}

\begin{remark}\label{remark:removing_alpha}
To drop the dependency on $\alpha$, from now on we can consider the equivalent algorithm $\bar T = \mathfrak{E} T (\mathfrak{E}^{-1})$ with $\mathfrak{E} := \alpha^{-\frac{1}{2}}I$, and change (without renaming) $\bar M := \alpha ^{\frac{1}{2}}M$ to $M$. Since $\mathfrak{E}$ is a scaling, the two operators are nonexpansive with respect to the same norm. Thus, we shall suppose without loss of generality that $V = M^T$.
\end{remark}

It remains to obtain necessary and sufficient conditions for \eqref{eq:frugal_resolvent_fpe} with $V=M^T$ to be averaged nonexpansive. Let us first highlight the following important remark:
\begin{remark}\label{remark:removing_theta}
	The class of averaged operators is in a one-to-one correspondence with that of nonexpansive operators. Indeed, by definition $T$ is a $\theta$-averaged operator if and only if $T_{(1)}:= \frac{1}{\theta}T - \frac{1-\theta}{\theta}\Id$ is nonexpansive. Therefore, in our setting, to characterize the class of $\theta$-averaged operators, it suffices to characterize the class of nonexpansive operators and scale them as $\theta T + (1-\theta)\Id$ to obtain the full class of $\theta$-averaged operators.
\end{remark}

\begin{theorem}
	\label{thm:frugal_resolvent_iff}
	A frugal resolvent operator with minimal lifting, as parameterized by \eqref{eq:abstract_fs_block} has the fixed-point encoding property and is $\theta$-averaged if and only if it can be written as
	\begin{equation}\label{eq:frugal_splitting_nonexp}
		 \bz^+:= T(\bz), \quad \text{with} \quad \left\{\begin{aligned}
			&\bx^+ = J_{\Gamma \bA}(\Gamma L \bx^+ + \Gamma M \bz),\\
			&\bz^{+} = \bz - \theta M^T\bx^+,
		\end{aligned}\right.
	\end{equation}
	where $M^T \in \R^{(n-1)\times n}$, $\Gamma := \diag(\gamma)\succ 0$, and $L\in \real^{n\times n}$ satisfy:
	\begin{enumerate}[label=(\roman*)]
		\item\label{thm:resolvent_nonexp_iff_1} $\Null (M^T) = \Ran(\one)$,
		\item\label{thm:resolvent_nonexp_iff_2} $L\in \real^{n\times n}$ is strictly lower triangular such that $\one^T\left(\Gamma^{-1}-L\right)\one = 0$,
		\item\label{thm:resolvent_nonexp_iff_3} $2\Gamma^{-1}-L-L^T\succeq  MM^T$.
	\end{enumerate}
\end{theorem}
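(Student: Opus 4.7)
The plan is to combine \cref{thm:FPE_iff} (specialized to $m=0$) with a new characterization of nonexpansiveness expressed by the linear matrix inequality in item~\ref{thm:resolvent_nonexp_iff_3}, leveraging the reductions from \cref{sec:nonexp_resolvent}. First, \cref{thm:FPE_iff} forces any fixed-point-encoding frugal resolvent operator with minimal lifting to have the form \eqref{eq:frugal_resolvent_fpe} with items~\ref{thm:resolvent_nonexp_iff_1} and~\ref{thm:resolvent_nonexp_iff_2}. Since a $\theta$-averaged operator is a fortiori nonexpansive, \cref{lem:M_equals_V} yields $V = \alpha M^T$ for some $\alpha > 0$, and \cref{remark:removing_alpha} lets us reduce to $V = M^T$ via equivalence. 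By \cref{remark:removing_theta}, characterizing $\theta$-averaged operators of this form is equivalent to characterizing nonexpansive instances of the unrelaxed operator $T_0(\bz) := \bz - M^T\bx^+$; the $\theta$-averaged operator in \eqref{eq:frugal_splitting_nonexp} is then recovered as $T = (1-\theta)\Id + \theta T_0$.

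For sufficiency of~\ref{thm:resolvent_nonexp_iff_3}, I would run a standard monotonicity chain. From $(\Gamma^{-1}-L)\bx^+ + \ba^+ = M\bz$ with $\ba^+ \in \bA(\bx^+)$, monotonicity of $\bA$ gives $\langle M(\bz-\bz'),\bx^+-\bx'^+\rangle \geq \tfrac{1}{2}\langle \bx^+-\bx'^+, (2\Gamma^{-1}-L-L^T)(\bx^+-\bx'^+)\rangle$. Expanding $\|T_0(\bz)-T_0(\bz')\|^2 = \|\bz-\bz'\|^2 - 2\langle M(\bz-\bz'),\bx^+-\bx'^+\rangle + \|M^T(\bx^+-\bx'^+)\|^2$ and applying this bound yields
\begin{equation*}
\|T_0(\bz)-T_0(\bz')\|^2 \leq \|\bz-\bz'\|^2 - \langle \bx^+-\bx'^+,(2\Gamma^{-1}-L-L^T - MM^T)(\bx^+-\bx'^+)\rangle,
\end{equation*}
which is $\leq \|\bz-\bz'\|^2$ by item~\ref{thm:resolvent_nonexp_iff_3}, so $T_0$ is nonexpansive and $T$ is $\theta$-averaged.

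The main obstacle is the necessity of~\ref{thm:resolvent_nonexp_iff_3}. My strategy is to probe $T_0$ with affine monotone operators $A_i(x) = c_i x + b_i$, $c_i\geq 0$, for which the monotonicity inequality above is tight. Then
\begin{equation*}
\|T_0(\bz)-T_0(\bz')\|^2 - \|\bz-\bz'\|^2 = -\langle \bw,(2\Gamma^{-1}+2D-L-L^T-MM^T)\bw\rangle,
\end{equation*}
where $D = \diag(c_1,\dots,c_n) \succeq 0$ and $\bw = \bx^+-\bx'^+ = (\Gamma^{-1}-L+D)^{-1}M(\bz-\bz')$. As $\bz-\bz'\in\R^{n-1}$ varies, $\bw$ sweeps the $(n-1)$-dimensional subspace $(\Gamma^{-1}-L+D)^{-1}\Ran(M)$, so nonexpansiveness of $T_0$ yields a family of quadratic inequalities indexed by $D\succeq 0$ diagonal. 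The hard part will be showing that these jointly force $2\Gamma^{-1}-L-L^T \succeq MM^T$ on all of $\R^n$. My plan is to extract the LMI on $\Ran(\one)^\perp$ by taking $D \to 0$ along suitable directions while noting that, as $D$ varies, the subspaces $(\Gamma^{-1}-L+D)^{-1}\Ran(M)$ jointly cover $\Ran(\one)^\perp$; then exploit $\one^T MM^T\one = 0$ (from~\ref{thm:resolvent_nonexp_iff_1}) and $\one^T(2\Gamma^{-1}-L-L^T)\one = 0$ (from~\ref{thm:resolvent_nonexp_iff_2}), which make $\one$ a common null direction and extend the inequality automatically from $\Ran(\one)^\perp$ to $\R^n$. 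Should the affine family prove insufficient to sweep enough directions, I would pass to $\cH = \R^d$ with $d\geq 2$ and add skew-symmetric linear perturbations to $A_i$, which preserve tight monotonicity while enlarging the range of accessible $\bw$.
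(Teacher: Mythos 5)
Your reduction steps and the sufficiency direction are correct and essentially identical to the paper's argument (invoke \cref{thm:FPE_iff}, \cref{lem:M_equals_V}, \cref{remark:removing_alpha}, \cref{remark:removing_theta}, then run the monotonicity chain). The gap is in your necessity argument for item~\ref{thm:resolvent_nonexp_iff_3}.

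Probing with affine operators $A_i = c_i\,\Id + b_i$ and hoping to take $D \to 0$ does not close the argument, and the paper in fact warns against a version of exactly this idea in a footnote. The obstruction is two-fold. First, for fixed $D \succeq 0$ diagonal, nonexpansiveness only yields $\bw^T(2\Gamma^{-1}+2D-L-L^T-MM^T)\bw \geq 0$ on the single $(n-1)$-dimensional subspace $(\Gamma^{-1}-L+D)^{-1}\Ran(M)$; the $+2D$ term \emph{relaxes} the inequality, so having the bound for arbitrary $D$ does not give the LMI you want. Second, even though the covering claim is correct (for every nonzero $\bw$ with $\one^T\bw = 0$ there exists diagonal $D \succeq 0$ with $\bw \in (\Gamma^{-1}-L+D)^{-1}\Ran(M)$, because $\one^T(\Gamma^{-1}-L+D)\bw = 0$ is a single linear equation in $D$ that is solvable with $D \succeq 0$ precisely when $\bw$ has mixed-sign entries), the $D$ attached to a given $\bw$ cannot be taken small: as $D \to 0$ the subspaces converge, in the Grassmannian, to the single subspace $(\Gamma^{-1}-L)^{-1}\Ran(M)$. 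So you only ever obtain $\bw^TQ\bw \geq -2\bw^TD\bw$ for some $\bw$-dependent $D$ that you have no way to send to zero. The skew-symmetric fallback does not repair this: the skew part cancels in $\langle \ba^+-\bar\ba^+, \bw\rangle$, so the quadratic form is unchanged, and the additional degrees of freedom move the test subspace in directions orthogonal (blockwise) to $\bw$, which is not what you need.

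What the paper does instead is abandon the affine family and use the degenerate choices $A_i = \partial\iota_{\{0\}}$ (zero resolvent, morally the $c_i \to \infty$ endpoint you excluded) on a coordinate subset $\mathcal{J}$, and $A_i = 0$ on the complement $\mathcal{I}$. This hard-constrains $x_i^+ = 0$ for $i \in \mathcal{J}$, so the inner product with $\ba^+ - \bar\ba^+$ drops out entirely on $\mathcal{J}$ and no $D$-contamination appears; the remaining coordinates $\bx^+_{\mathcal{I}} = (\Gamma^{-1}-L)_{\mathcal{II}}^{-1}M_{\mathcal{I}}\bz$ sweep all of $\R^{|\mathcal{I}|}$ because $M_{\mathcal{I}}$ is full rank when $|\mathcal{I}| \leq n-1$. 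This produces $(2\Gamma^{-1}-L-L^T-MM^T)_{\mathcal{II}} \succeq 0$ for every principal subset of size $\leq n-1$, and combined with $\one^T(2\Gamma^{-1}-L-L^T-MM^T)\one = 0$ (hence $\det = 0$), Sylvester's criterion for positive semidefinite matrices yields the full LMI. If you want to salvage the affine-family framing, the correct move is the opposite of $D \to 0$: send $d_j \to +\infty$ for $j \in \mathcal{J}$ and keep $d_i = 0$ for $i \in \mathcal{I}$, then check that $d_j\|w_j\|^2 \to 0$ in the limit, which recovers the paper's argument as a limit case.
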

\begin{proof}
	First, from Lemma~\ref{lem:M_equals_V} and Remark~\ref{remark:removing_alpha} we can assume that $V=M^T$. Second, from Remark~\ref{remark:removing_theta}, we can suppose that $\theta = 1$ and focus on the nonexpansivity of $T$. Recall that for all $\bz \in \cH^{n-1}$ evaluating $T(\bz)$ yields $\bx^+$ and $\ba^+ \in \bA(\bx^+)$ such that $(\Gamma^{-1} - L)\bx^+ + \ba^+ = M\bz$. Therefore, for all $\bz, \bar \bz \in \cH^{n-1}$, denoting by $\bx^+$, $\ba^+$, and $\bar \bx^+$, $\bar \ba^+$ the corresponding elements generated by evaluation of $T(\bz)$ and $T(\bar \bz)$, respectively, we get:
    \begin{equation}\label{eq:frugal_resolvent_iff_nonexpasive}
		\begin{aligned}
			&\|T(\bz) - T(\bar \bz)\|^2 - \|\bz - \bar \bz\|^2 =  \|\bz - \bar \bz - M^T(\bx^+  - \bar \bx^+)\|^2 - \|\bz - \bar \bz\|^2 = \\
			& = -2\langle M\bz- M\bar \bz, \bx^+  - \bar \bx^+\rangle + \|\bx^+  - \bar \bx^+\|^2_{MM^T}\\
			& = -2 \langle (\Gamma^{-1} - L)(\bx^+ - \bar \bx^+), \bx^+ - \bar \bx^+\rangle - \langle \ba - \bar \ba, \bx^+ - \bar \bx^+\rangle + \|\bx^+  - \bar \bx^+\|^2_{MM^T},\\
			& = - \langle (2\Gamma^{-1} - L - L^T)(\bx^+ - \bar \bx^+), \bx^+ - \bar \bx^+\rangle - \langle \ba - \bar \ba, \bx^+ - \bar \bx^+\rangle + \|\bx^+  - \bar \bx^+\|^2_{MM^T}.
		\end{aligned}
	\end{equation}
	
	\textit{Sufficiency:} Using that $\bA$ is monotone, the right hand-side of \eqref{eq:frugal_resolvent_iff_nonexpasive} is non-positive if the quadratic form in $\bx^+ - \bar \bx^+$ is non-positive, which is indeed a consequence of Item~\ref{thm:resolvent_nonexp_iff_3}.
	
	\textit{Necessity:} Items \ref{thm:resolvent_nonexp_iff_1} and \ref{thm:resolvent_nonexp_iff_1} follow from Items \ref{thm:FPE_iff:Gamma} and \ref{thm:FPE_iff:VM} of Theorem \ref{thm:frugal_resolvent_iff}. Let us show Item~\ref{thm:resolvent_nonexp_iff_3}. If $T$ is nonexpansive for all $A_1, \dots, A_n$ with $\zer(\sum_{i=1}^n A_i) \neq \emptyset$ the right hand-side of \eqref{eq:frugal_resolvent_iff_nonexpasive} should be nonpositive for each of these possible choices. Let $\cH=\R$. We proceed with a Sylvester-type argument\footnote{Unfortunately, picking $\bA=0$ does not yield the claim since $\bx^+ = (\Gamma^{-1} - L)^{-1}M\bz$ and thus $\bx^+ - \bar \bx^+$ would only range on a subspace of $\R^{n}$, as $\bz$ and $\bar \bz$ vary in $\R^{n-1}$.}: Pick $\mathcal{I} \subset \llbracket1, n\rrbracket$ with $|\mathcal{I}| \leq n-1$ and let $A_i := \partial f_i$ with
	\begin{equation}
		f_i := 0, \quad \text{for} \ i \in \mathcal{I}, \quad \text{and} \quad f_i :=\iota_{\{0\}} , \quad \text{for} \ i \notin \mathcal{I}.
	\end{equation}
 	Let us denote $\mathcal{J}:=\mathcal{I}^c$. In this setting, evaluating $T$ yields $x_i^{+} = 0$ for all $i \notin \mathcal{I}$ and $a_i^+ = 0$ for all $i \in \mathcal{I}$. Using that $A_i =\partial \iota_{\{0\}} = \R$ for all $i \notin \mathcal{I}$, the other coordinates, are those (and only those) that solve the following system:
	\begin{equation}\label{eq:frugal_resolvent_iff_evaluation}
		\bx_{\mathcal{I}}^+ \in \R^{|\mathcal{I}|}, \quad \ba_{\mathcal{J}}^+\in \R^{|\mathcal{J}|} \quad \text{such that}: \quad \left\{\begin{aligned}
		&(\Gamma^{-1} - L)_{\mathcal{I} \mathcal{I}}\bx_{\mathcal{I}}^+ = M_{\mathcal{I}} \bz,\\
		& (\Gamma^{-1} - L)_{\mathcal{J}  \mathcal{I}}\bx_{\mathcal{I}}^+ + \ba_{\mathcal{J}}^+= M_{\mathcal{J}} \bz,
		\end{aligned}\right.
	\end{equation}
	where $(\Gamma^{-1} - L)_{\mathcal{J} \mathcal{I}}$ denotes the minor of $\Gamma^{-1} - L$ obtained keeping only its $\mathcal{J}$ rows and its $\mathcal{I}$ columns, and $M_{\mathcal{I}}$ denotes the matrix obtained by keeping its $\mathcal{I}$ rows. Now, since $\Gamma^{-1} - L$ is invertible and lower-triangular, also $(\Gamma^{-1} - L)_{\mathcal{I} \mathcal{I}}$ is so, and since $\mathcal{M}_{I}$ is full rank, then $(\Gamma^{-1} - L)_{\mathcal{I} \mathcal{I}}^{-1}M_{\mathcal{I}}$ is well-defined and onto. It follows that the unique solution to \eqref{eq:frugal_resolvent_iff_evaluation} is
	\begin{equation}\label{eq:frugal_resolvent_iff_nonexpasive_sols}
		\bx_{\mathcal{I}}^+ = (\Gamma^{-1} - L)_{\mathcal{I} \mathcal{I}}^{-1}M_{\mathcal{I}}\bz, \quad \text{and} \quad \ba_{\mathcal{J}}^+ = M_{\mathcal{J}} \bz - (\Gamma^{-1} - L)_{\mathcal{J}  \mathcal{I}}\bx_{\mathcal{I}}^+.
	\end{equation}
 	By the zero patterns of $\bx^+$ and $\ba^+$, the nonexpansiveness of $T$ in \eqref{eq:frugal_resolvent_iff_nonexpasive} now reads as:
	\begin{equation}
		- \langle (2\Gamma^{-1} - L - L^T)_{\mathcal{I}\mathcal{I}}(\bx_\mathcal{I}^+ - \bar \bx_\mathcal{I}^+), \bx_\mathcal{I}^+  - \bar \bx_\mathcal{I}^+\rangle + \|\bx_\mathcal{I}^+  - \bar \bx_\mathcal{I}^+\|^2_{(MM^T)_{\mathcal{I}\mathcal{I}}} \leq 0, \quad \text{for all} \ \bz, \bar \bz \in \R^{n-1}.
	\end{equation} 
	Using now \eqref{eq:frugal_resolvent_iff_nonexpasive_sols} and the fact that $(\Gamma^{-1} - L)_{\mathcal{I} \mathcal{I}}^{-1}M_{\mathcal{I}}$ is onto, we deduce that 
	\begin{equation}
	(2\Gamma^{-1} - L - L^T - MM^T)_{\mathcal{I}\mathcal{I}} \succcurlyeq 0, \quad \text{for all} \ \mathcal{I}\subset \llbracket 1, n\rrbracket, \ |\mathcal{I}|\leq n-1. 
	\end{equation}
	Gathering $\one^T\left(\Gamma^{-1}-L\right)\one = 0$ (Item~\ref{thm:resolvent_nonexp_iff_2}), and $\one^TMM^T\mathbf{1}=0$ (Item~\ref{thm:resolvent_nonexp_iff_1}), we get
	\begin{equation*}
		\one^T(2\Gamma^{-1}-L-L^T - MM^T)\one=0, \quad \text{hence} \quad \det(2\Gamma^{-1}-L-L^T - MM^T)=0.
	\end{equation*}
	Since each minor of $2\Gamma^{-1}-L-L^T - MM^T$ has non-negative determinant,  Sylvester's criterion for positive semidefinite matrices yields the claim.
\end{proof}

\begin{remark}
 Inspecting the proof of Theorem~\ref{thm:frugal_resolvent_iff}, we can notice that the class of test operators (or counterexamples) reduces to (sub)gradients of indicator, linear and quadratic functions, even with $\cH=\R$. We shall conclude that \eqref{eq:frugal_splitting_nonexp} defines the full class of averaged resolvent splitting with minimal lifting even to address \eqref{eq:Inclusion} with respect to this particularly coarse family of operators in dimension 1.
\end{remark}

\subsection{The general case}

In this section, we now focus on the general case with $m\geq 1$. The parameterization of \eqref{eq:frugal_splitting_fpe} characterizes all methods satisfying properties \eqref{prop:splitting}--\eqref{prop:FPE}, as a consequence of Proposition~\ref{prop:causal_iff} and Theorem~\ref{thm:FPE_iff}. Further, from Lemma~\ref{lem:M_equals_V} and Remark~\ref{remark:removing_alpha} we can suppose that $T$ is of the form:
\begin{equation}\label{eq:frugal_splitting_fpe_plus}
	\bz^+:= T(\bz), \quad \text{with} \quad \left\{\begin{aligned}
		&\bx^+ = J_{\Gamma \bA}\left(\Gamma L \bx^+ - \Gamma H \bC(K \bx^+) + \Gamma M \bz\right),\\
		&\bz^{+} = \bz - M^T\bx^+.
	\end{aligned}\right.
\end{equation}
To tackle this more general setting, it turns out to be convenient to divide our analysis into two parts. First, we give sufficient conditions, and then, we show that these are indeed necessary as well under one additional dimensionality requirement. 

\subsubsection{Sufficient conditions for nonexpansiveness}\label{sec:sufficient_conditions}

We introduce the following assumption which we will show to be necessary and sufficient for averaged nonexpansiveness as long we are not constrained to inclusion problems in $\cH$ with $\dim(\cH)<2n+m-1$:
\begin{assumption}
    Suppose that the following holds for $\beta:=(\beta_1, \dots, \beta_m)$
    \begin{equation}\label{ass:NE}
    	2\Gamma^{-1}-L-L^T\succeq  MM^T + \frac{1}{2}\left(H-K^T\right)\diag(\beta)\left(H^T-K\right).
    \end{equation}
\end{assumption}

Sufficiency of \eqref{ass:NE} follows from a direct analysis, without any further assumptions on $\dim(\cH)$, much like in the sufficiency part of Theorem~\ref{thm:frugal_resolvent_iff}.
\begin{lemma}
    \label{lemma:NE_sufficient}
    Let $T$ be a frugal splitting parameterized by \eqref{eq:frugal_splitting_fpe_plus}. If Assumption~ \eqref{ass:NE} holds, then $T$ is nonexpansive for every $A\in \cA_n$, $C \in \cC_m$, and all real Hilbert spaces $\cH$.
\end{lemma}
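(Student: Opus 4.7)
The plan is to mirror the sufficiency computation of Theorem~\ref{thm:frugal_resolvent_iff} and absorb the additional forward-operator cross term through a pairing of cocoercivity with Young's inequality. Fix $\bz, \bar\bz \in \cH^{n-1}$, let $\bx^+, \ba^+ \in \bA(\bx^+), \bu^+ = \bC(K\bx^+)$ and $\bar\bx^+, \bar\ba^+, \bar\bu^+$ denote the quantities generated by $T(\bz)$ and $T(\bar\bz)$, and write $\Delta\bx, \Delta\ba, \Delta\bu, \Delta\bz$ for their differences. The resolvent step gives $M\Delta\bz = (\Gamma^{-1}-L)\Delta\bx + \Delta\ba + H\Delta\bu$, and a computation parallel to \eqref{eq:frugal_resolvent_iff_nonexpasive} (with an extra $H\Delta\bu$ summand coming from the explicit $H\bC(K\bx^+)$ term in \eqref{eq:frugal_splitting_fpe_plus}) yields
\begin{equation*}
\|T(\bz)-T(\bar\bz)\|^2 - \|\Delta\bz\|^2 = -\langle (2\Gamma^{-1}-L-L^T-MM^T)\Delta\bx, \Delta\bx\rangle - 2\langle \Delta\ba, \Delta\bx\rangle - 2\langle H\Delta\bu, \Delta\bx\rangle.
\end{equation*}
Monotonicity of $\bA$ makes the $\Delta\ba$ term nonnegative, so it can be dropped.

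The main obstacle---and the only step that genuinely differs from the pure-resolvent case---is bounding $-2\langle H\Delta\bu, \Delta\bx\rangle = -2\langle \Delta\bu, H^T\Delta\bx\rangle$ so that the resulting estimate is a pure quadratic form in $\Delta\bx$. I would handle it coordinate-wise by splitting $H^T = (H^T - K) + K$. For each $j$ with $\beta_j > 0$, cocoercivity of $C_j$ supplies $2\langle \Delta u_j, (K\Delta\bx)_j\rangle \geq (2/\beta_j)\|\Delta u_j\|^2$, while Young's inequality with weight chosen exactly to match, $2/\beta_j$, yields
\begin{equation*}
-2\langle \Delta u_j, ((H^T-K)\Delta\bx)_j\rangle \leq (2/\beta_j)\|\Delta u_j\|^2 + (\beta_j/2)\|((H^T-K)\Delta\bx)_j\|^2.
\end{equation*}
The $(2/\beta_j)\|\Delta u_j\|^2$ contributions cancel after summing, leaving the matrix bound $-2\langle H\Delta\bu, \Delta\bx\rangle \leq \tfrac{1}{2}\langle (H-K^T)\diag(\beta)(H^T-K)\Delta\bx, \Delta\bx\rangle$. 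The case $\beta_j = 0$ is trivial, since then $C_j$ is constant and $\Delta u_j = 0$, so the $j$th summand vanishes on both sides.

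Combining everything, I would arrive at
\begin{equation*}
\|T(\bz)-T(\bar\bz)\|^2 - \|\Delta\bz\|^2 \leq -\left\langle \left(2\Gamma^{-1} - L - L^T - MM^T - \tfrac{1}{2}(H-K^T)\diag(\beta)(H^T-K)\right)\Delta\bx, \Delta\bx\right\rangle,
\end{equation*}
which is non-positive by Assumption~\eqref{ass:NE}, giving the desired nonexpansiveness. The design choice that makes the argument close is precisely the splitting $H^T = (H^T-K) + K$ paired with the Young weight $2/\beta_j$: it is what forces cocoercivity to deliver exactly the quadratic form $(H-K^T)\diag(\beta)(H^T-K)$ appearing in the hypothesis, with nothing left over in $\Delta\bu$.
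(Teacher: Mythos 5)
Your proof is correct and follows essentially the same route as the paper's: expand the square, substitute the resolvent identity, drop the $\bA$-monotonicity term, and use cocoercivity of each $C_j$ to bound the remaining forward-operator cross term. The only difference is cosmetic: where you eliminate $\Delta\bu$ pointwise via the Young split $H^T=(H^T-K)+K$ with weight $2/\beta_j$, the paper instead keeps $\Delta\bu$ and bundles everything into the joint quadratic form $-\|(\Delta\bx,\Delta\bu)\|_Q^2$, appealing to the Schur complement of the block matrix $Q$ in \eqref{eq:def_Q} to recover the same condition \eqref{ass:NE}---your Young step is precisely that Schur complement unpacked coordinatewise.
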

\begin{proof}
    Let $\bz \in \mathcal{H}^{n-1}$. Recall that evaluating $T(\bz)$ produces $\ba^+ \in \bA(\bx^+)$, $\bx^+ \in \cH^{n}$, and $\bu^+ =\bC(K\bx^+)$ such that:
    \begin{equation}
    	(\Gamma^{-1} - L) \bx^+ + \ba^+ + H \bu^+ = M \bz.
    \end{equation}
    Pick two arbitrary $\bz, \bar \bz \in \cH^{n-1}$, and let $\bx^+$, $\ba^+$, $\bu^+$ and $\bar \bx^+$, $\bar \ba^+$, $\bar \bu^+$ be the corresponding terms obtained by evaluation of $T(\bz)$ and $T(\bar \bz)$. Observe that:
    \begin{equation}
        \begin{aligned}
            &\|T(\bz)-T(\bar\bz)\|^2 - \|\bz-\bar\bz \|^2 = \|(\bz-\bar\bz)- M^T(\bx^+-\bar\bx^+) \|^2-\|\bz-\bar\bz \|^2 \\
            &\leq
            \|\bx^+-\bar \bx^+\|_{MM^T}^2 - 2\langle M(\bz-\bar\bz),  \bx^+-\bar\bx^+ \rangle\\
            & \quad + 2\left(\langle \bu^+-\bar \bu^+, K(\bx^+-\bar \bx^+)\rangle - \|\bu^+-\bar\bu^+ \|^2_{\diag(\beta)^{-1}}\right)\\
            &= -\|\bx^+-\bar\bx^+ \|_{2\Gamma^{-1}-L-L^T - MM^T}^2 -\langle \ba^+ - \bar \ba^+, \bx^+ - \bar \bx^+\rangle - 2\langle \bx^+-\bar\bx^+, H(\bu^+-\bar\bu^+)\rangle \\
            &\quad + 2 \langle \bu^+-\bar \bu^+, K(\bx^+-\bar \bx^+\rangle -2\|\bu^+-\bar\bu^+\|^2_{\diag(\beta)^{-1}} \leq - \|(\bx-\bar\bx, \bu-\bar\bu) \|_Q^2 \leq 0,
        \end{aligned}
    \end{equation}
    where the first equality uses the definition of $T$, the 
    first inequality uses the nonnegativity of:
        \begin{equation*}
        \langle \bu^+-\bar\bu^+, K(\bx^+-\bar\bx^+)\rangle - \|\bu^+-\bar\bu^+\|_{\diag(\beta)^{-1}}^2  =\sum_{i=1}^m\left(\langle \bu_i^+-\bar \bu_i^+, K_i(\bx^+-\bar \bx^+)\rangle - \frac{1}{\beta_i}\|\bu_i^+-\bar \bu_i^+ \|^2\right),
    \end{equation*}
    which follows from $\frac{1}{\beta}_i$-cocoercivity of $C_i$ for $i\in \llbracket 1, m\rrbracket$, and the second inequality uses monotonicity of $\bA$. The last inequality follows from the positive semidefiniteness of the matrix
    \begin{equation}\label{eq:def_Q}
    	Q := \begin{bmatrix}
    		2\Gamma^{-1}-L - L^T- MM^T & H-K^T\\
    		H^T-K & 2\diag(\beta)^{-1}
    	\end{bmatrix}.
    \end{equation}
    which is indeed equivalent, by Schur complement, to \eqref{ass:NE}.
\end{proof}

\subsubsection{Necessary and sufficient conditions}
We now show necessity of the condition in \eqref{ass:NE} for nonexpansiveness. In the proof of Theorem~\ref{thm:frugal_resolvent_iff}, we design several maximal monotone operators $A_1, \dots, A_n$ to test the nonexpansiveness inequality \eqref{eq:frugal_resolvent_iff_nonexpasive} and obtain each time specific properties that eventually yield Item~\ref{thm:resolvent_nonexp_iff_3}. Unfortunately, in the case $\bC \neq 0$, this approach is no longer efficient as simple choices of $C_1, \dots, C_m$, e.g., linear operators, seem to yield weaker conditions than \eqref{ass:NE}. To overcome this apparent lack of simple counterexamples, we consider the following optimization problem:
\begin{equation}\label{eq: PEP-1}
        \begin{aligned}
                &\underset{A, C, \bz, \bz^\star}{\text{maximize}} && \|T(\bz)-T(\bz^\star)\|^2-\|\bz-\bz^\star \|^2, \\
                &\text{subject to} && A \in \cA_n, \ C\in \cC_m, \  \text{$\textstyle \zer \big(\sum_{i=1}^n A_i + \sum_{i=1}^m C_i\big)\neq \emptyset$}, \ \bz, \bz^\star\in \mathcal{H}^{n-1}.
            \end{aligned}
\end{equation}
This problem will be reformulated using interpolation conditions, where the tightness of the cocoercive interpolation conditions are shown in \cite{RyuOperatorSplittingPEP}, and the resulting SDP-relaxation will be tight assuming $\dim(\cH)\geq 2n+m-1$, a condition not appearing in the pure resolvent splitting in Section~\ref{sec:nonexp_resolvent}. We will lastly find that feasibility of the dual SDP is equivalent both to nonexpansiveness, by strong duality, and to assumption \eqref{ass:NE}. 

We begin with a nonconvex, finite-dimensional reformulation of Problem~\eqref{eq: PEP-1}, whose proof is deferred to the appendix. To ensure \( \Fix(T) \neq \emptyset \) by construction, we focus on the nonexpansiveness of \( T \) relative to its fixed points---i.e., \emph{quasinonexpansiveness}. Interestingly, this requirement imposes no additional restrictions, and, in fact, even the techniques employed to prove Theorem \ref{thm:frugal_resolvent_iff} trivially extend to this more general setting.
    \begin{lemma}
        \label{lemma:NE_Primal_necessary}
        Consider the frugal splitting operator $T$ defined by \eqref{eq:frugal_splitting_fpe_plus}. $T$ is quasinonexpansive for all $A \in \cA_n$, $C \in \cC_m$ and all real Hilbert space $\cH$ if and only if the following optimization problem is non-positive
            \begin{equation}
            \label{eq: PEP-2}
            \begin{aligned}
                &\underset{\bx, \bx^\star, \bu,  \bu^\star, \bz, \bz^\star}{\text{maximize}} & & \|\bz-\bz^\star - M^T(\bx-\bx^\star)\|^2-\|\bz-\bz^\star \|^2, \\
                &\text{subject to} & & \langle x_i-x_i^\star, -(\Gamma^{-1} -L)_i(\bx-{\bx}^\star) - H_i(\bu-{\bu}^\star) + M_i(\bz-{\bz}^\star)\rangle \geq 0, \quad \forall i\in \llbracket 1, n\rrbracket,\\
                &&& \langle u_i-u_i^\star, K_i(\bx-{\bx}^\star)\rangle - \frac{1}{\beta_i} \|u_i-u^\star_i\|^2 \geq 0, \quad \forall i\in \llbracket 1, m\rrbracket, \\
                &&& \bz^\star = T(\bz^\star), \ \bx, \bx^\star \in \mathcal{H}^{n}, \ \bu, \bu^\star\in \mathcal{H}^m, \ \bz, \bz^\star\in \mathcal{H}^{n-1}.
            \end{aligned}  
        \end{equation}
        \end{lemma}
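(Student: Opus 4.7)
The plan is to establish the equivalence by showing that the constraint set of \eqref{eq: PEP-2} is precisely the collection of tuples that can be produced by evaluating $T$ at two points---one of which is a fixed point---for some valid choice of operators $(A, C)$. The argument splits into two directions, with the forward direction being elementary and the converse requiring two-point interpolation of maximal monotone and cocoercive operators.

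For the direction $(\Leftarrow)$, suppose the optimal value of \eqref{eq: PEP-2} is nonpositive. Fix arbitrary $A \in \cA_n$, $C \in \cC_m$, $\bz \in \cH^{n-1}$, and $\bz^\star \in \Fix(T)$. Let $(\bx, \ba, \bu)$ and $(\bx^\star, \ba^\star, \bu^\star)$ denote the tuples obtained by evaluating $T(\bz)$ and $T(\bz^\star) = \bz^\star$, respectively, and observe from the first line of \eqref{eq:abstract_fs_inclusion} that $\ba = M\bz - (\Gamma^{-1}-L)\bx - H\bu$, with the analogous identity at $\bz^\star$. The first family of constraints in \eqref{eq: PEP-2} is then simply monotonicity of each $A_i$ applied to $(x_i, a_i), (x_i^\star, a_i^\star) \in \gra A_i$, while the second is $\tfrac{1}{\beta_i}$-cocoercivity of $C_i$ applied to $(K_i\bx, u_i)$ and $(K_i\bx^\star, u_i^\star)$. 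Since the PEP objective coincides with $\|T(\bz) - \bz^\star\|^2 - \|\bz - \bz^\star\|^2$, nonpositivity yields quasinonexpansiveness.

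For the direction $(\Rightarrow)$, I would fix any feasible primal tuple $(\bx, \bx^\star, \bu, \bu^\star, \bz, \bz^\star)$ and define $\ba, \ba^\star$ through the same formulas as above. The first constraint of \eqref{eq: PEP-2} reads $\langle x_i - x_i^\star, a_i - a_i^\star\rangle \geq 0$, which is exactly the two-point interpolation condition for maximal monotone operators, so a standard affine extension produces $A_i \in \cA_n$ with $(x_i, a_i), (x_i^\star, a_i^\star) \in \gra A_i$. The second constraint is the two-point interpolation condition for $\tfrac{1}{\beta_i}$-cocoercive operators at inputs $K_i\bx, K_i\bx^\star$ and outputs $u_i, u_i^\star$, whose tightness is established in \cite{RyuOperatorSplittingPEP} and yields an appropriate $C_i \in \cC_m$. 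By construction $(\bx, \ba, \bu)$ and $(\bx^\star, \ba^\star, \bu^\star)$ solve \eqref{eq:abstract_fs_inclusion} at $\bz$ and $\bz^\star$, respectively, and by uniqueness of the solution to that system they are precisely the outputs of $T(\bz)$ and $T(\bz^\star)$. The PEP constraint $\bz^\star = T(\bz^\star)$ then unfolds, via $\bz^+ = \bz - M^T\bx^+$, to the algebraic condition $M^T\bx^\star = 0$, identifying $\bz^\star$ as a fixed point of $T$. Applying the assumed quasinonexpansiveness for this specific $A, C, \bz, \bz^\star$ yields $\|T(\bz) - \bz^\star\|^2 - \|\bz - \bz^\star\|^2 \leq 0$, which is exactly the PEP objective at the given primal point.

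The main obstacle is the invocation of tight cocoercive interpolation: whereas two-point monotone interpolation is elementary, the cocoercive analogue requires the nontrivial construction of \cite{RyuOperatorSplittingPEP}. A secondary subtlety is confirming that the interpolated $A_i$ are maximal monotone with full-domain resolvents and the $C_i$ are single-valued with full domain, so that $T$ is well-defined on the whole of $\cH^{n-1}$; both are automatic for the standard affine/cocoercive extensions used.
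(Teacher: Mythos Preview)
Your proposal is correct and follows essentially the same approach as the paper: both rely on tight two-point interpolation conditions (citing \cite{RyuOperatorSplittingPEP} for the cocoercive case) to identify the feasible set of \eqref{eq: PEP-2} with tuples realizable by actual operators $A\in\cA_n$, $C\in\cC_m$. Your version is simply more explicit in separating the two directions and in reading the constraint $\bz^\star = T(\bz^\star)$ as the algebraic condition $M^T\bx^\star = 0$, which the paper handles more tersely by observing that the problem depends only on the differences and one may therefore shift to $(\bz^\star,\bx^\star,\bu^\star)=(0,0,0)$.
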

        
    This problem depends only on $\bz-\bz^\star$, $\bx-\bx^\star$, $\bu-\bu^\star$, so without loss of generality we can set $(\bz^\star, \bx^\star, \bu^\star) = (0,0,0)$, in which case $\bz^\star = T(\bz^\star)$ holds trivially.
    The following convex SDP-formulation is equivalent to \eqref{eq: PEP-2} if $\dim(\cH)\geq 2n+m-1$, cf.~Appendix \ref{app:missing_proofs} for the proof.
    \begin{lemma}\label{lemma:NE_Dual}
            Under the assumption that $\dim(\mathcal{H})\geq 2n+m-1$, non-positivity of \eqref{eq: PEP-2} is equivalent to non-positivity of the following SDP.
    \begin{equation}
        \label{eq: SDP-PEP}
        \begin{aligned}
            &\underset{G \succeq 0}{\text{maximize}} &&  \Tr(AG),\\
            &\text{subject to} && \Tr(GE_i)\geq 0, \quad \text{for} \ i\in \llbracket1, n\rrbracket,\\
            &&&\Tr(GF_i)\geq 0, \quad \text{for} \ i\in \llbracket 1, m\rrbracket,
        \end{aligned}
    \end{equation}
    where, denoting by $e_1, \dots, e_n$ the Euclidean basis in $\R^n$:
    \begin{equation*}     
        A:= \begin{bmatrix}
            0 & -M^T & 0\\
            -M & MM^T & 0\\
            0 & 0 & 0
        \end{bmatrix}, \quad E^C_i:= \begin{bmatrix}
        0 & 0 & 0\\
        0 & 0 & K^T\diag(e_i)\\
        0 & \diag(e_i)K & -2\diag(e_i)\diag(\beta)^{-1}
    \end{bmatrix},
	\end{equation*}
	\begin{equation*}
	E^A_i := \begin{bmatrix}
		0 & M^T\diag(e_i) & 0\\
		\diag(e_i)M & -\diag(e_i)(\Gamma^{-1}-L) - (\Gamma^{-1}-L^T)\diag(e_i) & -\diag(e_i)H\\
		0 & -H^T\diag(e_i) & 0
	\end{bmatrix}.
	\end{equation*}
	\end{lemma}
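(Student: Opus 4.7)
The plan is to establish the equivalence via the standard Performance Estimation Problem (PEP) methodology: reformulate \eqref{eq: PEP-2} as a quadratic problem whose objective and constraints depend on the involved vectors only through their pairwise inner products, lift to an SDP via a Gram matrix, and check that the dimension requirement $\dim(\cH)\geq 2n+m-1$ renders the lifting tight through two-point interpolation of maximal monotone and cocoercive operators.

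Since \eqref{eq: PEP-2} depends on its variables only through the differences $\bz-\bz^\star$, $\bx-\bx^\star$, $\bu-\bu^\star$, I would first translate and work with $(\bz^\star, \bx^\star, \bu^\star) = (0,0,0)$, which is consistent with the constraint $\bz^\star = T(\bz^\star)$ as long as the operators built below satisfy $A_i(0)\ni 0$ and $C_i(0)=0$. The easy direction (SDP $\leq 0$ implies primal $\leq 0$) is then a direct bookkeeping computation: given any primal-feasible tuple $(\bz, \bx, \bu)\in \cH^{n-1}\times\cH^n\times\cH^m$, define the Gram matrix $G \in \R^{(2n+m-1)\times(2n+m-1)}$ of the concatenated family $(\bz_1,\ldots,\bz_{n-1}, \bx_1,\ldots, \bx_n, \bu_1,\ldots,\bu_m)$, ordered to match the $(\bz, \bx, \bu)$-block structure of $A$, $E^A_i$, $E^C_i$. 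A direct expansion shows that the objective $\|\bz-M^T\bx\|^2-\|\bz\|^2$ equals $\Tr(AG)$, while the $i$th monotonicity and cocoercivity constraints become exactly $\Tr(GE^A_i)\geq 0$ and $\Tr(GE^C_i)\geq 0$, respectively.

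For the substantive reverse direction, given a feasible $G\succeq 0$ for \eqref{eq: SDP-PEP}, I would factor $G = W^TW$ with $W \in \R^{r\times (2n+m-1)}$ and $r\leq 2n+m-1\leq \dim(\cH)$, then isometrically embed $\R^r$ into $\cH$ and read off the columns of $W$ as a tuple $(\bz, \bx, \bu)$ whose Gram matrix is $G$. Setting $a_i := M_i\bz - (\Gamma^{-1}-L)_i\bx - H_i\bu$, the SDP constraints translate precisely to the two-point monotonicity inequalities $\langle x_i, a_i\rangle\geq 0$ and the two-point cocoercivity inequalities $\langle u_i, K_i\bx\rangle \geq \frac{1}{\beta_i}\|u_i\|^2$. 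Classical two-point maximal monotone interpolation then extends $\{(0,0),(x_i,a_i)\}$ to a maximal monotone operator $A_i$, while the tight two-point cocoercive interpolation of \cite{RyuOperatorSplittingPEP} extends $\{(0,0),(K_i\bx, u_i)\}$ to a genuinely $\frac{1}{\beta_i}$-cocoercive operator $C_i$. These operators render $(\bz, \bx, \bu)$ a valid instance of \eqref{eq:abstract_fs_inclusion} at $\bz$, with $\bz^\star=0\in\Fix(T)$ automatic from $A_i(0)\ni 0, C_i(0)=0$, and with $0\in\zer(\sum_i A_i + \sum_i C_i)$; the primal objective along this instance equals $\Tr(AG)$, so primal optimum $\geq $ SDP optimum.

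The hard part is the reverse direction, and within it the tight cocoercive interpolation: one must produce a \emph{genuinely} $\frac{1}{\beta_i}$-cocoercive operator (not merely a monotone Lipschitz one) matching the two specified graph points with $C_i(0)=0$, which is exactly the non-trivial content of \cite{RyuOperatorSplittingPEP}. The dimension lower bound $\dim(\cH)\geq 2n+m-1$ enters precisely to guarantee that any rank-$(2n+m-1)$ Gram factorization can be realized by Hilbert-space vectors; without it, the SDP could be a strict relaxation even if the two-point interpolation conditions are met.
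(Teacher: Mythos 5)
Your core argument is the same as the paper's: the forward direction records that the Gram matrix of any PEP-feasible tuple is SDP-feasible with the same objective, and the reverse direction factors a feasible $G$ and realizes the factor columns as vectors in a $(2n+m-1)$-dimensional subspace of $\cH$, which is where the dimension hypothesis enters. That part is correct.

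However, you over-reach by invoking two-point maximal monotone and cocoercive interpolation in the reverse direction. Lemma~\ref{lemma:NE_Dual} asserts equivalence of \eqref{eq: PEP-2} and \eqref{eq: SDP-PEP}, and the constraints of \eqref{eq: PEP-2} are \emph{already} stated purely as quadratic inner-product inequalities in $(\bz,\bx,\bu)$; no operators appear. Consequently, once you factor $G$ and embed, the vectors you read off are \emph{immediately} feasible for \eqref{eq: PEP-2}, because $\Tr(GE_i^A)\geq 0$ and $\Tr(GE_i^C)\geq 0$ unpack verbatim into the two families of inequalities in \eqref{eq: PEP-2}. Constructing $A_i$, $C_i$ and appealing to interpolation (in particular the tight cocoercive interpolation of \cite{RyuOperatorSplittingPEP}) is the content of Lemma~\ref{lemma:NE_Primal_necessary}, which links \eqref{eq: PEP-1} to \eqref{eq: PEP-2}; it is not needed to prove Lemma~\ref{lemma:NE_Dual}. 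You are, in effect, re-proving the composition of the two lemmas rather than the one asked. This is not a mathematical error, but it misattributes where the interpolation difficulty lives, and the phrase \emph{``the hard part is the reverse direction, and within it the tight cocoercive interpolation''} is misleading for this lemma: the only non-trivial ingredient here is the dimension bound that lets a rank-$\leq(2n+m-1)$ Gram factorization be realized in $\cH$.
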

    Let us now consider the dual problem of \eqref{eq: SDP-PEP}. Denote by
    \begin{equation}\label{eq:definition_A_lambda}
    	A_{\boldsymbol{\lambda}}:= A + \sum_{i=1}^{n} \lambda^A_iE_i^A + \sum_{i=1}^m\lambda^C_iE_i, \quad \text{for} \ \boldsymbol{\lambda}:=(\lambda^A, \lambda^C) \in \R^{n}_+ \times \R^m_{+}.
    \end{equation}
     Then, the dual problem of \eqref{eq: SDP-PEP} can be written as:
    \begin{equation}
        \begin{aligned}
            \label{eq: PEP-Dual}
            \inf_{\boldsymbol{\lambda} \in \R^{n}_+ \times \R^m_{+}} \sup_{ G\succeq 0} \ \Tr \left(G A_{\boldsymbol{\lambda}} \right) = \inf_{\boldsymbol{\lambda} \in \R^{n}_+ \times \R^m_{+}} \bigg\{ 0 \ : \  A_{\boldsymbol{\lambda}} \preceq 0\bigg\}.
        \end{aligned}
    \end{equation}
\begin{lemma}\label{lem:feasibility_dual}
The dual SDP \eqref{eq: PEP-Dual} is feasible if and only if \eqref{ass:NE} holds.
\end{lemma}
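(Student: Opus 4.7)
The plan is to analyze the linear matrix inequality $A_{\boldsymbol{\lambda}} \preceq 0$ block by block and show that any feasible multiplier must satisfy $\lambda^A = \lambda^C = \one$, at which point the LMI reduces, via Schur complement, exactly to~\eqref{ass:NE}. Writing $\Lambda^A := \diag(\lambda^A)$ and $\Lambda^C := \diag(\lambda^C)$, direct substitution into~\eqref{eq:definition_A_lambda} gives
\begin{equation*}
A_{\boldsymbol{\lambda}} = \begin{pmatrix}
0 & M^T(\Lambda^A - I) & 0\\
(\Lambda^A - I)M & MM^T -\Lambda^A (\Gamma^{-1}-L) - (\Gamma^{-1}-L^T)\Lambda^A & K^T\Lambda^C-\Lambda^A H\\
0 & \Lambda^C K-H^T \Lambda^A & -2\Lambda^C \diag(\beta)^{-1}
\end{pmatrix},
\end{equation*}
with block sizes $n-1$, $n$, $m$. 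For sufficiency, if~\eqref{ass:NE} holds, I take $\boldsymbol\lambda = (\one,\one)$: all off-diagonal entries involving $\Lambda^A - I$ vanish, and Schur complement against $-2\diag(\beta)^{-1}\prec 0$ makes $A_{\one,\one}\preceq 0$ equivalent to~\eqref{ass:NE}.

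For necessity, suppose $A_{\boldsymbol{\lambda}} \preceq 0$. Since $-A_{\boldsymbol\lambda} \succeq 0$ has a zero $(1,1)$ block, its entire first block row and column must vanish, forcing $M^T(\Lambda^A - I) = 0$. Each column $(\lambda^A_j-1)e_j$ of $\Lambda^A - I$ must therefore lie in $\Null(M^T) = \Ran(\one)$ by Theorem~\ref{thm:FPE_iff}\ref{thm:FPE_iff:VM}. Since $e_j \notin \Ran(\one)$ for $n\geq 2$, this forces $\lambda^A_j = 1$ for every $j$, i.e., $\Lambda^A = I$.

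The main obstacle is to further show that $\Lambda^C = I$, which does not follow from Schur complement alone since otherwise $\Lambda^C$ would remain a free parameter (and the resulting quadratic form in $\Lambda^C$ is not directly comparable to~\eqref{ass:NE}). The idea is to exploit the joint degeneracy produced by $\one \in \Null(M^T)$ and $\one^T(\Gamma^{-1}-L)\one = 0$ (Theorem~\ref{thm:FPE_iff}\ref{thm:FPE_iff:VM}, \ref{thm:FPE_iff:Gamma}), together with the compatibility identities $H^T\one = K\one = \one$ (Theorem~\ref{thm:FPE_iff}\ref{thm:FPE_iff:KH}), by testing the residual positive-semidefinite block
\begin{equation*}
Q(\Lambda^C) := \begin{pmatrix}
2\Gamma^{-1}-L-L^T - MM^T & H - K^T\Lambda^C\\
H^T - \Lambda^C K & 2\Lambda^C\diag(\beta)^{-1}
\end{pmatrix}
\end{equation*}
against vectors of the form $\bv := (-\alpha\one, \bw) \in \R^{n+m}$ with arbitrary $\alpha \in \R$ and $\bw \in \R^m$. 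The top-left contribution $\alpha^2\one^T(2\Gamma^{-1}-L-L^T - MM^T)\one$ vanishes by the identities above, so $\bv^T Q(\Lambda^C) \bv$ becomes \emph{linear} in $\alpha$:
\begin{equation*}
\bv^T Q(\Lambda^C) \bv = -2\alpha \sum_{i=1}^m (1-\lambda^C_i)\, w_i + 2\sum_{i=1}^m \tfrac{\lambda^C_i\, w_i^2}{\beta_i}.
\end{equation*}
Non-negativity for all $\alpha\in\R$ forces the coefficient of $\alpha$ to vanish for every $\bw \in \R^m$, yielding $\lambda^C_i = 1$ for all $i$. Substituting $\Lambda^C = I$ and applying Schur complement against $2\diag(\beta)^{-1}\succ 0$ recovers~\eqref{ass:NE}, closing the argument.
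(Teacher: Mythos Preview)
Your proof is correct and follows essentially the same strategy as the paper: both arguments first use the zero $(1,1)$ block of $-A_{\boldsymbol\lambda}$ to force $\Lambda^A=I$, and then exploit the degeneracy $\one^T(2\Gamma^{-1}-L-L^T-MM^T)\one=0$ together with $H^T\one=K\one=\one$ via test vectors of the form $(0,\one,\bw)$ to pin down $\Lambda^C=I$, after which Schur complement yields~\eqref{ass:NE}. The only cosmetic difference is that the paper fixes the third block as $-c(\one-\lambda^C)$ and exhibits a specific $c$ making the quadratic negative, whereas you keep $\bw$ and $\alpha$ free and observe that the resulting expression is \emph{linear} in $\alpha$, so non-negativity forces the coefficient of $\alpha$ to vanish for every $\bw$; your version is arguably slightly cleaner since it avoids solving for an optimal scalar.
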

\begin{proof}
For $\boldsymbol{\lambda}=(\lambda^A, \lambda^C)\in\R^{n}_+ \times \R^m_{+}$ let $\Lambda^A := \diag(\lambda^A)$ and $\Lambda^C := \diag(\lambda^C)$. Then the dual SDP is feasible if and only if
	\begin{equation}
		\label{eq: Dual-matrix}
			0 \preceq -A_{\boldsymbol{\lambda}}  =\begin{bmatrix}
				0 & M^T(I - \Lambda^A) & 0\\
				(I - \Lambda^A)M &\Lambda^A(\Gamma^{-1}-L) + (\Gamma^{-1}-L^T)\Lambda^A -MM^T & \Lambda^AH-K^T\Lambda^C\\
				0 & H^T\Lambda^A - \Lambda^CK& 2\Lambda^C\diag(\beta)^{-1}
			\end{bmatrix}
	\end{equation} 
	for some $\boldsymbol{\lambda}$. First, we must have $\Lambda^A=I$ as the upper left block of \eqref{eq: Dual-matrix} is zero. Furthermore let $v:= \one-\lambda^C$ and consider test vectors of the form $x := \begin{bmatrix}
		0 &\one_n^T & -cv^T
	\end{bmatrix}^T$ for $c\in \real_+$. Then
	\begin{equation}
		\label{eq:Dual_C}
		\begin{aligned}
			x^TA_{\boldsymbol{\lambda}} x &= 
			0 + 2c^2v^T \diag(\beta)^{-1} v - 2c\one^T\left(\Lambda^AH-K^T\Lambda^C\right)v \\
			&= 2c^2v^T \diag(\beta)^{-1} v - 2c\left( \one^T-(\lambda^C)^T\right)v = 2c^2\|v\|^2_{\diag(\beta)^{-1}} - 2c\|v\|^2,
		\end{aligned}
	\end{equation} 
	where the first term of the first equality is zero, since $\one^T(\Gamma^{-1}-L)\one = 0$, $M^T\one = 0$ and $\lambda^A = \one$. And the second equality uses the $\one^T\Lambda^A = \lambda^{A^T}$, $\one^T\Lambda^C = \lambda^{C^T}$ and $\one^TH=\one^TK^T = \one^T$.
	The quadratic expression \eqref{eq:Dual_C} is negative for $c = \|v\|^2 (2\|v\|^2_{\diag(\beta)^{-1}})^{-1}$ unless $v = 0$. We conclude that the condition $\lambda^C = \one$ is necessary for positive semidefiniteness of $A_{\boldsymbol{\lambda}}$. With these restrictions, $A_{\boldsymbol{\lambda}}$ is is positive semidefinite if and only if the matrix $Q$ defined as in \eqref{eq:def_Q} is positive semidefinite, which is in turn equivalent to \eqref{ass:NE} by Schur complement.
\end{proof}

We now show that without additional assumptions, the optimal value of \eqref{eq: PEP-1} equals the optimal value of \eqref{eq: PEP-Dual}, i.e., the dual problem is feasible. Once again, the technical proof is postponed to the appendix.
\begin{lemma}\label{lemma:Slater}
    Let $H, K, \Gamma$ and $L$ matrices parametrizing \eqref{eq:frugal_splitting_fpe_plus}. Then, strong duality holds of the SDP in \eqref{eq: SDP-PEP} by Slater's condition.
\end{lemma}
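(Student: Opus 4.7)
The plan is to verify Slater's condition for the primal SDP \eqref{eq: SDP-PEP} by exhibiting a strictly feasible matrix, namely a $G\succ 0$ with $\Tr(GE_i^A)>0$ for every $i\in\llbracket 1,n\rrbracket$ and $\Tr(GE_i^C)>0$ for every $i\in\llbracket 1,m\rrbracket$. Attempting Slater on the dual side is hopeless: inspection of \eqref{eq: Dual-matrix} reveals that its $(z,z)$ block is identically zero for every choice of multipliers, ruling out $-A_{\boldsymbol{\lambda}}\succ 0$; strong duality for \eqref{eq: SDP-PEP} will therefore have to come from primal strict feasibility.

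The construction will be the following. Take $\cH=\R^{2n+m-1}$ with an orthonormal basis $\{\xi_1,\dots,\xi_{n-1},\eta_1,\dots,\eta_n,\mu_1,\dots,\mu_m\}$, set $\bz_0:=(\xi_1,\dots,\xi_{n-1})\in\cH^{n-1}$, and for scalars $\tau,\delta>0$ to be chosen later define
\begin{equation*}
    z_k := \tau\xi_k,\qquad x_i := \eta_i + M_i\bz_0,\qquad u_j := \tfrac{\beta_j}{2}K_j\bx + \delta\mu_j.
\end{equation*}
Let $G$ denote the Gram matrix of the tuple $(z_1,\dots,z_{n-1},x_1,\dots,x_n,u_1,\dots,u_m)$. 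Linear independence---and hence $G\succ 0$---will follow by a triangular argument: the $\mu_j$-direction is carried only by $u_j$ (with coefficient $\delta>0$), the $\eta_i$-direction only by $x_i$, and the $\xi_k$-direction only by $z_k$ (with coefficient $\tau$).

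The strict inequalities will then be verified by a direct computation exploiting the mutual orthogonality of $\{\xi\},\{\eta\},\{\mu\}$, which should yield
\begin{align*}
    \tfrac{1}{2}\Tr(GE_i^A) &= \tau(MM^T)_{ii} - \gamma_i^{-1} - ((\Gamma^{-1}-L)MM^T)_{ii} - \tfrac{1}{2}(H\diag(\beta)K)_{ii} - \tfrac{1}{2}(H\diag(\beta)KMM^T)_{ii},\\
    \tfrac{1}{2}\Tr(GE_i^C) &= \tfrac{\beta_i}{4}\|K_i\bx\|^2 - \tfrac{\delta^2}{\beta_i},
\end{align*}
with $\|K_i\bx\|^2 = \sum_k K_{ik}^2 + (KMM^TK^T)_{ii}$. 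Two structural facts will make the right-hand sides controllable: $(MM^T)_{ii}=\|M_i\|^2>0$ for every $i$, since a zero row of $M$ would force $e_i\in\Null(M^T)=\Ran(\one)$, impossible for $n\geq 2$; and $\sum_k K_{ik}^2>0$ for every $i$, since $K\one=\one$ excludes zero rows of $K$. Consequently, choosing $\tau$ large enough secures $\Tr(GE_i^A)>0$ simultaneously for all $i$ (the remaining terms being bounded and $\tau$-independent), and then choosing $\delta>0$ small enough secures $\Tr(GE_i^C)>0$ for all $i$ without disturbing anything else.

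The main technical obstacle is the bookkeeping behind the first display, where several $\tau$-independent $O(1)$ terms must be tracked simultaneously; what makes the argument succeed is the decoupling of scales---the dominant positive contribution in $\Tr(GE_i^A)$ is linear in $\tau$ with no competing $\tau$-dependence, so $\tau\gg 1$ handles constraint (i), while $\Tr(GE_i^C)$ is $\tau$-independent and handled by $\delta\ll 1$, which simultaneously preserves the linear independence of $\bu$ from $\bx$ and $\bz$. Once both families of traces are strictly positive and $G\succ 0$, Slater's condition is satisfied, and the conclusion follows from standard conic duality.
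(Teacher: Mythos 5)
Your proof is correct, and it takes a genuinely different route from the paper's.

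The paper proves Slater by \emph{realizing} a strictly feasible $G$ as the (rank-one) Gram matrix of a tuple $(\bz,\bx^+,\epsilon K\bx^+)$ that actually arises from running the algorithm on a specific problem instance, with operators $A_i = C_j = \epsilon\Id$ for small $\epsilon>0$. The crux there is an implicit-function-theorem argument: $\bx^+=\one$, $\epsilon=0$ solves the fixed-point range condition, and differentiating shows one can perturb to $\bx^+$ near $\one$ and $\epsilon>0$ small, which makes all interpolation inequalities strict. The resulting $G$ is then rank one, so the paper adds $\epsilon_2 I$ and argues the strict inequalities persist for $\epsilon_2$ small. Your construction skips the realization entirely: since the SDP \eqref{eq: SDP-PEP} is a relaxation, there is no need for the tuple $(\bz,\bx,\bu)$ to come from an actual operator instance---strict feasibility is a purely algebraic fact about $G$. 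By embedding $(\bz,\bx,\bu)$ in $\R^{2n+m-1}$ with orthonormal legs $\{\xi_k\}$, $\{\eta_i\}$, $\{\mu_j\}$, you get the positive definiteness of $G$ for free from the lower-block-triangular coordinate matrix
$\begin{bmatrix}\tau I&0&0\\ M&I&0\\ \tfrac12\diag(\beta)KM&\tfrac12\diag(\beta)K&\delta I\end{bmatrix}$,
and the constraints decouple cleanly: the $\tau$-linear, strictly positive term $\tau(MM^T)_{ii}$ dominates the $\tau$-independent remainder in $\Tr(GE^A_i)$, while $\Tr(GE^C_i)$ is $\tau$-independent and turned strictly positive by shrinking $\delta$. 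Both structural facts you invoke are sound: $(MM^T)_{ii}>0$ because a zero row of $M$ would put $e_i$ in $\Null(M^T)=\Ran(\one)$, and $\sum_kK_{ik}^2>0$ because $K\one=\one$ forbids zero rows of $K$. I verified your two trace identities against the block forms of $E^A_i$ and $E^C_i$ and they are exact.

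What each approach buys: the paper's argument is conceptually pleasing because the strictly feasible point is interpolable (it is an actual algorithm trajectory), which reassures that the relaxation in Lemma~\ref{lemma:NE_Dual} is not vacuous; but it pays with the implicit function theorem and a two-step perturbation ($\epsilon$, then $\epsilon_2$). Your argument is more elementary and self-contained---no differential machinery, no perturbation of a rank-deficient $G$---and it makes fully explicit which parts of the structure ($\Null(M^T)=\Ran(\one)$ and $K\one=\one$) are responsible for strict feasibility. Your opening observation that the dual SDP can never satisfy Slater (the $(z,z)$ block of $-A_{\boldsymbol\lambda}$ in \eqref{eq: Dual-matrix} is identically zero) is a nice touch that explains why primal strict feasibility is the only viable route; the paper does not make this point explicit.
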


In summary, we have shown that quasinonexpansiveness of $T$ for every monotone inclusion problem which can be formulated as in \eqref{eq: PEP-1}, is equivalent to non-positivity of \eqref{eq: SDP-PEP}, under the dimension assumptions. By Slater's condition we have shown strong duality, from which we conclude that the fixed-point iteration is quasinonexpansive for every inclusion problem if and only if Assumption~\ref{ass:NE} is satisfied. We are ready to establish the main result of this paper:

\begin{theorem}\label{thm:frugal_splitting_iff}
	If $\dim(\cH)\geq 2n+m-1$, a frugal splitting operator with minimal lifting $T$ as parameterized by \eqref{eq:abstract_fs_block} has the fixed-point encoding property and is $\theta$-averaged if and only if it can be written as
	\begin{equation}\label{eq:algorithm_1_nonpar}
		\bz^+:= T(\bz), \quad \text{with} \quad \left\{\begin{aligned}
			&\bx^+ = J_{\Gamma \bA}(\Gamma L \bx^+ - \Gamma \bC(K \bx^+)+ \Gamma M \bz),\\
			&\bz^{+} = \bz - \theta M^T\bx^+,
		\end{aligned}\right.
	\end{equation}
	where $M^T \in \R^{(n-1)\times n}$, $\Gamma := \diag(\gamma)\succ 0$, $L\in \real^{n\times n}$ and $H, K^T \in \R^{n\times m}$ satisfy:
	\begin{enumerate}[label=(\roman*)]
		\item\label{thm:splitting_nonexp_iff_1} $\Null (M^T) = \Ran(\one)$,
		\item\label{thm:splitting_nonexp_iff_2} $L\in \real^{n\times n}$ is strictly lower triangular such that $\one^T\left(\Gamma^{-1}-L\right)\one = 0$,
		\item\label{thm:splitting_nonexp_iff_3} $H$ and $K$ are causal in the sense of Definition~\ref{def:causal_pair} and satisfy $H^T\one = K\one = \one$,
		\item\label{thm:splitting_nonexp_iff_4} $2\Gamma^{-1}-L-L^T\succeq MM^T + \frac{1}{2}\left(H-K^T\right)\diag(\beta)\left(H^T-K\right)$.
	\end{enumerate}
\end{theorem}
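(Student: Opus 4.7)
The plan is to assemble the final statement from results already available: the fixed-point encoding characterization of Theorem~\ref{thm:FPE_iff}, the pinning down of $V$ via Lemma~\ref{lem:M_equals_V} together with Remarks~\ref{remark:removing_alpha}--\ref{remark:removing_theta}, and the PEP/SDP analysis packaged in Lemmas~\ref{lemma:NE_sufficient}--\ref{lemma:Slater}. Conditions \ref{thm:splitting_nonexp_iff_1}--\ref{thm:splitting_nonexp_iff_3} are structural (fixed-point encoding), whereas \ref{thm:splitting_nonexp_iff_4} is the $\theta$-averaged part.

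For the ``if'' direction, I would observe that \eqref{eq:algorithm_1_nonpar} is the instance of \eqref{eq:frugal_splitting_fpe} with $V = \theta M^T$; since $\Null(M^T) = \Ran(\one)$ and $\theta > 0$, the hypotheses of Theorem~\ref{thm:FPE_iff} are satisfied, giving fixed-point encoding. For $\theta$-averagedness, by Remark~\ref{remark:removing_theta} it is enough to show that the de-averaged operator $T_{(1)} := \theta^{-1}T - \theta^{-1}(1-\theta)\Id$ is nonexpansive. A direct computation shows $T_{(1)}$ has the same backward/forward step but updates $\bz^{+} = \bz - M^T \bx^+$, which is exactly the form \eqref{eq:frugal_splitting_fpe_plus}; Lemma~\ref{lemma:NE_sufficient} then yields nonexpansiveness under Assumption~\eqref{ass:NE}, which is precisely \ref{thm:splitting_nonexp_iff_4}.

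For the ``only if'' direction, I would first apply Theorem~\ref{thm:FPE_iff} to put $T$ in the canonical form \eqref{eq:frugal_splitting_fpe} and recover \ref{thm:splitting_nonexp_iff_1}--\ref{thm:splitting_nonexp_iff_3} directly. Next, $\theta$-averagedness of $T$ for every $A \in \cA_n$, $C \in \cC_m$, specialized to $\bC = 0$, reduces to the pure resolvent setting, so Lemma~\ref{lem:M_equals_V} forces $V = \alpha M^T$ with $\alpha > 0$. Absorbing $\alpha$ by the scaling equivalence of Remark~\ref{remark:removing_alpha} and then applying Remark~\ref{remark:removing_theta}, the remaining task collapses to showing that the nonexpansive operator $T_{(1)}$ with $V = M^T$ satisfies the matrix inequality~\ref{thm:splitting_nonexp_iff_4}.

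This last step is the core of the argument and the main obstacle, because simple linear or constant counterexamples for $\bC$ do not produce an inequality as tight as \eqref{ass:NE}. To overcome this, I would follow a performance-estimation strategy: Lemma~\ref{lemma:NE_Primal_necessary} recasts quasinonexpansiveness as non-positivity of the primal PEP~\eqref{eq: PEP-2}; Lemma~\ref{lemma:NE_Dual} lifts this to non-positivity of the SDP~\eqref{eq: SDP-PEP}, and it is here that the dimension hypothesis $\dim(\cH)\geq 2n+m-1$ is essential, since it guarantees that any admissible Gram matrix of that size is realized by genuine vectors in $\cH$, making the SDP relaxation tight; Lemma~\ref{lemma:Slater} verifies Slater's condition and hence strong duality; Lemma~\ref{lem:feasibility_dual} finally identifies feasibility of the dual~\eqref{eq: PEP-Dual} with Assumption~\eqref{ass:NE}. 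Chaining these four equivalences produces exactly \ref{thm:splitting_nonexp_iff_4}, completing the characterization. Note that quasinonexpansiveness is sufficient to drive the argument because, once the LMI is established, full nonexpansiveness follows from Lemma~\ref{lemma:NE_sufficient}, so the two notions coincide on this class.
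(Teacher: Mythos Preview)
Your proposal is correct and follows essentially the same route as the paper's proof: Theorem~\ref{thm:FPE_iff} for conditions \ref{thm:splitting_nonexp_iff_1}--\ref{thm:splitting_nonexp_iff_3}, Lemma~\ref{lem:M_equals_V} with $\bC=0$ and Remark~\ref{remark:removing_alpha} to pin $V$, Lemma~\ref{lemma:NE_sufficient} for sufficiency of \ref{thm:splitting_nonexp_iff_4}, and the chain Lemma~\ref{lemma:NE_Primal_necessary} $\to$ Lemma~\ref{lemma:NE_Dual} $\to$ Lemma~\ref{lemma:Slater} $\to$ Lemma~\ref{lem:feasibility_dual} for necessity. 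You are in fact more explicit than the paper about the role of Remark~\ref{remark:removing_theta} and about where the dimension hypothesis $\dim(\cH)\ge 2n+m-1$ enters. One tiny bookkeeping point: with your stated order (absorb $\alpha$ first so that $V=M^T$, then de-average), the operator $T_{(1)}$ would carry $V_{(1)}=\theta^{-1}M^T$ rather than $M^T$; to land exactly on \eqref{eq:frugal_splitting_fpe_plus} you should either de-average first and then absorb, or absorb $\alpha/\theta$ so that $V=\theta M^T$ before passing to $T_{(1)}$.
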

\begin{proof}
    From Theorem \ref{thm:FPE_iff}, the first three conditions are necessary and sufficient for $T$ being a fixed-point encoding. Thus, sufficiency follows from Lemma \ref{lemma:NE_sufficient}. Let $C=0$, then $V=M^T$ is necessary from Lemma \ref{lem:M_equals_V} and Remark \ref{remark:removing_alpha}. It only remains to establish Item \ref{thm:splitting_nonexp_iff_4} as a necessary conditions. This follows combining Lemma \ref{lemma:NE_Primal_necessary}, \ref{lemma:NE_Dual}, \ref{lem:feasibility_dual} and \ref{lemma:Slater}.
\end{proof}

Since $T$ is a $\theta$-averaged operator with $\theta \in (0, 1)$, whenever \eqref{eq:Inclusion} admits a solution, the corresponding fixed-point sequence converges weakly to some $\bz^\star \in \Fix(T)$. From Proposition \ref{prop:solution_map}, we know that the corresponding $\bx^\star$ obtained through evaluation of $T(\bz^\star)$ is such that $\bx^\star =\one x^\star$ with $x^\star$ solution to \eqref{eq:Inclusion}. However, the weak convergence of the sequence \( \{\bz^k\}_k \) does not, in general, imply the weak convergence of each sequence \( \{x_i^{k+1}\}_k \) for \( i \in \llbracket 1, n \rrbracket \), since resolvents are not necessarily weakly continuous. The following result bridges this gap by establishing that all sequences \( \{x_i^{k+1}\}_k \) converge weakly to the same solution \( x^\star \) of~\eqref{eq:Inclusion}, without requiring additional assumptions. Moreover, we derive a rate of convergence to consensus, quantified through the \emph{variance}, defined as:
\begin{equation}
	\Var (\bx) := \frac{1}{n} \sum_{i=1}^n \|x_i - \bar{x}\|^2, \quad \text{where} \quad \bar{x} := \frac{1}{n} \sum_{i=1}^n x_i, \quad \text{for any} \ \bx := (x_1, \dots, x_n) \in \cH^n.
\end{equation}

\begin{proposition}\label{prop:convergence_algorithm_1}
Assume that there exists a solution to \eqref{eq:Inclusion}. Let $\{\bz^{k}\}_k$ be the sequence generated by a frugal splitting method parameterized as in \eqref{eq:algorithm_1_nonpar} and $\{\bx^{k+1}\}_k$ the corresponding sequence obtained by evaluation of $T$. Then,
	\begin{enumerate}[label=(\roman*)]
		\item\label{item:convergence_algorithm_1_1} For all $i \in \llbracket1, n\rrbracket$, $x_i^{k+1}\rightharpoonup x^\star$, where $x^\star$ solves \eqref{eq:Inclusion},
		\item\label{item:convergence_algorithm_1_2} $\bz^k \rightharpoonup \bz^\star$ such that $\bx^\star:=x^\star \one = J_{\Gamma \bA}(\Gamma L \bx^\star - \Gamma H\bC(K \bx^\star)+ \Gamma M \bz^\star)$,
		\item\label{item:convergence_algorithm_1_3} $\Var(\bx^{k+1}) = o(k^{-1})$ as $k\to +\infty$.
	\end{enumerate}
\end{proposition}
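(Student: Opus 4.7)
The strategy rests on three ingredients: the $\theta$-averagedness from Theorem~\ref{thm:frugal_splitting_iff}; the structural identities $\Null(M^T)=\Ran(\one)$ and $\one^T(\Gamma^{-1}-L)\one = 0$; and the regularizing effect of cocoercivity (which turns weak convergence of arguments into strong convergence of images).

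\emph{Step 1 (fixed-point convergence and item~\ref{item:convergence_algorithm_1_3}).} Existence of a solution to \eqref{eq:Inclusion} gives $\Fix(T)\neq \emptyset$ by Theorem~\ref{thm:FPE_iff}. Theorem~\ref{thm:frugal_splitting_iff} then guarantees $T$ is $\theta$-averaged with $\theta\in(0,1)$, and standard results for averaged operators yield $\bz^k \rightharpoonup \bz^\star \in \Fix(T)$ together with the refined rate $\|\bz^{k+1}-\bz^k\|^2 = o(1/k)$. The algorithmic update reads $\bz^{k+1}-\bz^k = -\theta M^T \bx^{k+1}$, hence $\|M^T \bx^{k+1}\|^2 = o(1/k)$. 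Because $M$ is full rank with $\Null(M^T)=\Ran(\one)$, there exists $\sigma>0$ such that $\|M^T \bx\|^2 \geq \sigma^2 \cdot n\, \Var(\bx)$ for every $\bx\in\cH^n$, proving \ref{item:convergence_algorithm_1_3}.

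\emph{Step 2 (boundedness and strong convergence of $\bu^{k+1}$).} Causality of $L$, $H$, $K$ (Proposition~\ref{prop:causal_iff}) makes each $x_i^{k+1}$ a resolvent of an affine combination of the bounded sequence $\bz^k$ and the previously computed $\{x_j^{k+1}\}_{j<i}$. Nonexpansiveness of the resolvents then yields boundedness of $\{\bx^{k+1}\}$ by induction on $i$. Along any subsequence $\bx^{k_\ell+1} \rightharpoonup \bar\bx$, the variance rate forces $\bar\bx\in \Ran(\one)$, say $\bar\bx = \one \hat x$, and in fact $\bx^{k_\ell+1}-\one\bar x^{k_\ell+1} \to 0$ strongly. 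Since each $C_j$ is $\beta_j^{-1}$-cocoercive, $\beta_j\|C_j(y)-C_j(x)\|^2 \leq \beta_j \langle C_j(y)-C_j(x), y-x\rangle$ upgrades weak convergence of arguments to strong convergence of images, hence $\bu^{k_\ell+1} = \bC(K\bx^{k_\ell+1}) \to \bC(\one \hat x)$ \emph{strongly}.

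\emph{Step 3 (identification via demiclosedness).} Rewriting the evaluation equation $\ba^{k+1} = M\bz^k - (\Gamma^{-1}-L)\bx^{k+1} - H\bu^{k+1}$ and passing to the weak limit along the subsequence gives $\ba^{k_\ell+1} \rightharpoonup \hat\ba := M\bz^\star - (\Gamma^{-1}-L)\one \hat x - H\bC(\one \hat x)$. To conclude $\hat\ba \in \bA(\one \hat x)$, I invoke the standard maximal-monotone demiclosedness criterion, which requires $\limsup_\ell \langle \ba^{k_\ell+1}, \bx^{k_\ell+1}\rangle \leq \langle \hat\ba, \one \hat x\rangle$. Substituting the system and using (a) $\langle M\bz^k, \bx^{k+1}\rangle = \langle \bz^k, M^T\bx^{k+1}\rangle \to 0$ by Step~1, (b) the strong convergence of $\bu^{k+1}$ paired with weak convergence of $H^T\bx^{k+1}$, and (c) the scalar identity $\one^T(\Gamma^{-1}-L)\one = 0$ combined with $\bx^{k_\ell+1}-\one\bar x^{k_\ell+1} \to 0$ strongly (which forces $\langle (\Gamma^{-1}-L)\bx^{k_\ell+1}, \bx^{k_\ell+1}\rangle \to 0$), the criterion holds with equality. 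Thus $(\bz^\star, \one \hat x, \hat\ba, \bC(K\one \hat x))$ solves \eqref{eq:abstract_fs_inclusion} at $\bz^\star$, and uniqueness of this evaluation together with Proposition~\ref{prop:solution_map} forces $\hat x = x^\star$. Since every weak cluster point equals $\one x^\star$, the whole sequence $\bx^{k+1}$ converges weakly to $\bx^\star = \one x^\star$, giving \ref{item:convergence_algorithm_1_1}--\ref{item:convergence_algorithm_1_2}.

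\emph{Main obstacle.} The core difficulty is propagating weak convergence through the non-weakly-continuous resolvent step: demiclosedness for a general maximal monotone operator is not free along two weakly convergent sequences. Two structural features make the argument close: cocoercivity turns weak into strong convergence for the forward terms, and the identity $\one^T(\Gamma^{-1}-L)\one = 0$ together with the vanishing variance collapses the only quadratic term in the $\limsup$ computation. Without either, weak convergence of each $x_i^{k+1}$ would require genuinely stronger hypotheses on the operators.
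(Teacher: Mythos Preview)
Your Step~1 is fine and is essentially what the paper does for item~\ref{item:convergence_algorithm_1_3}: averagedness gives $\|\bz^{k+1}-\bz^k\|^2=o(1/k)$, hence $\|M^T\bx^{k+1}\|^2=o(1/k)$, and $\Null(M^T)=\Ran(\one)$ turns this into the variance rate.

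The genuine gap is in Step~2. The assertion that cocoercivity ``upgrades weak convergence of arguments to strong convergence of images'' is false. Take $\cH=\ell^2$ and $C=\Id$, which is $1$-cocoercive: $e_k\rightharpoonup 0$ but $C(e_k)=e_k\not\to 0$. All you can extract from the cocoercivity inequality $\|C_j(y)-C_j(x)\|^2\le\beta_j\langle C_j(y)-C_j(x),y-x\rangle$ is control of $\|C_j(y_\ell)-C_j(\hat x)\|$ by $\langle C_j(y_\ell)-C_j(\hat x),y_\ell-\hat x\rangle$, and the latter does \emph{not} vanish along a merely weakly convergent sequence $y_\ell\rightharpoonup\hat x$. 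Consequently you have neither strong nor even weak convergence of $\bu^{k_\ell+1}=\bC(K\bx^{k_\ell+1})$ to $\bC(\one\hat x)$: cocoercive operators are not weak-to-weak sequentially continuous in general. This breaks Step~3 in two places. First, the weak limit $\hat\ba$ of $\ba^{k_\ell+1}$ is not identified, since $H\bu^{k_\ell+1}$ need not converge weakly to $H\bC(\one\hat x)$. Second, even granting that, your verification of the $\limsup$ criterion uses item~(b), pairing ``strong convergence of $\bu^{k+1}$'' with weak convergence of $H^T\bx^{k+1}$; without the former, the cross term $\langle\bu^{k_\ell+1},H^T\bx^{k_\ell+1}\rangle$ is a product of two only-weakly-convergent sequences and its limit is uncontrolled. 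Your computations in (a) and (c) are correct, but they are not enough on their own.

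The paper avoids this obstacle by taking a different route: it rewrites \eqref{eq:algorithm_1_nonpar} (after the rescaling $\hat M=M/\sqrt{2}$) as a \emph{degenerate preconditioned proximal point} iteration $\bu^{k+1}=\bu^k+\hat\theta(J_{\cM^{-1}\cA}(\bu^k)-\bu^k)$ for a block operator $\cA$ and a degenerate preconditioner $\cM$, and shows---using precisely the matrix inequality~\eqref{ass:NE}---that the combined operator $\cA_L:=\bA+H\bC K+\Gamma^{-1}-L-\hat M\hat M^T$ is maximal monotone. The weak convergence of $\{x_i^{k+1}\}_k$ and $\{\bz^k\}_k$ then follows from \cite[Corollary~2.10]{bcln22}, which has the required demiclosedness built into its abstract framework rather than relying on any continuity of $\bC$.
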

The proof of Proposition~\ref{prop:convergence_algorithm_1} follows the same \emph{degenerate preconditioned proximal point} approach employed in \cite{bcn24}, and we provide it in the appendix. Note that employing a Fast-Krasnoselskii--Mann approach \cite{bn23}, as done in \cite{bcf24} for the pure resolvent case, we can also obtain the improved rate $\Var(\bx^{k+1}) = o(k^{-2})$ as $k\to +\infty$ at the cost of sacrificing variable minimality. 

In the following section, we show how to turn \eqref{eq:algorithm_1_nonpar} into Algorithm~\ref{alg:simple_splitting_FPE_introduction}, a general family of algorithm, indeed as general as \eqref{eq:algorithm_1_nonpar}, but for which Item~\ref{thm:splitting_nonexp_iff_4} is satisfied by construction. We refer to it as a \emph{parameterization} of the family of algorithms in \eqref{eq:algorithm_1_nonpar} as it only consists in choosing $3$ sets of matrices.

\section{Algorithm 1: Description, reformulation and special cases}\label{sec:Problem_formulation_and_parameterization}

We first show that Algorithm~\ref{alg:simple_splitting_FPE_introduction} and \eqref{eq:algorithm_1_nonpar} cover the same family of algorithms:
\begin{proposition}\label{prop:equivalence_with_alg_1}
	A frugal splitting operator with minimal lifting as in \eqref{eq:abstract_fs_block} has the fixed-point encoding property and is $\theta$-averaged if and only if it can be defined as in Algorithm~\ref{alg:simple_splitting_FPE_introduction}.	
\end{proposition}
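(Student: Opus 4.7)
The plan is to verify the proposition by exhibiting an explicit bijection between the parameterization appearing in Algorithm~\ref{alg:simple_splitting_FPE_introduction} and the one singled out by Theorem~\ref{thm:frugal_splitting_iff}, since the latter already characterizes all operators satisfying \refallprop. The task thus reduces to showing that \eqref{eq:algorithm_1_nonpar} under conditions (i)-(iv) of Theorem~\ref{thm:frugal_splitting_iff} admits a representation as in Algorithm~\ref{alg:simple_splitting_FPE_introduction}, and conversely.

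For the implication ``Algorithm~\ref{alg:simple_splitting_FPE_introduction} $\Rightarrow$ \eqref{eq:algorithm_1_nonpar}'', starting from parameters $(M, P, H, K)$ I would set $S := MM^T + PP^T + \tfrac{1}{2}(H-K^T)\diag(\beta)(H^T-K)$, $\gamma := 2\diag(S)^{-1}$, $\Gamma := \diag(\gamma)$, and $L := -\slt(S)$. Since $S$ is symmetric, a direct computation gives $2\Gamma^{-1} - L - L^T = S$, so condition~(iv) of Theorem~\ref{thm:frugal_splitting_iff} reduces to $PP^T \succeq 0$, which is automatic, while $\one^T S \one = 0$ follows from $M^T\one = 0$, $P^T\one = 0$ and $(H^T - K)\one = 0$, giving~(ii). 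Conditions~(i) and~(iii) are imposed directly. A line-by-line inspection of the inner loop then shows that its update coincides with \eqref{eq:algorithm_1_nonpar} under these identifications, with causality of $(H, K^T)$ ensuring that the truncated sums $\sum_{h=1}^{i-1}K_{jh}x_h^{k+1}$ in Algorithm~\ref{alg:simple_splitting_FPE_introduction} agree with the full sums $(K\bx^+)_j$ whenever the corresponding $H_{ij}$ is nonzero.

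For the converse direction, given $(M, \Gamma, L, H, K)$ satisfying (i)-(iv) I would define
\begin{equation*}
	N := (2\Gamma^{-1} - L - L^T) - MM^T - \tfrac{1}{2}(H-K^T)\diag(\beta)(H^T-K),
\end{equation*}
which is positive semidefinite by~(iv). Using $M^T\one = 0$, $(H^T - K)\one = 0$, and $\one^T(\Gamma^{-1} - L)\one = 0$ one obtains $\one^T N \one = 0$, and the standard implication $N \succeq 0,\ \one^T N\one = 0 \Rightarrow N\one = 0$ then yields $\Ran(N) \subseteq \Null(\one^T)$ and in particular $\mathrm{rank}(N) \leq n-1$. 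Hence $N$ factors as $PP^T$ with $P \in \R^{n\times(n-1)}$ (padding with zero columns if necessary) and $P^T\one = 0$ since the columns of $P$ can be chosen in $\Ran(N)$. Plugging $(M, P, H, K)$ into Algorithm~\ref{alg:simple_splitting_FPE_introduction} reconstructs the original $\Gamma$ from the diagonal of $S = 2\Gamma^{-1} - L - L^T$ and the original $L$ from $-\slt(S)$ thanks to the strict lower triangularity of $L$, so Algorithm~\ref{alg:simple_splitting_FPE_introduction} instantiates \eqref{eq:algorithm_1_nonpar} with the given data.

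The main obstacle is the positive semidefinite factorization step: establishing $N\one = 0$ from $N \succeq 0$ and $\one^T N\one = 0$ (which follows from the Cauchy-Schwarz-type estimate $|y^T N \one|^2 \leq (y^T N y)(\one^T N\one) = 0$) and then realizing $N = PP^T$ with a $P$ whose columns lie in $\Ran(N) \subseteq \Null(\one^T)$, typically via a rank-revealing factorization. All remaining verifications are elementary matrix identities exploiting the strict lower triangularity of $L$ and the column-wise constraints on $M$, $P$, $H$, and~$K$.
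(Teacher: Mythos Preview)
Your proposal is correct and follows essentially the same route as the paper: both directions are handled by setting $S=2\Gamma^{-1}-L-L^T$ (resp.\ $L=-\slt(S)$, $\Gamma=2\diag(S)^{-1}$) and factoring the slack $N=S-MM^T-\tfrac{1}{2}(H-K^T)\diag(\beta)(H^T-K)\succeq 0$ as $PP^T$ with $P^T\one=0$. You in fact supply more detail than the paper on why $N\one=0$ and how to choose $P$ with columns in $\Ran(N)$, whereas the paper simply asserts the existence of such a $P$.
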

\begin{proof}
	To show the result, we only need to show that from any instance of Algorithm~\ref{alg:simple_splitting_FPE_introduction} we obtain an instance of \eqref{eq:algorithm_1_nonpar} and vice-versa. So let us consider an instance of \eqref{eq:algorithm_1_nonpar} parametrized by $M$, $\Gamma$, $L$ and $H, K$.  We need to define $S$ and $P$. To do so, we pick
	\begin{equation*}
		S:= 2 \Gamma^{-1}- L - L^T, \quad \text{and} \quad \bar P:= S - MM^T - \frac{1}{2}(H - K^T)\diag(\beta) (H^T - K),
	\end{equation*}
	and take any $P\in \R^{n\times (n-1)}$ with $\Ran(\one) \subset \Null(P^T)$ such that $PP^T = \bar P$, which is possible since from Theorem~\ref{thm:frugal_splitting_iff}\ref{thm:splitting_nonexp_iff_1}, \ref{thm:splitting_nonexp_iff_2} and \ref{thm:splitting_nonexp_iff_3} $\Ran(\one) \in \Null(\bar P)$ and $\bar P \succcurlyeq 0$ from Theorem~\ref{thm:frugal_splitting_iff}\ref{thm:splitting_nonexp_iff_4}. The matrices $M$, $P$, $H, K$ and $S$ provide an instance of Algorithm~\ref{alg:simple_splitting_FPE_introduction}.
	
	Conversely, if $M$, $P$, $H, K$ and $S$ parametrize Algorithm~\ref{alg:simple_splitting_FPE_introduction}, define
	\begin{equation*}
		L := -\slt(S), \quad \text{and} \quad \Gamma := 2\diag(S)^{-1}. 
	\end{equation*}
	Then, Algorithm~\ref{alg:simple_splitting_FPE_introduction} writes as in \eqref{eq:algorithm_1_nonpar}, and since $2\Gamma^{-1} - L - L^T = S$ we have Theorem~\ref{thm:frugal_splitting_iff}\ref{thm:splitting_nonexp_iff_1}, \ref{thm:splitting_nonexp_iff_2}, while Item~\ref{thm:splitting_nonexp_iff_3} follows by construction since $S\succeq MM^T + \frac{1}{2}(H - K^T)\diag(\beta) (H^T - K)$. 
\end{proof}

Algorithm~\ref{alg:simple_splitting_FPE_introduction} provides a constructive way to build instances of \eqref{eq:algorithm_1_nonpar} simply by choosing 4 matrices, which reduce to only 2 in the case of pure resolvent splittings, from a beginning of 10. Each of these matrices have specific roles:

\begin{enumerate}[label=(\roman*)]
	\item $M\in\R^{n\times (n-1)}$ defines how the information stored in $\bz^k$ from previous iterations affect the input to the evaluation of each resolvent. Moreover, $M^T$ decides how the resolvent evaluations affect what is stored to the next iteration in $\bz^{k+1}$. Therefore, as evident from \eqref{eq:algorithm_1_nonpar}, both $M$ and $M^T$ are evaluated in each iteration. In Section~\ref{sec:lifted_algorithm_formulation}, we show that at the cost of adding one additional variable, we can in fact find a method that only uses $MM^T$, which is particularly useful for distributed optimization and performance enhancements.
	
	\item The matrix $P\in\R^{n\times(n-1)}$ does not enter in the algorithm, but is there to enforce that $S\succeq MM^T+\tfrac{1}{2}(H-K^T)\diag(\beta)(H^T-K)$ and that $\Ran(\one)\subset \Null(S)$. It plays the role of a meta-step size since the larger $P$ is, the larger $S$ is and the smaller the step sizes in $\gamma$ become. $P$ is included to capture all methods satisfying \refallprop, but in practice, we suggest picking $P=0$.
	
	\item \( H\in\R^{n\times m} \) and \( K\in\R^{m\times n} \) define how the cocoercive terms enter into the algorithm. Specifically, for each \( j \in \llbracket1, m\rrbracket \), the \( j \)th row of \( K \) assigns averaging weights to combine the outputs of previously computed resolvent evaluations within the iteration to form the input to $C_j$. Similarly, the \( j \)th column of \( H \) provides averaging weights that split the \( C_j \) output among inputs to resolvent evaluations that are yet to be performed within the iteration. In \cref{sec:numerics} we present heuristics for selecting $H$ and $K$ to achieve excellent performances. 
\end{enumerate}

\subsection{Lifted algorithm reformulation}\label{sec:lifted_algorithm_formulation}

Through the change of variables $\bw^k:=M\bz^k$ in \eqref{eq:algorithm_1_nonpar}, one can directly select $MM^T$ and evaluate it in the algorithm instead of selecting $M$ and evaluating $M$ and $M^T$. Denoted by $\mathcal{L}:=MM^T$, the resulting method reads as:
\begin{equation}\label{eq:alg_MM_lifted}
	\left\{\begin{aligned}
		&\bx^{k+1} = J_{\Gamma \bA}(\Gamma L\bx^{k+1} - \Gamma H\bC(K\bx^{k+1}) + \Gamma \bw^k ),\\
		&\bw^{k+1} = \bw^k-\theta \cL\bx^{k+1},
	\end{aligned}\right. 
\end{equation}
and generates a weakly converging sequence provided that $\bw^0 \in \cH^n$ is such that $\one^T \bw^0=0$, e.g., $\bw^0=0$. This equivalent algorithm may reduce the computational cost as:
\begin{enumerate}
	\item It may be cheaper to evaluate $\cL$ as one matrix than $M$ and $M^T$ individually. 
	\item By picking $\mathcal{L}$ as the graph laplacian of a connected graph, we can easily control its sparsity pattern, allowing simple distributed protocols, cf.~Section~\ref{sec:special_cases_adapted_forward_backward}.
	\item We can directly set:
	\begin{equation}\label{eq:define_lap}
		\mathcal{L} := n I_n - \one\one^T,
	\end{equation}
	i.e., the graph laplacian of a fully connected graph, which provides great performance in practice, cf.~Section~\ref{sec:numerics}.  
\end{enumerate}
However, the resulting algorithm stores $\bw^k \in \cH^n$ between iterations, thereby sacrificing the minimal lifting property. Of course, recovering $M$ from $\mathcal{L}$ is feasible, but requires a spectral decomposition, which can be unstable and expensive if $n$ is large. This why we recommend using this formulation if possible. Note that there is no loss of generality in doing so since the decomposition $\mathcal{L}=MM^T$ with $M$ full-rank is unique modulo orthogonal transformation and all corresponding algorithms with minimal lifting are equivalent in the sense of Definition~\ref{def:equivalence}, see \cite[Remark 3.4]{bcn24}.

\subsection{Special cases}\label{sec:special_cases}

In this section, we collect some relevant special cases of Algorithm~\ref{alg:simple_splitting_FPE_introduction}, and provide a new one with important practical motivations.
 
\subsubsection{Douglas--Rachford and Davis--Yin}
Let us start with the case $n=2$, and $m\geq 0$. For fixed-point encoding and causality according to Section~\ref{sec:causality}, we must have the matrices:
\begin{equation}
	K^T = \begin{bmatrix}
		1 & 1 & \dots & 1\\
		0 & 0 & \dots & 0
	\end{bmatrix},
	\quad 
	H = \begin{bmatrix}
		0 & 0 & \dots & 0\\
		1 & 1 & \dots & 1
	\end{bmatrix}, \quad \text{and} \quad 
		L = \begin{bmatrix}
			0 & 0\\
			\frac{1}{\gamma_1} + \frac{1}{\gamma_2} & 0
		\end{bmatrix}
\end{equation}
for $\Gamma = \diag(\gamma_1, \gamma_2)$ since $K\one = H^T\one = \one$ and $\one^T(\Gamma^{-1}-L)\one = 0$. Moreover, due to the nullspace condition on $M$ we must have
$M^T = \lambda \begin{bmatrix}
	1 & -1
\end{bmatrix}$ for some $\lambda \in \real$. Let us now denote by $\hat \beta:=\beta_1 + \dots + \beta_m$ and $\hat \gamma_i := \gamma_i^{-1}$ for $i=1, 2$. To satisfy Theorem~\ref{thm:frugal_splitting_iff}\ref{thm:splitting_nonexp_iff_4} we must have
\begin{equation}\label{eq:condition_davis_yin_1}
	\begin{aligned}
		0&\preceq  2\Gamma^{-1}-L-L^T-MM^T-\frac{1}{2}\left(H-K^T\right)\diag(\beta)(H^T-K) = \\
		&= \begin{bmatrix}
			2 \hat \gamma_1 & -(\hat \gamma_1 + \hat \gamma_2)\\
			-(\hat \gamma_1 + \hat \gamma_2) & 2 \hat \gamma_2
		\end{bmatrix} -  \lambda^2 \begin{bmatrix} 1 & -1\\
			-1 & 1\end{bmatrix} - \frac{\beta}{2}\begin{bmatrix}
			1 & -1\\
			-1 & 1
		\end{bmatrix}.
	\end{aligned}
\end{equation}
Let us assume first that $\hat \beta = 0$. Then, \eqref{eq:condition_davis_yin_1} implies $\lambda^2 \leq 2 \min\{\hat \gamma_1, \hat \gamma_2\}$ and
\begin{equation}
	 (2 \hat \gamma_1 - \lambda^2)(2 \hat \gamma_2 - \lambda^2)- (\hat \gamma_1 - \hat \gamma_2 - \lambda^2)^2 = 0,
\end{equation}
which, after elementary calculations, yields $\hat \gamma_1 = \hat \gamma_2$, i.e., $\gamma_1 = \gamma_2 = \gamma >0$. Let us now consider once again \eqref{eq:condition_davis_yin_1} with $\hat \beta > 0$. This condition now yields $\frac{2}{\gamma}  - \lambda^2 - \frac{\beta}{2}\geq 0$, i.e., $\frac{4-\beta \gamma}{2}\geq \lambda^2 \gamma$. Finally, applying the substitution $\bar z := \gamma \lambda z$ and denoting by $\bar \theta := \lambda^2 \gamma$, we obtain:
\begin{equation}
	\left\{
	\begin{aligned}
		&x_1^{k+1} = J_{\gamma A_1} (\bar z^k),\\
		&x_2^{k+1} = J_{\gamma A_2}(2 x_1^{k+1} - \gamma(C_1(x_1^{k+1}) + \dots + C_m(x_1^{k+1})) - \bar z^k),\\
		&\bar z^{k+1} = \bar z^k - \bar \theta (x_2^{k+1} - x_1^{k+1}),
	\end{aligned}\right.
\end{equation}
which is averaged if and only if $\bar \theta>0$ a $\gamma>0$ are chosen to satisfy $\frac{4-\beta \gamma}{2}\geq  \bar \theta >0$. These conditions, which in particular allow for \emph{twice larger step sizes} than in the original paper \cite[Theorem 2.1]{dy17}, i.e., $\gamma \leq \frac{4}{\beta}$ (but at the cost of decreasing $\bar \theta$), were obtained in \cite[Section 3]{bcln22} and later in \cite{at22}. Here, they follow immediately from Theorem~\ref{thm:frugal_splitting_iff}.

\subsubsection{Graph-Douglas--Rachford}\label{sec:special_cases_graph_drs}
We now consider the case $n >2$, but let us start first with $m=0$. In this case, we only need to pick $M$ and $P$. The choice of two matrices with a specified kernel reflects in \cite{bcn24} with choice of two directed connected graphs\footnote{Note that we inverted the role of $\mathcal{G}$ and $\mathcal{G}'$ in relation to \cite{bcn24}.} on (the same) $n$ nodes, $\mathcal{G}:=(\cN, \cE)$ and $\mathcal{G}':=(\cN, \cE')$ with $\cE\subset \cE'$ and the property
\begin{equation}\label{eq:graph_property}
(i, j)\in \cE \implies i<j, \quad \text{and} \quad (i, j)\in \cE' \implies i<j.
\end{equation}
Then, the matrices $M$ and $P$ are defined as follows:
\begin{enumerate}
	\item\label{item:point_1_graphdrs} $M$ is obtained as a full rank factorization of the graph laplacian of $\mathcal{G}$ as $\mathcal{L}=MM^T$. Note that $\Null (M^T) = \Ran (\one)$ follows by construction since $\Null (\mathcal{L})=\Ran (\one)$ by definition of graph laplacian, and the fact that $\mathcal{G}$ is connected.
	\item $P \in \R^{n \times (n-1)}$ is any matrix such that $\overline{\mathcal{L}}=PP^T$, where $\overline{\mathcal{L}}$ is the graph laplacian of $\mathcal{G}'\setminus \mathcal{G}:=(\cN, \cE'\setminus \cE)$. In this case, since $\overline{\cL}$ is not necessarily connected, we only have $\Ran(\one)\subset \Null(\overline{\cL})$. Hence, $P$ might not be full rank, but always satisfies $\Ran(\one)\subset \Null(P^T)$.
\end{enumerate}
With these choices, the order of evaluations of the resolvents is prescribed by $\cG'$ since
\begin{equation}
	S=MM^T+ PP^T = \cL + \overline{\cL} = \cL', \quad \text{i.e., the graph laplacian of $\mathcal{L}'$}.
\end{equation}
 The method is shown to encompass several frugal resolvent methods with minimal lifting such as Ryu's three-operator splitting \cite{Ryu}, Malitsky--Tam splitting \cite{mt23}, two \emph{parallel} extensions of DRS \cite{ckch23, RyuYin_parallel}, the \emph{sequential} DRS \cite{bcln22}, among other methods. The possibility to consider general matrices $M$ and $P$ satisfying the assumptions in Algorithm~\ref{alg:simple_splitting_FPE_introduction} is mentioned in \cite[Remark 2.7]{bcn24}, and discussed in detail in the thesis \cite{chenchene_thesis}.

\subsubsection{Forward-backward devised by graphs}\label{sec:special_cases_graph_forward_backward} In the recent work \cite{acl24}, Arag\'{o}n Artacho, Campoy and L\'{o}pez-Pastor, provide an adaptation of graph-DRS in Section~\ref{sec:special_cases_graph_drs} to solve instances of \eqref{eq:Inclusion} with $m=n-1$ and $\beta_1=\dots=\beta_{n-1}=\beta$, referred to as forward-backward devised by graphs (GFB). The method requires choosing three graphs $\mathcal{G}$, $\mathcal{G}'$ and $\mathcal{G}_f$ on the same set of nodes, with $\cE \subset \cE'$, $\cE_f \subset \cE'$, and satisfying \eqref{eq:graph_property}. The graphs $\cG$ and $\cG'$ play a similar role to those of graph-DRS, while $\cG_f$ handles the evaluation of the forward terms, and must satisfy:
\begin{equation}
	(i, j) \in \cE_f \ \text{and} \ (i', j) \in \cE_f \implies i = i'.
\end{equation}
The GFB method can be obtained as a special case of Algorithm~\ref{alg:simple_splitting_FPE_introduction} by picking: $M$ as for the graph-DRS method, see Point~\ref{item:point_1_graphdrs} in Section~\ref{sec:special_cases_graph_drs}. Then, $H$ and $K$ such that
\begin{equation}
	H_{ij} = \begin{cases}
		1 & \text{if} \ i = j - 1,\\
		0 & \text{else},
	\end{cases} \quad \text{and} \quad
	K_{ij} = 
	\begin{cases}
		1 & \text{if} \ (j, i) \in \cE_f,\\
		0 & \text{else}.
	\end{cases}
\end{equation}
Finally, $P$ is picked in such a way that $S=\cL + \overline{\cL} + \frac{\beta}{2} \cL' =(1 + \frac{\beta}{2})\cL'$. A simple calculation shows that $\frac{1}{2}(H-K^T)\diag(\beta) (H^T - K) = \frac{\beta}{2}\cL_f$, where $\cL_f$ is the graph laplacian of $\cG_f$. Therefore, the matrix $P$ in this case is such that $PP^T=\overline{\cL} + \frac{\beta}{2} \cL' - \frac{\beta}{2}\cL_f = \overline{\cL} + \frac{\beta}{2} (\cL' - \cL_f)$. This shows that even if $\cG=\cG'=\cG_f$ the matrix $P$ need not be necessarily zero, which, as we will see in Section~\ref{sec:num_testing_P}, has a negative influence on performances

As shown in \cite[Section 4.2]{acl24}, this method encompasses several existing methods in the literature. Special cases include the \emph{ring forward-backward} by Arag\'{o}n-Artacho, Malitsky, Tam and Torregrosa-Belén \cite{amtt23}, the \emph{sequential} and \emph{parallel} Davis--Yin by two of the authors and Bredies, Lorenz \cite{bcln22}, and the so-called \emph{four-operator splitting} by Zong, Tang, and Zhang \cite{ztz23}.

\subsubsection{Adapted graph forward-backward}\label{sec:special_cases_adapted_forward_backward} In this section, we propose yet another variant of Algorithm~\ref{alg:simple_splitting_FPE_introduction}, this time with $m=(n-1)\frac{n}{2}$. We pick once again $M$ such that $\cL=MM^T$ is the graph laplacian of a graph $\cG$ satisfying \eqref{eq:graph_property}, and set $P=0$. To handle the forward terms, we suppose that inside the $i$th resolvent (for $i > 1$), we evaluate at most $i-1$ forward terms---renamed $C_{i,1}, \dots, C_{i, i-1}$ in this setting---on the corresponding $x_h^{k+1}$ such that $(h, i)\in \cE$. These rules univocally define $H$ and $K$, which we do not display explicitly for the sake of space\footnote{Note that thay can be easily deduced from Algorithm~\ref{alg:distributed_algorithm}.}. These choices yield $W:=\frac{1}{2}(H-K^T)\diag(\beta)(H^T - K)$ such that
\begin{equation}
	\bx^T W \bx = \frac{1}{2}\sum_{i=2}^n\sum_{(h, i)\in \cE} \beta_{ih}\|x_i - x_h\|^2,\quad \text{for all} \ \bx = (x_1,\dots, x_N)\in \R^n.
\end{equation}
In particular, denoted by $d_i$ the degree of node $i$ in $\cG$, the components of $\gamma = 2\diag(S)^{-1}$ and $S=\cL + W$ read as: 
\begin{equation}
	\gamma_i := \frac{2}{d_i + \frac{1}{2}\left(\sum_{(h, i)\in \cE}\beta_{ih} + \sum_{(i, j)\in \cE} \beta_{ji}\right)}, \quad \text{and} \quad S_{ih} = 
	\begin{cases}
		- 1 - \beta_{ih} & \text{if} \ (h, i) \in \cE,\\
		0 & \text{else}.
	\end{cases} 
\end{equation}
This setting is particularly interesting as it yields Algorithm~\ref{alg:distributed_algorithm} (displayed in the lifted variant according to Section~\ref{sec:lifted_algorithm_formulation}), which can be implemented in a fully decentralized distributed fashion without the need of a global cocoercivity constant. It thus extends both \cite{acl24} and \cite{naldi_thesis}.

\begin{remark}
	While we refrain from displaying the distributed protocol for Algorithm~\ref{alg:distributed_algorithm}, which can be obtained by adapting, e.g., \cite[Algorithm 4.2]{bcn24} to this setting, we would like to highlight some of its key features:
	\begin{enumerate}
		\item Each agent possesses its local variable $w_i^k \in \cH$, and each operator $C_{i,h}$ for all $(h, i)\in \cE$.
		\item To update $x_i^{k+1}$, agent $i$ only requires the knowledge of: $\beta_{ih}$ and $\beta_{ji}$ for all $(h, i)\in \cE$ and $(i, j)\in \cE$, which can be obtained by the neighbors, as well as the knowledge of $x_h^{k+1}$ for all $(h, i)\in \cE$ which can also obtained by its $h$ neighbor.
		\item To update $z_i^{k+1}$ the method requires a second round of communication, as not only $x_h^{k+1}$ with $(h, i)\in \cE$ are needed, but also all $x_j^{k+1}$ for $(i, j)\in \cE$, i.e., $j>i$. This complies with \cite[Algorithm 4.2]{bcn24}.
		\item Agent $i$ updates $w_i^{k+1}$ following the direction $\frac{1}{d_i}\sum_{i\sim j}x_j^{k+1} - x_i^{k+1}$, i.e., it averages the solution estimates provided by its neighbors and takes a step in that direction---the higher its degree, the more it trusts the average opinion of its neighbors.
	\end{enumerate}
	Observe as well in Algorithm~\ref{alg:distributed_algorithm} the effect of the heterogeneity of data: Only the agents that are adjacent to agents that evaluate forward steps have $\hat \gamma_i > d_i$, i.e., smaller local steps, while all the others have $\hat \gamma_i= d_i$.
\end{remark}

\begin{algorithm}[t]
	\caption{Adapted Graph Forward-Backward Splitting (aGFB).}\label{alg:distributed_algorithm}
	\textbf{Pick:} $\cG=(\cN, \cE)$ a connected directed graph satisfying \eqref{eq:graph_property}, and $\theta \in (0, 1)$\\
	\textbf{Set:} $\hat \gamma_i:= d_i + \frac{1}{2}(\sum_{(h, i)\in \cE} \beta_{ih} + \sum_{(i, j)\in \cE} \beta_{ji})$ for $i \in \llbracket1, n\rrbracket$
	
	\textbf{Input:} {$\bz^0=0\in \cH^{n}$}
	
	\For {$k = 0, 1, 2, \ldots$}{
		\For {$i = 1, 2, \dots, n$}{
			$\displaystyle x_i^{k+1} = J_{\frac{2}{\hat \gamma_i}A_i}\bigg(\frac{2}{\hat \gamma_i}\sum_{(h, i)\in \cE} (1+\beta_{ih}) x_h^{k+1} - \frac{2}{\hat \gamma_i} \sum_{(h, i) \in \cE} C_{i,h} \left(x_h^{k+1}\right)+\frac{2}{\hat \gamma_i} w_i^k \bigg)$
		}
		\For {$i = 1, 2, \dots, n$}{
			$\displaystyle w_i^{k+1} = w_i^k + d_i\theta \bigg(\frac{1}{d_i}\sum_{i \sim j}x_j^{k+1} - x_i^{k+1}\bigg)$ \hfill \textcolor{gray}{$\rhd$ the symbol $\sim$ means adjacent}
		}
	}
\end{algorithm}

\section{Numerical Experiments}\label{sec:numerics}

In this section, we present our numerical experiments, which are performed in Python on a 12thGen.~Intel(R) Core(TM) i7–1255U, 1.70–4.70 GHz laptop with 16 Gb of RAM and are available for reproducibility at \href{https://github.com/echnen/split-forward-backward}{https://github.com/echnen/split-forward-backward}.

\subsection{Investigating algorithm's parameters}\label{sec:num_investigating_algorithm_parameters}
In this section, we present a series of numerical experiments to investigate the influence of the matrices $M$, $P$, and $H$, $K$ on the performance of Algorithm~\ref{alg:simple_splitting_FPE_introduction}, and we provide suggestion for their design to achieve good performance.

\subsubsection{Data preparation}\label{sec:num_data_preparation}

For $p, d, n\in \N$, $0\leq\delta_1\leq \delta_2$, a matrix $\Psi \in \R^{p, d}$, a vector $y \in \R^p$, a sample of points $\{\xi_i\}_{i=1}^n\subset \R^d$, we consider the following convex optimization problem:
\begin{equation}\label{eq:num_objective_function}
    \min_{x \in \R^d} \ \|x - \xi_1\|+\dots + \|x-\xi_n\| + H_{\delta_1, \delta_2}(\Psi x - y),
\end{equation}
where $H_{\delta_1, \delta_2}: \R^p\to \R$ is a Huber-like smooth function defined for all $z:=(z_1,\dots, z_p) \in \R^p$ by:
\begin{equation*}
    H_{\delta_1, \delta_2}(z):=\sum_{i=1}^p h_{\delta_1, \delta_2}(z_i), \quad \quad h_{\delta_1, \delta_2}(z_i):=\begin{cases}
        0 & \text{if} \ |z_i|\leq \delta_1,\\
        \frac{1}{2}(z_i- \delta_1)^2 & \text{if} \ |z_i| \in [\delta_1, \delta_2],\\
        (\delta_2- \delta_1)|z_i|-\frac{1}{2}(\delta_2^2 - \delta_1^2) & \text{else}.
    \end{cases}
\end{equation*}
The function $h_{\delta_1, \delta_2}$ is differentiable with $1$-Lipschitz continuous gradient and thus $H_{\delta_1, \delta_2}$ has $\|\Psi\|_2$-Lipschitz continuous gradient. Instead of computing or estimating the norm of $\Psi$, we can split the forward term into $m\leq p$ terms. Doing so, since $h_{\delta_1, \delta_2}$ is $1$-Lipschitz, we can take, for all $i \in \llbracket1, m\rrbracket$, $\beta_i := \|\Psi_{I_i}\Psi_{I_i}^T\|_2$ where $\Psi_{I_i}\in \R^{|I_i|\times d}$ denotes the $i$th block formed with the rows of $\Psi$ indexed by $I_i\subset \llbracket1, p\rrbracket$. Note that if $I_i=\{i\}$ and $m=p$, then $C_i(x) := h_{\delta_1, \delta_2}'(\Psi_{i} x - y_{i})\Psi_i^T$ where $\Psi_i$ is the $i^{th}$ row of $\Psi$ and $\beta_i = \|\Psi_i\|^2$, which can be computed easily. Regarding the nonsmooth terms, we consider $A_i := \partial g_i$ with $g_i(x):=\|x - \xi_i\|$ for all $i\in \llbracket1, n\rrbracket$. Observe that for all $\tau>0$, $J_{\tau A_i}$ coincides with $\prox_{\tau g_i}$, which admits a simple closed-form expression via a standard soft-thresholding.

\subsubsection{Algorithm design}\label{sec:num_randomized_algorithm_design}

To capture the generality of Algorithm~\ref{alg:simple_splitting_FPE_introduction}, with the aim of systematically testing the influence of the parametrization matrices, we follow construction principles that allow us to randomly generate instances of $M, H, K,$ and $P$ guaranteeing that all the hypotheses are met. This setup enables extensive numerical experiments and leads us to three heuristics that consistently deliver strong performances. In the following, it will be convenient to denote by $\mathcal{U}(I)$, the uniform distribution on the interval $I\subset \R$.\medskip

\paragraph{\textit{Defining $M$ and $\mathcal{L}:=MM^T$}} We can easily define a matrix $M$ satisfying the properties in Algorithm~\ref{alg:simple_splitting_FPE_introduction} by setting
\begin{equation}\label{eq:define_M}
    M:=\left(I_n - \tfrac{1}{n}\one\one^T\right) \hat{M}, \quad \text{for} \ \hat{M} \ \text{sampled from} \ \mathcal{U}(I_M)^{n\times (n-1)}, \ I_M \subset \R.
\end{equation}
This yields a full rank matrix with probability $1$, and allows us to compare different randomized choices of $M$, cf.~Section \ref{sec:num_testing_M}. In practice, we opt to implement the equivalent formulation of Algorithm \ref{alg:simple_splitting_FPE_introduction} described in Section~\ref{sec:lifted_algorithm_formulation}, at the cost of sacrificing minimality. This allows us to pick $\mathcal{L}$ as in \eqref{eq:define_lap}. This matrix is the one that maximizes the magnitude of the smallest nonzero eigenvalue among all matrices with fixed spectral norm of the form $MM^T$ with $M$ as in \eqref{eq:define_M}. This choice, already suggested, e.g., in \cite{bcn24}, yields our first heuristic:

\begin{heuristic}\label{heuristic:M}
	In Algorithm~\ref{alg:simple_splitting_FPE_introduction}, pick $M\in \R^{n\times (n-1)}$ such that $\cL=MM^T=nI_n - \one \one^T$. To avoid finding the corresponding $M$, implement the equivalent version \eqref{eq:alg_MM_lifted} when possible.
\end{heuristic}

\medskip

\paragraph{\textit{Defining $P$}} 
To define a matrix $P$ with the properties in Algorithm~\ref{alg:simple_splitting_FPE_introduction}, we can sample $\hat{M}\in\R^{n \times m_1}$ for any $m_1\leq n-1$, and set $P$ as in \eqref{eq:define_M}. Note that, this time, any such an instance is feasible, even $P=0$, no matter the rank condition. Since a smaller $S$ gives larger resolvent step sizes, we suggest to select $P$ such that $PP^T$ is as small as possible, which yields our second heuristic:

\begin{heuristic}\label{heuristic:P}
	In Algorithm~\ref{alg:simple_splitting_FPE_introduction}, pick $P=0$.
\end{heuristic}

\medskip

\paragraph{\textit{Defining causal $H$ and $K$}}\label{sec:defining_causal_HK} To define $H, K^T \in \R^{n\times m}$, we first sample $\hat H$ and $\hat K^T$ from $\mathcal{U}(I_H)^{n\times m}$ and $\mathcal{U}(I_K)^{n\times m}$ on the intervals $I_H$ and $I_K$, respectively. We generate randomly an $(m, n)$-nondecreasing vector $F:=(F_1,\dots, F_m)$ (cf.~Definition~\ref{def:nondecreasing-index-set}) and, for all $i \in \llbracket1, n\rrbracket$, we set
\begin{equation}
    \hat H_{i j} = 0, \ \hat K_{hi}=0, \quad \text{for all} \ h \leq F_i \leq j.
\end{equation}
Eventually, we normalize the sum to one as: $H_{ij}:= \hat{H}_{ij} / \sum_{h=1}^n \hat H_{hj}$ and $K_{ji}:= \hat{K}_{ji} / \sum_{h=1}^n \hat K_{ih}$ for all $i\in \llbracket1, n\rrbracket$ and $j \in \llbracket1, m\rrbracket$. With probability $1$, this procedure yields a couple of causal matrices $H$ and $K$ such that $H^T\one=K\one=\one$. For further details, we refer the reader to Example~\ref{example:causal_matrices}.

The $H$ and $K$ matrices appear in the convergence condition in Item~\ref{thm:splitting_nonexp_iff_4} of Theorem~\ref{thm:frugal_splitting_iff}. Since in the matrix $S$ there is no distinction between $PP^T$ and $W$, to remain consistent with Heuristic~\ref{heuristic:P}, we propose the following:
\begin{heuristic}\label{heuristic:H_and_K}
	In Algorithm~\ref{alg:simple_splitting_FPE_introduction}, pick $H$ and $K$ that are causal in the sense of Definition~\ref{def:causal_pair} and minimize $\|W\|_2$, i.e., that solve the following convex optimization problem:
	\begin{equation}
		\label{eq:Optimization_KH}
		\begin{aligned}     
			&\underset{K,H^T\in \real^{n\times m}}{\text{minimize}} && \left\Vert\sqrt{\diag(\beta)}(K-H^T)\right\Vert_2,\\
			&\text{subject to } &&  K\one = H^T\one = \one,\\
			&&& K\in \mathcal{S}_{n,m}(F), \  H^T\in \mathcal{S}^c_{n,m}(F),
		\end{aligned}
	\end{equation}
where the sets $\mathcal{S}_{n,m}(F)$ and $\mathcal{S}^c_{n,m}(F)$ are defined in Definition~\ref{def:staircase_structure} and $F$ is a fixed $(m, n)$-nondecreasing vector according to Definition~\ref{def:nondecreasing-index-set}.
\end{heuristic}

\subsubsection{The influence of $M$}\label{sec:num_testing_M} In the following experiment, we test the influence of $M$. Specifically, we consider a problem instance as detailed in Section~\ref{sec:num_data_preparation}, with $n=20$. We set $P=0$, generate $H, K$ randomly as detailed in Section~\ref{sec:num_randomized_algorithm_design}, and run Algorithm~\ref{alg:simple_splitting_FPE_introduction} $200$ times for $100$ iterations with $\mathcal{L}$ either: generated as $\mathcal{L}=MM^T$ with $M$ as specified in \eqref{eq:define_M} (only $20$ times); chosen as specified in \eqref{eq:define_lap}; or as the graph laplacian of a connected Watts--Strogatz small-world graph with a random degree picked from $\llbracket 1, n\rrbracket$. Additionally, we normalize all the norms to be $\|\mathcal{L}\|_2 = 1$.

We record the last objective function residual and variance and the corresponding \emph{algebraic connectivity}, i.e., the smallest nonzero eigenvalue of $\mathcal{L}$ (which for graph laplacians is a measure of connectivity of the graph and is usually denoted by $\lambda_1$), and show the results in Figure~\ref{fig:experiment_lap}. Figure~\ref{fig:experiment_lap} confirms that also in the forward-backward case of the present paper, the choice \eqref{eq:define_lap} is the one to be preferred if no distributed implementations are needed, confirming Heuristic~\ref{heuristic:M}. It also suggests that graph laplacians always yield better performance than randomly sampled matrices as in \eqref{eq:define_M}.

\subsubsection{The influence of $P$}\label{sec:num_testing_P} In this experiment, we test the influence of the matrix $P$ complementing a first insight in \cite[Section 5.1]{bcn24}. Specifically, we consider the same setting as in Section~\ref{sec:num_testing_M} but this time we fix a (randomly generated) laplacian matrix $\mathcal{L}$, and each time i) we generate a matrix $P$ randomly as described in Section \ref{sec:num_randomized_algorithm_design}, we compute $PP^T$, and ii) we rescale $PP^T$ in such a way that $\|PP^T\|_2$ matches a random number sampled from $[0, 1]$. We record the last objective function residual and the value of $\|PP^T\|_2$ and show the results in Figure~\ref{fig:experiment_P}.

\Cref{fig:experiment_P} confirms that setting \( P \neq 0 \) negatively impacts the convergence of Algorithm~\ref{alg:simple_splitting_FPE_introduction}, and suggests picking \( P = 0 \), as suggested in Heuristics~\ref{heuristic:P}. The same trend will persist in the comparison with existing algorithms in Section~\ref{sec:num_comparison}, where some are formulated with \( P \neq 0 \).

\begin{figure}[t]
    \centering
    \begin{subfigure}{0.4\linewidth}
        \includegraphics[width=\linewidth]{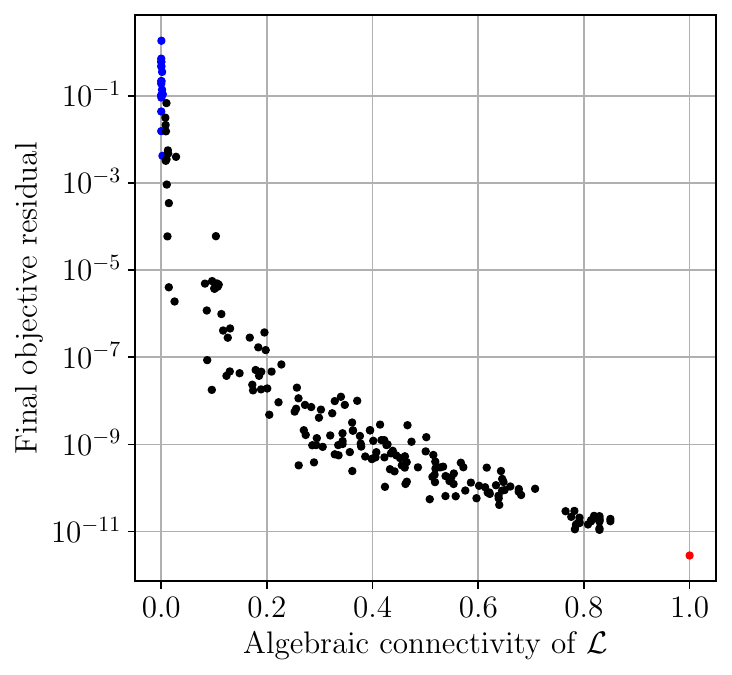}
        \caption{Testing the influence of the spectrum of $\mathcal{L}$: Final residual against $\lambda_1$.}
    \label{fig:experiment_lap}
    \end{subfigure}\hfill
    \begin{subfigure}{0.4\linewidth}
        \includegraphics[width=\linewidth]{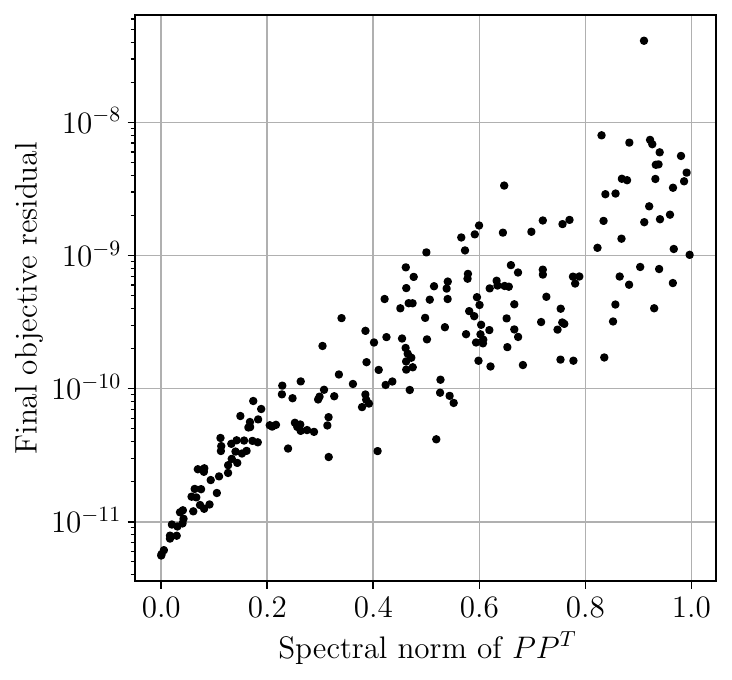}
        \caption{Testing the influence of $P$: Final residual against $\|PP^T\|_2$.}
    \label{fig:experiment_P}
    \end{subfigure}
    \caption{How to choose the matrices $\mathcal{L}$ and $P$: Results of the experiments in Section~\ref{sec:num_testing_M} and Section~\ref{sec:num_testing_P}. In Figure \ref{fig:experiment_lap}, the blue dots correspond to $\mathcal{L}$ generated as $\mathcal{L}=MM^T$ with $M$ as in \eqref{eq:define_M}, the red dot to the case with $\mathcal{L}$ as in \eqref{eq:define_lap} and the black ones to those cases in which $\mathcal{L}$ is a graph laplacian. }
\end{figure}

\subsubsection{The influence of $H$ and $K$}\label{sec:num_studying_H_and_K} In the following set of experiments, we study the choices of $H$ and $K$, which prescribe how the forward terms are handled.

We first show that having individual estimates of the cocoercivity constants of each forward term $C_i$ can significantly enhance the performance of the method. To do so, we consider two specific instances of problem \eqref{eq:num_objective_function} that differ only in the choice of $\Psi\in \R^{p \times d}$. First, we sample it uniformly, then, we consider the same matrix but scaling $2$ random rows of $\Psi$ of a factor $5$ to simulate heterogeneity in the data fidelity term. In both cases, we fix the starting point $z = 0$ and run randomized variants of Algorithm~\ref{alg:simple_splitting_FPE_introduction} for $20$ runs, once with possibly different $\beta_i$s, and then with $(\beta_{\max}, \dots, \beta_{\max})$ where $\beta_{\max} := \max\{\beta_1,\dots, \beta_m\}$.

In each case, we measure the objective value residual along the iterations and show the results in Figure~\ref{fig:experiment_het}. It is evident that accounting for heterogeneity of data significantly enhances the performance of the method. The same phenomenon will be reflected in Section~\ref{sec:num_comparison}, where we test Algorithm~\ref{alg:simple_splitting_FPE_introduction} against other instances in the literature, which have only been introduced with $\beta$ constant.

To test the efficiency of Heuristic~\ref{heuristic:H_and_K}, we now study the performance of Algorithm~\ref{alg:simple_splitting_FPE_introduction} in relation to the spectral norm of $W$. To do so, we consider an instance of \eqref{eq:num_objective_function} with $n=15$ and solve it with Algorithm~\ref{alg:simple_splitting_FPE_introduction} splitting randomly the smooth term in \eqref{eq:num_objective_function} into $m=1,\dots, p$ separate terms. In each case, we sample $H, K$ randomly $10$ times and run Algorithm \ref{alg:simple_splitting_FPE_introduction}. In each run, we measure objective function residual along the iterations, record $\|W\|_2$, and show the results in Figure~\ref{fig:experiment_m_obj}. Each line depicts one specific case and its color denotes the magnitude of $\|W\|_2$. We also show in Figure~\ref{fig:experiment_m_sca} the scatter plot of $m$ and the corresponding $\|W\|_2$.

While the scatter plot confirms the effectiveness of Heuristic~\ref{heuristic:H_and_K}, as the best performing case is attained picking $H$ and $K$ such that the norm of $W$ is minimized (the red dot in Figure \ref{fig:experiment_m_sca} corresponds to the red line in Figure \ref{fig:experiment_m_obj}), the best strategy to split the regular term remains unclear. In particular, Figure~\ref{fig:experiment_m_sca} suggests picking $m\simeq n$. 

\begin{figure}[t]
    \centering
    \begin{subfigure}{0.32\linewidth}
        \includegraphics[width=\linewidth]{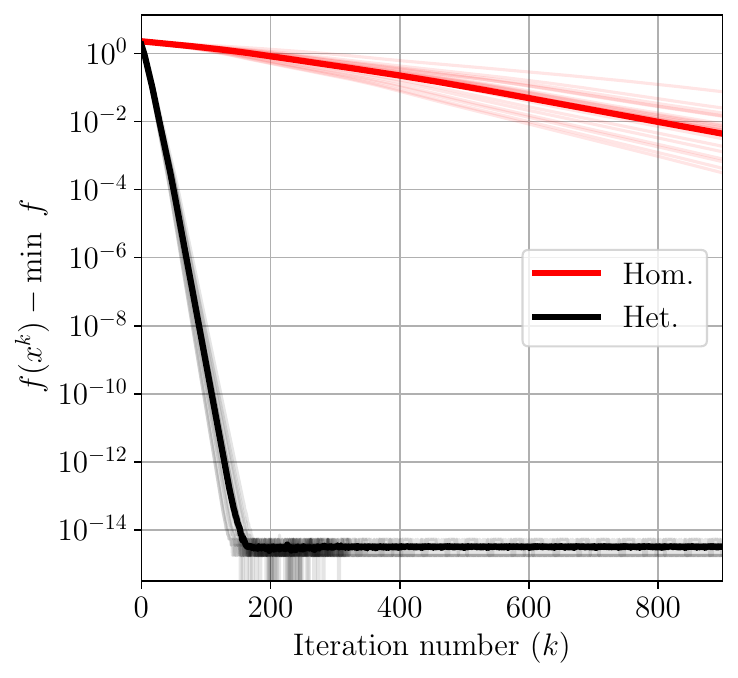}
        \caption{Accounting for heterogeneity of data enhances performances.}
        \label{fig:experiment_het}
    \end{subfigure} \hspace{0.005\linewidth}
    \begin{subfigure}{.32\linewidth}
		\includegraphics[width=\linewidth]{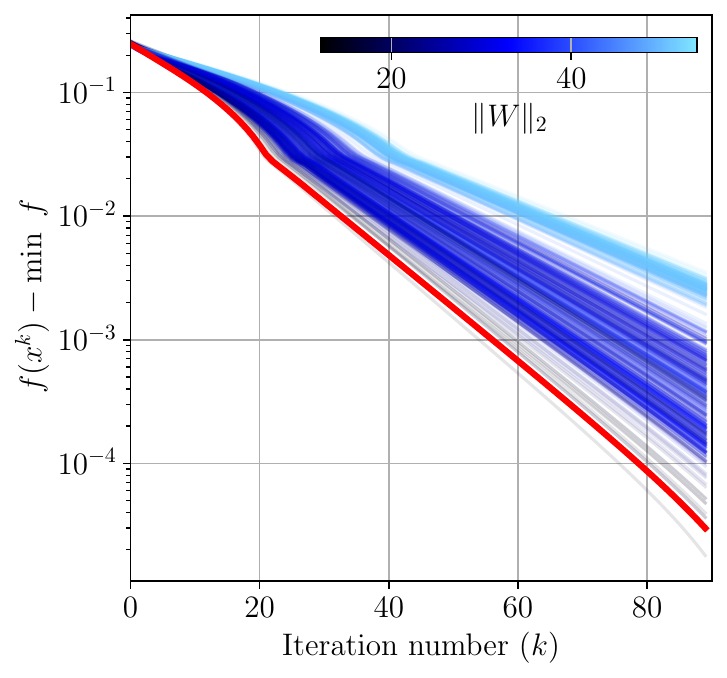}
		\caption{Objective value decrease highlighting $\|W\|_2$.}
		\label{fig:experiment_m_obj}
	\end{subfigure} \hspace{0.005\linewidth}
    \begin{subfigure}{.32\linewidth}
		\includegraphics[width=\linewidth]{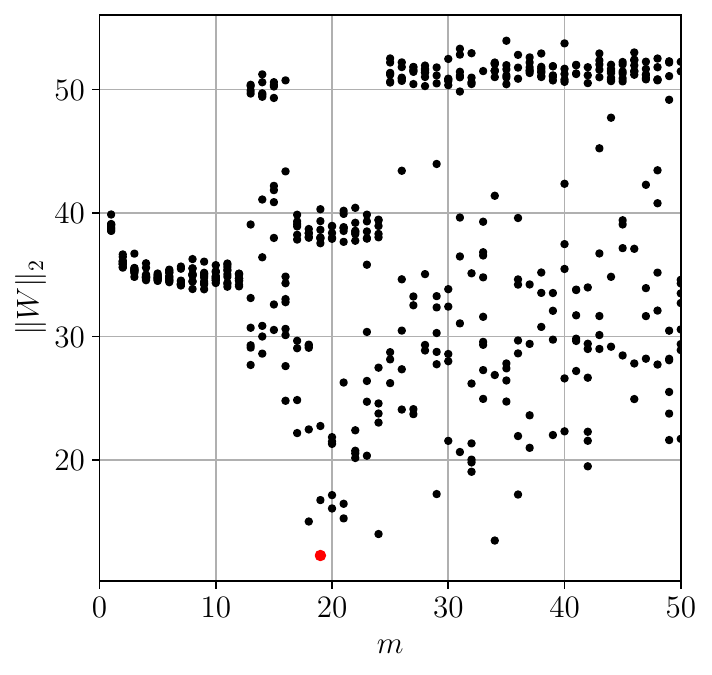}
		\caption{The values of $m$ against the corresponding values of $\|W\|_2$.}
		\label{fig:experiment_m_sca}
	\end{subfigure}
    \caption{How to choose $H$ and $K$: The benefit of accounting for heterogeneity of data (Figure \ref{fig:experiment_het}) and the influence of the spectral norm of $W$ on the performance of Algorithm~\ref{alg:simple_splitting_FPE_introduction} with different choices of $H$, $K$ (Figures \ref{fig:experiment_m_obj} and \ref{fig:experiment_m_sca}). Specifically, the red point in Figure \ref{fig:experiment_m_sca} corresponds to the red line in Figure \ref{fig:experiment_m_obj}. See Section~\ref{sec:num_studying_H_and_K} for further details.}
    \label{fig:experiment_H_K_initial}
\end{figure}

\subsection{Comparison with existing schemes}\label{sec:num_comparison}

In our second set of experiments, we consider the specific instance of Algorithm~\ref{alg:simple_splitting_FPE_introduction} introduced in Algorithm~\ref{alg:distributed_algorithm}, and its optimized variant enhanced with Heuristics~\ref{heuristic:M}, \ref{heuristic:P}, and \ref{heuristic:H_and_K}, and test it against other special cases of Algorithm \ref{alg:simple_splitting_FPE_introduction} in the literature. Specifically, we instantiate the following methods:
\begin{itemize}
    \item (aGFB) The adapted graph forward-backward method outlined in Algorithm~\ref{alg:distributed_algorithm} with $\mathcal{L}$ as in \eqref{eq:define_lap}, $P=0$, and $H, K$ described in Section~\ref{sec:special_cases_adapted_forward_backward}.
    \item (SFB+) Algorithm~\ref{alg:simple_splitting_FPE_introduction} enhanced with Heuristics~\ref{heuristic:M}, \ref{heuristic:P} and \ref{heuristic:H_and_K}, which we refer to as \emph{Split-Forward-Backward+}.
    \item (GFB) The forward-backward splitting devised by graphs introduced recently in \cite{acl24}. It can be obtained by picking $\mathcal{L}$ as in \eqref{eq:define_lap}, and $H$ and $K$ as described in Section~\ref{sec:special_cases_graph_forward_backward}.
    \item (RFB) The forward-backward splitting on ring networks introduced by Aragón-Artacho, Malitsky, Tam and Torregrosa-Belén in \cite{amtt23}.
    \item (SDY) The Sequential Davis--Yin  method by Bredies, Chenchene, Lorenz, Naldi in \cite{bcln22}.
\end{itemize}

\subsubsection{Comparison on the toy example}\label{sec:num_comparison_toy} In our first experiment, we compare the algorithms above to solve problem \eqref{eq:num_objective_function} with and without data hetereogenity. We perform a systematic comparison that randomizes the design choice of each considered algorithm ensuring fairness and unbiased-ness. In each case, we measure the objective value residual along the iterations, and show the results in Figure~\ref{fig:experiment_1}.

We can clearly see that aGFB and SFB+ perform particularly well, outperforming all the considered methods. More precisely, for homogeneous data, the performance of aGFB and SFB+ is matched only by that of SDY. In light of Section~\ref{sec:num_investigating_algorithm_parameters}, we believe the reason for this significant mismatch with GFB and CFB is that in those cases $P\neq 0$, while for aGFB, SFB+ and SDY we have $P=0$. On the other hand, for heterogeneous data, SFB+ and aGFB clearly outperform all other methods.
    
\begin{figure}[t]
    \centering
    \begin{subfigure}[b]{.32\textwidth}
    \includegraphics[width=\linewidth]{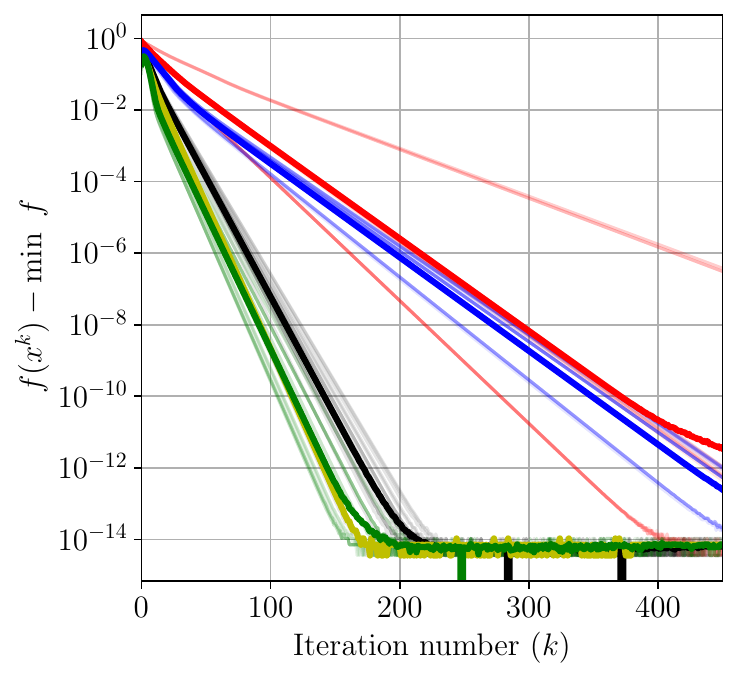}
    \centering
    \caption{Homogeneous data.}
    \label{fig:experiment_1_1}
    \end{subfigure} \hspace{0.1\linewidth}
    \begin{subfigure}[b]{.32\textwidth}
    \includegraphics[width=\linewidth]{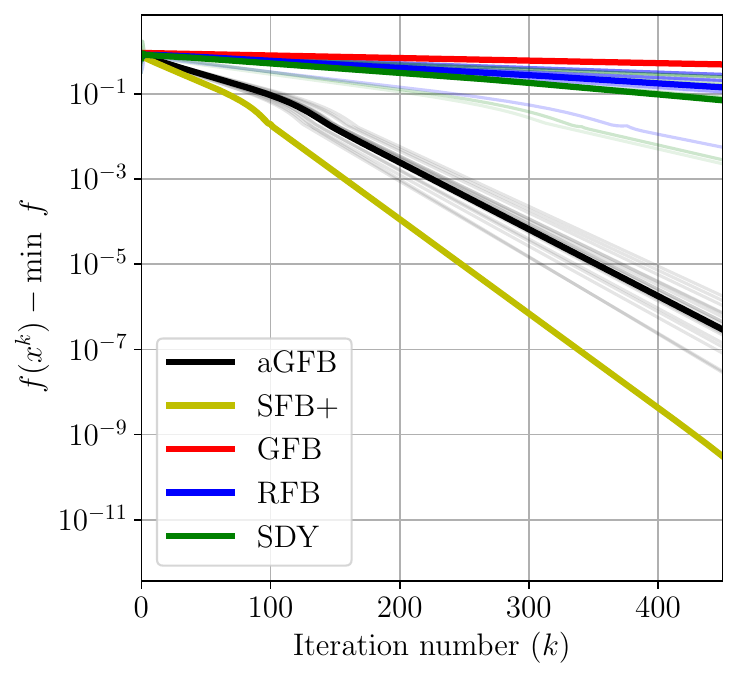}
    \caption{Heterogeneous data.}
    \label{fig:experiment_1_2}
    \end{subfigure}
    \caption{Comparison between proposed methods (aGFB, SFB+) and existing schemes to solve the toy problem in Section~\ref{sec:num_data_preparation}. See Section \ref{sec:num_comparison_toy} for details.}
    \label{fig:experiment_1}

\end{figure}

\subsubsection{Application to portfolio optimization with decarbonization}\label{sec:num_comparison_portfolio_opt} We now consider a convex portfolio optimization problem using real market data. We select \( d := 6 \) assets over the first \( p := 123 \) trading days of 2020\footnote{The data is freely available from \href{https://www.stockdata.org/login}{stockdata.org}.}. The corresponding asset returns are stored in a matrix \( r \in \mathbb{R}^{p \times d} \). Each asset $i\in \llbracket1, d\rrbracket$ has a carbon footprint encoded in three indexes $C_i^1$, $C_i^2$, $C_i^3 \in \R_+$, quantifying direct, indirect and other greenhouse gas emissions, which we take from \cite{lr22}.

The standard Markowitz portfolio optimization problem consists in finding a portfolio, i.e., $x \in \Delta$, where $\Delta \subset \R^d$ is the standard unit simplex in $\R^d$, that maximizes the expected return $\hat r^T x$ while penalizing the expected risk $x^T \hat \Sigma x$, where $\hat r$ is the vector of mean returns and $\hat \Sigma$ is the covariance matrix of asset returns. In our experiment, we consider decarbonization constraints according to the 2015 Paris agreement on climate change. These require an average of $7\%$ carbon intensity decrease each year, which, following \cite{lr22}, can be imposed as $\sum_{i=1}^d C^i x_i \leq 0.93 \sum_{i=1}^d C^i x^0_i$ with $C^i:=C^i_1 + C^i_2 + C^i_3$. To further showcase the flexibility of our algorithmic approach, we impose that the carbon intensity decreases within each scope $j$ of a percentage $\zeta_j$ by imposing the corresponding constraint $x \in \mathcal{C}_j$ for $j \in\llbracket1,3\rrbracket$. In our model problem, we further account for the so-called one-way turnover of the portfolio, i.e., an $\ell_1$ penalization with the current position $x^0$, which is standard in portfolio optimization, see, e.g., \cite{Rbbdkkns17}. Putting all together, our problem reads as:
\begin{equation}\label{eq:port_opt_objective}
\begin{aligned}
    & \underset{x \in \R^d}{\text{minimize}} && x^T \hat \Sigma x - \hat r^T x +  \|x - x^0\|_1, \\
    &\text{subject to} &&x \in \mathcal{C}_1 \cap \mathcal{C}_2 \cap \mathcal{C}_3, \ x \in \Delta.
\end{aligned}
\end{equation}

To tackle \eqref{eq:port_opt_objective}, we introduce the following non-smooth functions $g_1(x):=\|x-x^0\|_1$, $g_2(x) := \iota_\Delta(x)$, $g_3(x) = \iota_{\cC_1}(x)$, $g_4(x):= \iota_{\cC_2}(x)$ and $g_5(x):=\iota_{\cC_3}(x)$ for all $x \in \R^d$. Then, we split the smooth function $x\mapsto x^T \hat \Sigma x - \hat r^T x$ into four terms $f_i(x):=x^T \hat \Sigma_i x - \frac{1}{4}\hat r^T x$, where $\hat \Sigma_i$ is obtained by splitting $r$ evenly along the first axis into four chunks $r_i$ and setting $\hat \Sigma_i$ as being the corresponding covariance matrix. The rationale for this split is illustrated in Figure~\ref{fig:dataset}, which displays two assets' returns over the considered time frame. We can observe that the second chunk of data, corresponding to the Covid-19 crisis, exhibits markedly different behavior compared to the other periods. In light of the considerations above, isolating this segment may prove beneficial. Finally, note that the number of chunks is not restricted to four.

In summary, \eqref{eq:port_opt_objective} can be written as the sum of $n=5$ non-smooth functions with simple proximity operators, and $m=3$ smooth ones. Then, for $20$ times, we define $H$ and $K$ so as to pick one random instance for each of the considered algorithms and run Algorithm~\ref{alg:simple_splitting_FPE_introduction}. Recall that only the algorithms introduced in this paper can take different $\beta$s. For the others, we choose $\beta_{\max}:=\max\{\beta_1,\dots, \beta_m\}$. For each of these algorithm, we show in Figure~\ref{fig:experiment_portfolio} the (mean) distance to the solution (pre-computed) of the solution estimate $\{x_2^{k+1}\}_k$, which is always a feasible portfolio since it is obtained as a projection onto the simplex. In particular, we can see that our methods significantly outperform pre-existing variants, with our optimized variant SFB+ always exhibiting the best performance.

\begin{figure}[t]
	\centering
	\begin{subfigure}{.32\textwidth}
		\centering
		\includegraphics[width=\linewidth]{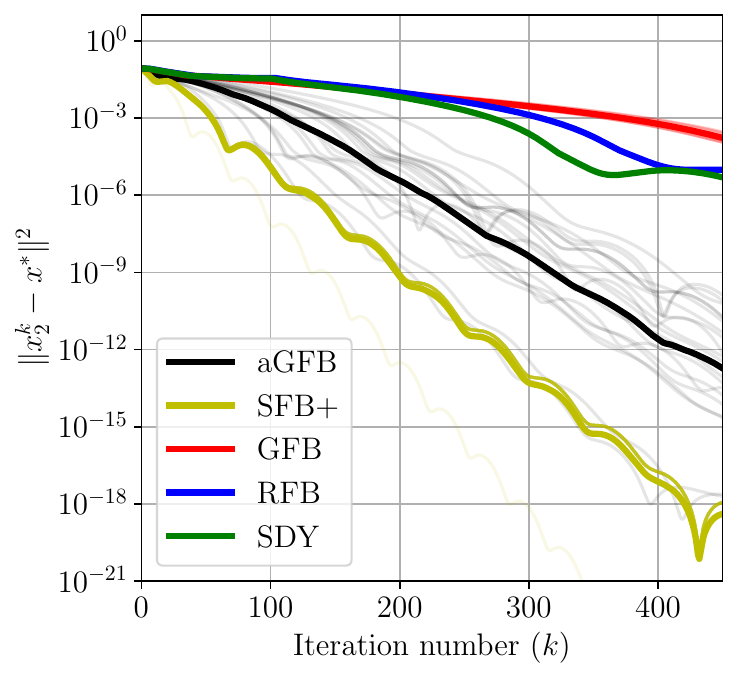}
		\caption{Distance to solution decrease.}
		\label{fig:experiment_portfolio}
	\end{subfigure}\hspace{0.1\linewidth}
	\begin{subfigure}{.35\textwidth}
		\includegraphics[width=\linewidth]{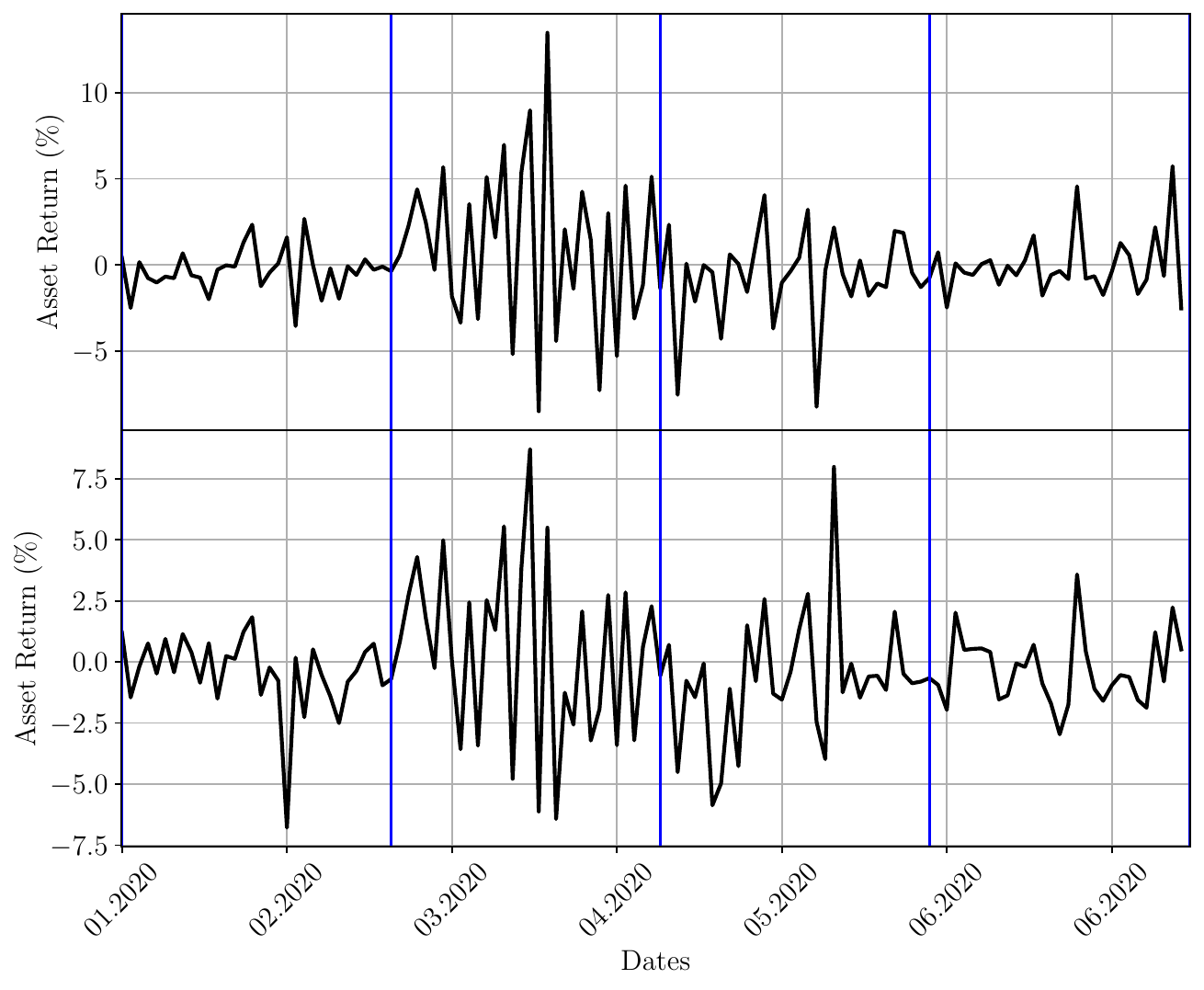}
		\caption{Two asset returns.}
		\label{fig:dataset}
	\end{subfigure}\hfill 
	\caption{Portfolio Optimization: Comparison with existing schemes and data insights. In Figure \ref{fig:dataset}, the blue vertical lines divide the data into four chunks, each defining one forward term. See Section~\ref{sec:num_comparison_portfolio_opt} for further details.}
\end{figure}

\section{Conclusions}

In this paper, we provide the first, albeit partial, result toward resolving an open problem posed by Ryu in \cite{Ryu}. We characterize all averaged frugal resolvent splittings with minimal lifting and demonstrate compelling practical applications. Whether Algorithm~\ref{alg:simple_splitting_FPE_introduction} also captures all \emph{unconditionally stable} methods remains a challenging open problem, for which we currently have no indication. Nevertheless, our results up to Remark~\ref{remark:unconditional_stability} may prove instrumental in addressing it. Our work opens further avenues for future research: We aim to adapt our method to accommodate monotone Lipschitz (but not necessarily cocoercive) operators $C_1, \dots, C_m$, and provide a full characterization of averaged splitting methods involving compositions with linear operators \emph{à la} Chambolle--Pock.

\bigskip

\noindent \textbf{Acknowledgments.}  E.N. is supported by the MUR Excellence Department Project awarded to the department of mathematics at the University of Genoa, CUP D33C23001110001, and by the US Air Force Office of Scientific Research (FA8655-22-1-7034). E.N. is a member of GNAMPA of the Istituto Nazionale di Alta Matematica (INdAM). A.\AA{} and P.G. are supported the Wallenberg AI, Autonomous Systems and Software Program (WASP) funded by the Knut and Alice Wallenberg
Foundation, and are members in the ELLIIT Strategic Research Area. P.G. acknowledge support from Vetenskapsr\aa{}det grant VR~2021-05710.

\bibliographystyle{amsplain}
\bibliography{references}

\begin{appendix}

\section{Causality}\label{sec:app_causality}

\begin{definition}[$(m,n)$-nondecreasing vector]
	\label{def:nondecreasing-index-set}
	Let $n,m\in\N$. We say that
	\[
	F=(F_1,F_2,\ldots,F_n)\in\{0,1,\ldots,m\}^n
	\] is an \emph{$(m,n)$-nondecreasing vector} if \(F_1=0\), \(F_n = m\) and $F_i\leq F_{i+1}$ for each $i\in \llbracket 1,n-1 \rrbracket$.
\end{definition}

\begin{definition}[Staircase structure]\label{def:staircase_structure}
	Let $n,m\in\N$ and $F$
	be an $(m,n)$-nondecreasing vector.
	Let $\mathcal{S}(F)$ denote the set of matrices $M\in\real^{n\times m}$ that have a \emph{staircase structure} w.r.t.~$F$, i.e., 
	\[
	M_{ij}=0 \quad \text{for all } i=1,\ldots,n \text{ and } j > F_i.
	\]
\end{definition}
Conversely, $\mathcal{S}^{c}(F)$ denotes the set of matrices $M\in\real^{n\times m}$ that have a complement staircase structure w.r.t.~$F$, i.e., $M_{ij}=0$ for all $i=1,\ldots,n$ and $j \leq F_i$. The staircase and complement staircase definitions are used to define what we call a causal pair of matrices.
\begin{definition}[Causal pair of matrices]\label{def:causal_pair}
	A pair of matrices $H, K^T \in \R^{n \times m}$ is said to be \emph{causal} if there exists an $(m, n)$-nondecreasing vector $F$ such that 
	\begin{equation}
		H \in  \mathcal{S}(F)\, \quad \text{and} \quad K^T\in \mathcal{S}^c(F).
	\end{equation}
\end{definition}

\begin{remark}
	The concept of causal pairs of matrices captures the full class of matrices $H$ and $K^T \in \R^{n\times m}$ such that, given $\bC:=\diag(C_1,\dots, C_m)$ with $C_i:\cH\to \cH$, the term $H \bC (K \bx^{k+1})$ in Algorithm~\ref{alg:simple_splitting_FPE_introduction} can be computed only using a single evaluation of each operator $C_i$ and simple algebraic operations.
\end{remark}

\begin{example}\label{example:causal_matrices}
	Let $F$ be a $(m,n)$-nondecreasing vector, $H\in\mathcal{S}(F)$, and $K^T\in\mathcal{S}^c(F)$. Then, $H\bC K$ is strictly lower triangular for all choices of $C_j$, which is consistent with the ordering defined by $F$. As an example, take \(m=5\), \(n=4\), and let $F=(0,2,2,5)$ that satisfies \cref{def:nondecreasing-index-set}.
	The staircase and complement staircase structures of $H$ and $K^T$, as well as the lower-triangular structure of $H\bC K$ respectively become 
	\begin{align*}
		H\,\colon\begin{pmatrix}
			0      & 0      & 0      & 0      & 0\\
			\star & \star & 0      & 0      & 0\\
			\star & \star & 0      & 0      & 0\\
			\star & \star & \star & \star & \star
		\end{pmatrix}, \quad 
		K^T\,\colon\begin{pmatrix}
			\star & \star & \star & \star & \star\\
			0      & 0      & \star & \star & \star\\
			0      & 0      & \star & \star & \star\\
			0      & 0      & 0      & 0 & 0
		\end{pmatrix},\quad \text{and} \quad
		H\bC K\,\colon\begin{pmatrix}
			0      & 0      & 0      & 0    \\
			\star & 0 & 0      & 0    \\
			\star & 0 & 0      & 0    \\
			\star & \star & \star & 0 
		\end{pmatrix}.
	\end{align*}
\end{example}

\begin{proposition}
	\label{prop:Causal_F}
	Suppose that the frugal splitting operator with minimal lifting $T$ defines an operator ordering $\hookrightarrow$. Then the ordering $\hookrightarrow$ is uniquely defined by the $(m,n)$-nondecreasing vector $F$
	given for $i\in \llbracket 1,n\rrbracket$ by
	\begin{equation}
		\label{eq: Ordering_F} 
		\begin{aligned}
			F_i := \begin{cases}
				\max \{ j\in\llbracket 1,m\rrbracket \ : \  C_j \hookrightarrow A_i\} &\text{if $C_1 \hookrightarrow A_i$},\\
				0   &\text{otherwise.}
			\end{cases}
		\end{aligned}
	\end{equation}
\end{proposition}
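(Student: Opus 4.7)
The plan is to verify three things: that $F$ as defined in \eqref{eq: Ordering_F} is indeed an $(m,n)$-nondecreasing vector in the sense of Definition~\ref{def:nondecreasing-index-set}, and that it encodes the entire ordering $\hookrightarrow$ without loss. Throughout, I will rely on the structural facts already stated: up to relabeling we have the two chains $A_1\hookrightarrow A_2\hookrightarrow\cdots\hookrightarrow A_n$ and $C_1\hookrightarrow C_2\hookrightarrow\cdots\hookrightarrow C_m$, together with the endpoint constraints $A_1\hookrightarrow C_j$ and $C_j\hookrightarrow A_n$ for every $j\in\llbracket 1,m\rrbracket$, which are forced by the backward-first/backward-last structure of \eqref{eq:abstract_fs_block}.

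First I would check well-posedness of \eqref{eq: Ordering_F}: the set $\{j\in\llbracket 1,m\rrbracket : C_j\hookrightarrow A_i\}$ is a finite subset of $\llbracket 1,m\rrbracket$, hence its maximum exists whenever it is non-empty, and by convention $F_i=0$ otherwise. The boundary conditions then follow immediately from the endpoint constraints: $A_1\hookrightarrow C_j$ for all $j$ precludes $C_j\hookrightarrow A_1$ (the partial order cannot have 2-cycles), so $F_1=0$; and $C_j\hookrightarrow A_n$ for every $j$ gives $F_n=m$.

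For monotonicity $F_i\le F_{i+1}$, I would argue by transitivity. If $F_i=0$, there is nothing to show. Otherwise, pick $j^\star:=F_i$, so that $C_{j^\star}\hookrightarrow A_i$. Since $A_i\hookrightarrow A_{i+1}$ by the $A$-chain assumption, transitivity of $\hookrightarrow$ yields $C_{j^\star}\hookrightarrow A_{i+1}$, giving $F_{i+1}\ge j^\star=F_i$.

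The main obstacle, and the only non-mechanical step, is uniqueness: showing that $F$ recovers the full ordering, i.e., that for any $i\in\llbracket 1,n\rrbracket$ and $j\in\llbracket 1,m\rrbracket$ one has $C_j\hookrightarrow A_i$ iff $j\le F_i$, and dually $A_i\hookrightarrow C_j$ iff $j>F_i$. The ``$\Rightarrow$'' direction of the first equivalence is just the definition of $F_i$ as a maximum. For ``$\Leftarrow$'', suppose $j\le F_i$. By definition there exists $j'\ge j$ with $C_{j'}\hookrightarrow A_i$, and by the $C$-chain $C_j\hookrightarrow C_{j'}$, so transitivity closes the argument. The dual statement then follows because the partial order $\hookrightarrow$ must be total on the pair $\{A_i,C_j\}$---each operator is evaluated exactly once per iteration, so in any admissible schedule either $C_j$ is evaluated before $A_i$ or $A_i$ is evaluated before $C_j$. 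Combining these two equivalences, the relative position of every $C_j$ with respect to every $A_i$ is fixed by $F$, and since the two chains themselves are already fixed by the relabeling convention, the full ordering $\hookrightarrow$ is uniquely reconstructed from $F$. The subtlety here is justifying totality on every $(A_i,C_j)$ pair, which rests on the observation that \eqref{eq:abstract_fs_block} computes each operator exactly once and thus induces a linear extension of $\hookrightarrow$ consistent with both chains.
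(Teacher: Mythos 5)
Your proof follows essentially the same route as the paper's: verify $F_1=0$ and $F_n=m$ from the endpoint constraints, derive $F_i\le F_{i+1}$ from transitivity along the $A$-chain, and then argue that $F$ pins down the ordering. The one point where you go further than the paper is in explicitly invoking totality of $\hookrightarrow$ on each pair $\{A_i,C_j\}$ to get the dual equivalence $A_i\hookrightarrow C_j \Leftrightarrow j>F_i$; the paper's proof glosses over this, phrasing uniqueness as an injectivity statement (an alternative nondecreasing $\tilde F$ with $\tilde F_i\neq F_i$ would place $A_i$ differently relative to $C_{\max(F_i,\tilde F_i)}$). Your justification of totality (``in any admissible schedule each pair is evaluated in some fixed order'') is a little quick, since $\hookrightarrow$ is introduced as a partial order and a priori encodes data dependencies rather than schedule positions; however, the paper's own proof of Proposition~\ref{prop:causal_iff} also relies silently on exactly this dichotomy ($C_j\hookrightarrow A_i$ for $j\le F_i$, $A_i\hookrightarrow C_j$ for $j>F_i$), so you have simply made explicit an assumption that the paper leaves implicit. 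Net effect: same approach, with your version slightly more detailed on the reconstruction direction and the paper's slightly more compact via the injectivity phrasing.
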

\begin{proof}
	By construction (see \eqref{eq:abstract_fs_block}), we note that $F_1 = 0$, and $F_n = m$ and since $A_i\hookrightarrow A_j$, $C_i\hookrightarrow C_j$ for all $i<j$, it follows from transitivity that $F_i\leq F_{i+1}$ for $i \in \llbracket 1,n-1\rrbracket$  since $A_i \hookrightarrow A_{i+1}$ which in turn is ordered before $C_j$ for each $j\in \llbracket F_{i}+1, m\rrbracket$. Any alternative $(m,n)$-nondecreasing vector, $\tilde{F}$, such that $F_{i}\neq \tilde{F_{i}}$ for some $i\in \llbracket2,n-1\rrbracket$ would have $A_i$ ordered differently with respect to $C_{\max(F_i,\tilde{F}_i)}$. 
\end{proof}

\section{Missing proofs}\label{app:missing_proofs}

\begin{proof}[Proof of Proposition~\ref{prop:causal_iff}]
	Assume that a minimal frugal splitting method parameterized by \eqref{eq:abstract_fs_block} is well-defined. By \cref{prop:Causal_F}, there exists a unique $(m, n)$-nondecreasing vector $F$. Since $A_i \hookrightarrow A_j$ and $C_i \hookrightarrow C_j$ for all $i < j$, $L$ and $Z^{(2)}$ must be strictly lower triangular. Indeed, if $L_{ij}$ is non-zero for $i\leq j$, then \eqref{eq:abstract_fs_block} yields the update $x^{+}_i = J_{\gamma_i A_i}(\gamma_i L_{ij}x^{+}_j+\dots),$ which  violates the ordering. Similarly, the ordering is violated for $u^{+}_i = C(Z^{(2)}_{ij}u^{+}_j+\dots)$ if $Z^{(2)}_{ij}\neq 0$ for $i\leq j$. This shows that $L$ and $Z^{(2)}$ are strictly lower triangular. Consider now the updates $x^{+}_i = J_{\gamma_iA_i}\left(H_{ij}C_j(\dots)+\dots\right)$ for $i\in \llbracket1, n\rrbracket$. As $A_i \hookrightarrow C_j$ for every $j\in \llbracket F_1+1, m\rrbracket$, the ordering is violated unless $H \in  \mathcal{S}(F)$. Consider instead the updates $u^{+}_i = C_i(K_{ij}x^{+}_j + \dots)$ for $i\in \llbracket 1, n\rrbracket$ and $j\in \llbracket 1, F_i\rrbracket$. As $A_i \hookrightarrow C_j$ for every $i\in \llbracket 1, n\rrbracket$ and $j\in \llbracket 1, F_i\rrbracket$, $K^T\in \mathcal{S}^c(F)$. $H$ and $K^T$ therefore constitute a causal pair of matrices. Assume, on the other hand, $L$ and $Z^{(2)}$ to be strictly lower triangular and $H\in \mathcal{S(F)}$ and $K^T\in \mathcal{S}^c(F)$ for $F$ given by \eqref{eq: Ordering_F}, given the ordering of \eqref{eq:abstract_fs_block}. Then $T$ is well-defined from the lower triangular structure of $L$, $Z^{(2)}$ and $H\bC K$.
\end{proof}

\begin{proof}[Proof of Lemma~\ref{lemma:NE_Primal_necessary}]
	Quasinonexpansiveness of the  fixed-point operator $T$, as defined in \eqref{eq:frugal_splitting_fpe_plus}, is equivalent to nonpositivity of \eqref{eq: PEP-1}. The objective function of the latter can be expressed as:
	\begin{equation*}
		\|\bz - \bz^\star - M^T(\bx - \bx^\star )\|^2 - \|\bz - \bz^\star \|^2,
	\end{equation*}
	where, e.g., $\bz$ and $\bx$ must satisfy, for some $A \in \cA_n$ and $C \in \cC_m$,
	\begin{equation*}
    (\Gamma^{-1} - L)\bx + \ba + H \bu = M \bz, \quad \bu = \bC(K \bx), \quad \text{and} \quad \ba \in \bA(\bx).
	\end{equation*}
	Similar considerations apply to $\bz^\star$ and $\bx^\star$. Via tight interpolating conditions, see \cite{RyuOperatorSplittingPEP}, these constraints can be written as in \eqref{eq: PEP-2}. Specifically, the first constraint imposes the existence of a tuple of maximal monotone operators $\ba := -(\Gamma^{-1} - L)\bx - H \bu + M \bz \in \bA(\bx)$, while the second imposes the existence of a tuple of $\frac{1}{\beta_i}$-cocoercive operators $C_1, \dots, C_m$ such that $\bu =\bC(K\bx)$. 
\end{proof}

\begin{proof}[Proof of Lemma~\ref{lemma:NE_Dual}]
	If the objective value of \eqref{eq: PEP-2} is non-positive for $(\bz^0-\bz^\star, \bx^0-\bx^\star, \bu^0-\bu^\star)\in \mathcal{H}^{n-1}\times \mathcal{H}^{n}\times \mathcal{H}^m$, then so is the objective value of \eqref{eq: SDP-PEP} for $G\in \mathbb{S}_+^{2n+m-1}$ defined as
	\begin{equation}
		G_{ij} = \langle (\bz^0-\bz^\star, \bx^0-\bx^\star, \bu^0-\bu^\star)_i, (\bz^0-\bz^\star, \bx^0-\bx^\star, \bu^0-\bu^\star)_j\rangle,
	\end{equation}
	as \eqref{eq: SDP-PEP} is simply an SDP-relaxation of \eqref{eq: PEP-2}. If instead \eqref{eq: SDP-PEP} has a positive objective value for $G\in \mathbb{S}_+^{2n+m-1}$ then for the Cholesky factorization of $G = LL^T$, let $(\bz^0-\bz^\star, \bx^0-\bx^\star, \bu^0-\bu^\star)_i := \sum_{j=1}^{2n+m-1}L_{ij}e_j$ for $i\in \llbracket 1, 2n+m-1\rrbracket$, with $\{e_i\}_{i=1}^{2n+m-1}$ an orthonormal basis of some $(2n+m-1)$-dimensional subspace of $\cH$, gives a positive objective value of \eqref{eq: PEP-2}.
\end{proof}

\begin{proof}[Proof of Lemma~\ref{lemma:Slater}]
    Consider $T$, defined by \eqref{eq:frugal_splitting_fpe_plus},
    for solving the inclusion problem with operators
    $A_i = \epsilon I$ for $i\in \llbracket1, n\rrbracket$ and $C_j = \epsilon I$ for $j\in \llbracket 1, m\rrbracket$ with $0 \leq \epsilon < \min_{i\in \{1,2,\dots m\}}\beta_i$.
    
     \emph{Step 1}: The resolvent update of \eqref{eq:frugal_splitting_fpe_plus}, initiated at $\bz\in \real^{n-1}$, produces $\bx^+\in \real^{n}$ satisfying
	\begin{equation}
		\label{eq: Slater-equality}
		(\Gamma^{-1}-L + \epsilon I + \epsilon HK)\bx^+ = M\bz.
	\end{equation} As $\Ran(M) =\one^\perp$, we note for any $\bx^+\in \real^{n}$ that there exists $\bz\in \real^{n-1}$ for which $\bx^+$ and $\bz$ solve \eqref{eq: Slater-equality} if and only if 
	\begin{equation}
		\label{eq: Slater-range}
		\one^T\left(\Gamma^{-1}-L+\epsilon I +\epsilon HK\right)\bx^+ = 0.
	\end{equation}
    Thus for $\epsilon = 0$ and $\bx^+ = \one$ there exists some $\bz\in \real^{n-1}$ satisfying equation \eqref{eq: Slater-equality}. 
    
    \emph{Step 2}: Assume instead that $\epsilon$ is strictly positive. Then the corresponding inclusion problem has the unique solution $x^\star = 0$ with a corresponding fixed point $\bz^\star = 0$.
    Consider the constraints of \eqref{eq: PEP-1}, or the relaxation in \eqref{eq: SDP-PEP}. For algorithm points consistent with this choice of operators, since $(\bz^\star, \bx^\star, \bu^\star) = (0,0,0)$, the interpolation conditions simply state that 
	\begin{equation}
        \label{eq: Slater_interpolation_conditions}
			\langle x_i, \epsilon_i x_i\rangle \geq 0\quad \text{for all} \ i\in \llbracket 1, n \rrbracket, \quad \text{and} \quad
			\langle \epsilon x_i, K_i\bx\rangle -\frac{1}{\beta_i}\|\epsilon x_i\|^2 \geq 0\quad \text{for all} \ i\in \llbracket 1, m \rrbracket
	\end{equation}
	which hold strictly if and only if $x_i \neq 0$ for all $i\in \llbracket 1, n\rrbracket$ and $\langle K_j\bx, x_j\rangle >  \frac{\epsilon}{\beta_j}\|x_j\|^2$ for all $j\in \llbracket 1, m\rrbracket$.
	
    \emph{Step 3}: Lastly, consider the function
    \begin{equation*}
        f(\bx, \varepsilon) := \one^T\left(\Gamma^{-1}-L+\epsilon I +\epsilon HK\right)\bx
    \end{equation*}
    for which zeros solve \eqref{eq: Slater-range}.
    As $f(\one, 0) = 0$, and $f$ is continuously differentiable with
    \begin{equation*}
       \nabla_\epsilon f(\one, 0) = \one^T\left(I + HK\right)\one = 2n, \quad \text{and} \quad \nabla_xf(\one, 0) = \one^T\left(\Gamma^{-1}-L\right),
    \end{equation*}
    the implicit function theorem  that
    there must exist a continuously differentiable function $s: \bx \mapsto \epsilon$ in an open neighborhood of $\bar{\bx} := \one$, such that $\bx$ solves \eqref{eq: Slater-range}, for $\varepsilon = s(\bx^+)$, and $s$ has $$\nabla s(\one) = -\frac{1}{2n}(\Gamma^{-1}-L)^T.$$
    Since this gradient is not identically zero, we have by continuity, for $\delta\in \real_{++}$ sufficiently small, that the equation \eqref{eq: Slater-range} is solved by $\bx^+ := \one + \delta\nabla s(\bx^+) > 0$ arbitrarily close to $\one$, with $0 < \epsilon := s(\bx^+) < \min_{i\in \llbracket 1, m\rrbracket} \beta_i$.
    For $\bx^+$ sufficiently close to $\one$, also $K\bx^+ > 0$, as $K\one = \one$. For such $\bx^+$ and $\epsilon$, \eqref{eq: Slater_interpolation_conditions} hold strictly.    

    Since $\bx^+$ defined in Step 3 and $\epsilon := s(\bx^+)$ solve \eqref{eq: Slater-range}, there must exist $\bz\in \real^{n-1}$ such that $\bx^+$, $\epsilon$ and $\bz$ solve \eqref{eq: Slater-equality}.
    Let us construct $G$ as the Grammian matrix of the corresponding elements $(\bz, \bx^+ , \epsilon K\bx^+)$ according to above. As $G$ solves all interpolation conditions of $\eqref{eq: SDP-PEP}$ strictly, so will the positive definite matrix $G + \epsilon_2I$ for $\epsilon_2$ positive and sufficiently small. $G+\epsilon_2I$ is a Slater point of \eqref{eq: SDP-PEP} so we have strong duality by Slater's condition.
\end{proof}

\begin{proof}[Proof of Proposition~\ref{prop:convergence_algorithm_1}]
	We reformulate \eqref{eq:algorithm_1_nonpar} as an instance of a degenerate preconditioned proximal point algorithm (dPPP) in the sense of \cite{bcln22}. Set $\hat \theta :=2\theta \in (0,2)$ and consider the equivalent algorithm with $\hat M:=\frac{1}{\sqrt{2}}M$
\begin{equation}\label{eq:degenerate_PPP1}
		\left\{\begin{aligned}
			& \bx^{k+1}=(\Gamma^{-1}-L+\bA+H\bC K)^{-1}(\hat M \bz^k),\\
			&\bz^{k+1}= \bz^k + \hat \theta \hat M^T \bx^{k+1}.
        \end{aligned}\right.
	\end{equation}
Let $\cA_L:=\bA + H\bC K + \Gamma^{-1} - L- \hat \cL$, $\hat\cL := \hat M \hat M^T$, and
	\begin{equation}
		\cA:= \begin{bmatrix}
			\cA_L & -\hat M\\
			\hat M^T & 0 
		\end{bmatrix}, \quad \text{and} \quad
	\cM:= \begin{bmatrix}
		\hat \cL^T & \hat M\\
		\hat M^T & I
	\end{bmatrix}.
	\end{equation}
Given $\bu^0:=(\bx^0, \bv^0)\in \cH^{2n-1}$, the dPPP method iterates \begin{equation}\label{eq:degenerate_PPP2}
    \bu^{k+1} =\bu^k+\hat \theta (J_{\cM^{-1}\cA}( \bu^k)-\bu^k), \quad \text{for all} \ k \in \N.
\end{equation}
Using the substitution $\bz^{k}:= \begin{bmatrix}
    \hat M^T & I
\end{bmatrix}\bu^k$ it is straightforward to see that \eqref{eq:degenerate_PPP2} coincides exactly with \eqref{eq:degenerate_PPP1}. For any $\bx,\bx'\in \cH^n$ it holds
\[\begin{aligned}
    \langle (\Gamma^{-1}-L-\hat \cL + H\bC K) (\bx) - (\Gamma^{-1}-L-\hat \cL + H\bC K) (\bx'), \bx-\bx'\rangle & = \\
    & \hspace{-10cm} = \langle (\Gamma^{-1}-L-\frac{1}{2}M M^T) (\bx-\bx'), \bx-\bx'\rangle + \langle H\bC K \bx - H\bC K \bx', \bx-\bx'\rangle \\
    & \hspace{-10cm} \geq \langle (\Gamma^{-1}-L-\frac{1}{2}MM^T) (\bx-\bx'), \bx-\bx'\rangle+ \\ & \hspace{-8cm}+ \langle \bC K \bx - \bC K \bx', (H^T-K)(\bx-\bx')\rangle + \|\bC K\bx-\bC K\bx'\|_{\diag(\beta)^{-1}}^2,
\end{aligned}\]
which is positive by positive semidefiniteness of the matrix in \eqref{eq:def_Q}, i.e., condition \eqref{ass:NE}. This implies that $H\bC K + \Gamma^{-1}-L- \hat \cL$ is monotone. The operator is also continuous and with full domain. Hence, $\cA_L$ is maximal monotone and thus also $\cA$. Since Algorithm \ref{alg:simple_splitting_FPE_introduction} can be seen as an instance of dPPP, the convergence of $\{x_i^{k+1}\}_k$ and $\{\bz^k\}_k$ follows from \cite[Corollary 2.10]{bcln22}, and the rate on the variance follows from the fact that $\Var(\bx^{k+1})$ can be upper bounded by the square fixed-point residual as in \cite[Proposition 3.5.]{bcn24}, see also \cite[Lemma 5.2.1]{chenchene_thesis}.
\end{proof}

\end{appendix}

\end{document}